\newtheorem{teorema}{Theorem}[section]
\newtheorem*{theorem*}{Main Theorem}
\newtheorem{lemma}[teorema]{Lemma}
\newtheorem{propos}[teorema]{Proposition}
\newtheorem{corol}[teorema]{Corollary}
\theoremstyle{definition}
\newtheorem{ex}{Example}[section]
\newtheorem{rem}{Remark}[section]
\newtheorem{defin}[teorema]{Definition}
\newtheorem{assumption}[teorema]{Assumption}
\def\@tocline#1#2#3#4#5#6#7{\relax
  \ifnum #1>\c@tocdepth 
  \else
    \par \addpenalty\@secpenalty\addvspace{#2}%
    \begingroup \hyphenpenalty\@M
    \@ifempty{#4}{%
      \@tempdima\csname r@tocindent\number#1\endcsname\relax
    }{%
      \@tempdima#4\relax
    }%
    \parindent\z@ \leftskip#3\relax \advance\leftskip\@tempdima\relax
    \rightskip\@pnumwidth plus4em \parfillskip-\@pnumwidth
    #5\leavevmode\hskip-\@tempdima
      \ifcase #1
       \or\or \hskip 1em \or \hskip 2em \else \hskip 3em \fi%
      #6\nobreak\relax
    \dotfill\hbox to\@pnumwidth{\@tocpagenum{#7}}\par
    \nobreak
    \endgroup
  \fi}
\newcommand{\Zhe}{\mbox{\usefont{T2A}{\rmdefault}{m}{n}\CYRZH}}
\def\R{{\mathbb R}}
\def\C{{\mathbb C}}
\def\Z{{\mathbb Z}}
\def\CP{{\mathbb{CP}}}
\def\H{{\mathbb H}}
\def\cU{{\mathcal U}}
\def\cJ{{\mathcal J}}
\def\sfera{{\mathbb S}}
\DeclareMathOperator{\pv}{\wedge \mspace{-9.5mu}_\star \ }
\def\Re{{\sf Re}}
\renewcommand{\Im}{\mathsf{Im}}
\def\Aut{\operatorname{Aut}}
\newcommand{\vecpart}[1]{\underline{#1}}
\newcommand{\vecnorm}[1]{\underline{#1}^2}
\title{Slice regular functions as covering maps and global $\star$-roots}
\author[A. Altavilla]{Amedeo Altavilla}\address{Altavilla Amedeo:  Dipartimento di Matematica, Universit\`a degli Studi di Bari ``Aldo Moro'', via Edoardo Orabona, 4, 70125,
Bari, Italy.}\email{amedeo.altavilla@uniba.it}
\author[S. Mongodi]{Samuele Mongodi}\address{Mongodi Samuele: Università degli Studi di Milano-Bicocca, Dipartimento di Matematica e Applicazioni,via Roberto Cozzi, 55, 20125, Milano, Italy.}\email{samuele.mongodi@unimib.it}
\thanks{Partially supported by GNSAGA of INdAM and by the INdAM project ``Teoria delle funzioni ipercomplesse e applicazioni''.}
\subjclass[2010]{Primary 30G35, 30C25; secondary 58K10, 32A10}
\keywords{Slice-regular functions, $k$-th roots, covering maps, monodromy}
\begin{document}
\begin{abstract}
The aim of this paper is to prove that a large class of quaternionic slice regular functions result to be (ramified) covering maps. By means of the topological 
implications of this fact and by providing further 
topological structures, we are able to give suitable natural conditions for the existence of $k$-th
$\star$-roots of a slice regular function. Moreover, we are also able to compute all the solutions which, quite surprisingly, in the most general case, are in number of $k^2$. The last part is devoted to compute the monodromy and to present a technique to compute all the $k^2$ roots
starting from one of them. 
\end{abstract}

\maketitle
\tableofcontents

\section{Introduction}

The present work aims at studying slice regular functions of a quaternionic variable as covering maps and the existence and nature of global $k$-th $\star$-roots of slice functions.

\vspace{.5cm}

Geometric function theory is the study of geometric and topological properties of analytic function of a complex variable; one could very well say that one of the fundamental (although rather simple) results that originate and motivate such a study is the local nature of branched coverings of holomorphic functions of one variable.

This characteristic is even more striking when we move from open planar domains to Riemann surfaces: holomorphic maps between Riemann surfaces are locally branched coverings. In other terms, we can always find local coordinates such that a holomorphic map between Riemann surfaces is locally written as $z\mapsto z^k$, with $k\in\mathbb{N}$.

The global counterpart of these statements is obtained, for instance, in the case when the function has some finiteness properties: as an example, polynomial functions are branched coverings of $\C$ over $\C$, whose branching set is related to the zeros of their derivative. The monodromy around branching points can be quite complicated and it is related to the Galois group of the corresponding extension of rings of rational functions, as opposed to the rather simple behaviour of the local model $z\mapsto z^k$. However, if we are interested in the problem of lifting holomorphic functions via such coverings, even the local model presents interesting phenomena: if $p$ is a holomorphic polynomial, the Riemann surface $w^k=p(z)$ possesses a number of geometric features that are relevant in the general study of Riemann surfaces.

\vspace{.5cm}

This geometric description of the local behaviour of holomorphic maps in one dimension brings together several ingredients: Rouché theorem, winding numbers, logarithmic indicator, local invertibility, conformality, nature of zeroes.

In the setting of slice regular functions of a quaternionic variable, all these results are, to some degree, true, however the difficulties which are inherent in the quaternionic theory prevent us from merging them in a comprehensive geometric description of slice regular functions as mappings from $\H$ to $\H$.

We are referring to the absence of the usual (quaternionic) product and composition operations, which do not leave the set of slice regular functions invariant. 

The case of the product is quite representative of the challenges posed by the quaternionic setting: given two slice regular functions $f,g:\H\to\H$ their product $q\mapsto f(q)g(q)$ (where, on the right, we consider the product operation in the algebra of the quaternions) is not a slice regular function; this problem is overcome by considering a suitable product, called $\star$-product (see Definition \ref{def_starprod}). However, the value of $f\star g$ at a quaternion $q$ is not, apart from some particular cases, the product of the values of $f$ and $g$ at $q$, nor can be obtained from these two values alone.

This results in a cumbersome way of dealing with powers and exponentials.

\vspace{.5cm}

Quite recently, a number of papers addressed the problem of finding the analogues of a logarithm or a $k$-th root for slice regular functions, see \cites{AltavillaPAMS,altavillaLOG,GPV,GPV2}.

Our purpose, in the present work, is to study the existence and nature of th $k$-th $\star$-roots of a slice regular function, i.e. the solutions of $g^{\star k}=f$, from the point of view of covering maps, thus obtaining global results and allowing a study of the monodromy of the solution of the functional equation $g^{\star k}=f$.

Clearly, many results on $k$-th $\star$-roots could be derived out of those obtained for the $\star-\log$. For instance, using natural ideas, it should be possible to prove many results on the existence of a $\star$-root of a function, starting from the possible existence of its $\star$-logarithm.
However, the main difference with other previous attempts at this task lays in the techniques we employ, which stem from merging our two different, but related, interpretations of slice regularity, developed in our respective previous works in the field.

On the one hand, any slice regular function can be interpreted, via a suitable complex analytic representation, as a holomorphic curve in $\C^4\equiv \C\otimes\H$, so we can apply all the techniques of classical complex analysis and relate geometric properties of such
a curve to characteristic of the slice regular function (see ~\cites{Mongodi:HolSR,Mongodi:AssAlg}). In this setting, the $\star$-product emerges naturally as the operation induced on such curves by the algebra product of $\C\otimes\H$.

On the other hand, the space of slice regular functions can be given the structure of a rank $4$ module over the set of multiplicative commutators, i.e. the set of slice preserving functions $\mathcal{S}_\R$ (see~\cites{AltavillaAMPA,AltavillaLAA}); this construction depends on the choice of a basis $(1,\ I,\ J,\ K)$ of $\H$ as a real vector space, so that, given $f_0,\ f_1,\ f_2,\ f_3$ slice preserving, the map
$$(f_0,\ f_1,\ f_2,\ f_3)\mapsto f_0+f_1I+f_2J+f_3K$$
is a bijection. In this case, the $\star$-product is recovered by observing that the slice preserving functions are such that $f\star g=g\star f=fg=gf$ and this gives us a way to extend the product in $\H$ to $(\mathcal{S}_\R)^4$.

\vspace{.5cm}

As different in spirit as they seem, these two viewpoints are, in fact, two sides of the same coin; while the complex analytic approach is useful in giving clear and general proofs based on known techniques in complex analysis and geometry, the algebraic approach closely relates the peculiar characteristics of quaternions to the properties of slice regular functions, particularly when the operations of the algebra structure are involved, making it easier to understand the computational side and allowing to produce several explicit examples. 

They are therefore both useful in separating those behaviours which come seamlessly out of the theory of one complex variable from the phenomena that are properly caused by the unique properties of $\H$, thus revealing the true r\^ole of the quaternions.

We believe that the combination of these two approaches could be useful in dealing with other similar problems and, particularly, in exploring the geometric implications of such results, venturing in the scarcely explored realm of quaternionic Riemann surfaces.

\vspace{.7cm}

The content is organized as follows.

\vspace{.5cm}

Section \ref{secpreliminaries} contains a review of the basic material needed for our purposes, following the notations and strategies introduced inn~\cites{GP:AltAlg,Mongodi:HolSR}. In particular, we introduce the formalism of stem functions and of slice preserving functions and their relations with the $\star$-product.

In the last part of the section, namely in Subsection~\ref{twoapproaches}, 
we recall the general ideas leading the authors to
develop the two aforementioned interpretations
of slice regularity, emphasizing how these are linked to each other and showing part of their potential.

\vspace{.5cm}

In Section~\ref{coversect} we show that, under
suitable natural hypotheses, a slice regular function that is also a finite map is, in fact, a covering map (see Theorem~\ref{teoremacover}). 

To obtain this result we present a couple of technical lemmas characterizing geometrically, in terms of tangent vectors, the non-invertibility of
the real differential of a slice regular function (see Lemma~\ref{lmm_tang} and~\ref{lmm_zeroD}). Of course, part of the content of these results was already known in the literature (maybe with different notations), but the proofs we present
here are new (especially in giving geometric insights on the involved objects), and the techniques will be
exploited in the mentioned main result. 

As a consequence, we obtain a new proof (based on covering properties of slice regular functions), of the fact that an injective slice regular function has real differential that is invertible in the whole domain (see~\cites{GSS:Twistor,Altavilla:RealDiff,Ghiloni2020}). To offer the right perspective
on Theorem~\ref{teoremacover}, we underline that 
from the content of~\cites{Ghiloni2020},
it can be already inferred that slice regular functions are \textit{locally} covering maps, while
our new techniques give a \textit{global} result.

\vspace{.5cm}

The rest of the paper is devoted to study $k$-th
$\star$-roots. 

We begin, in Section~\ref{secpower},
with the more abstract case arising from the complex analytic interpretation of slice
regularity. In particular, having transferred the notion of $\star$-product to curves in $\C^4$, 
we coherently define the analog of the $k$-th $\star$-power, denote by $\sigma_k$. In
Lemma~\ref{lmm_jacobianpower} we give an explicit
expression of the differential of $\sigma_k$,
with a complete description of its zero set. 

This
is related to a result given in the last section of~\cites{AltavillaAMPA}, where the authors give
algebraic conditions in order to have that the $k$-th $\star$-power of a slice regular function
is slice preserving.

Passing to a more abstract interpretation, we are then able to prove that, under suitable hypotheses, $\sigma_k$ is generically $k^2$-to-$1$ and it 
is in fact a covering map (see Proposition~\ref{prp_preimages} and Theorem~\ref{teo_powercover}). In the last
part of this section we show how the hypotheses
of the previously mentioned results are read in the language of slice regular functions.

\vspace{.5cm}

Section~\ref{starrootsec} contains the main outcomes of this paper
as mentioned in the abstract. On the basis of the results of the previous section, we are able to prove that, under suitable natural hypotheses, any slice regular function defined on a domain without real points admits $k^2$ $k$-th $\star$-roots (see Theorem~\ref{casosenzareali}).

This result was quite unexpected but is not so surprising once one realizes that the space of
quaternions minus the real line $\H\setminus\R$ is
biholomorphic to a half complex plane times the Riemann sphere $\C^+\times\mathbb{CP}^1$. In a certain sense, these two basic complex spaces produces two independent monodromies, each one counting $k$ sheets. If we impose the condition
for the domain to intersect the real axis, then many of these solutions are not anymore well defined and the amount of survivors is the more
expected number $k$ (see Theorem~\ref{teo_kroots}).

In the last part of this section we provide two explicit examples showing, in a heuristic way, the different behaviours between the case $k$ odd and $k$ even. In both examples, a key role is played by
the so-called \textit{slice polynomial functions} introduced in~\cites{AltavillaMATHZ}.

\vspace{.5cm}

The last section is devoted to compute the monodromy in detail, explaining, in particular, the $k^2$ factor discussed before. We find that,
for a fixed $k$, it is possible to define two
different actions of the set of $k$-th rooths of unity in $\C$ on the set of $k$-th $\star$-roots
of a given slice regular function. In terms of the algebraic representation of slice regular functions, these two actions are related to two different representations of complex numbers (as a field extension of $\R$ and as a subset of $2\times 2$ real matrices).

We explore in
detail these two actions, highlighting their different nature in the two cases, $k$ odd and $k$ even; this allows us to obtain a general existence result for $k$-th roots of slice functions (see Theorems \ref{teo_genrootsodd} and \ref{teo_genrootseven}).

We notice that this \textit{double} monodromy, producing the $k^2$ factor, is coherent with the results
obtained for the logarithm, where, under consistent natural hypotheses, a given slice regular function has $\infty^2$ $\star$-logarithms (see~\cite[Theorem 1.2]{altavillaLOG}).

\medskip
We warmly thank the anonymous referees for their useful comments which helped to improve the presentation of
our results. 
\section{Preliminaries on quaternionic slice regular functions}\label{secpreliminaries}
We refer to~\cites{GSS:RegFunc} for a general introduction to the slice regularity.

Let $\H$ be the algebra of quaternions and $(1,i,j,k)$ its standard basis satisfying usual multiplicative rules. Then, any quaternion $q\in\H$ can be written as 
$q=q_{0}+q_{1}i+q_{2}j+q_{3}k$, with $q_{0},q_{1},q_{2},q_{3}\in\R$. We endow $\H$ with
the standard involution $\H\ni q=q_{0}+q_{1}i+q_{2}j+q_{3}k\mapsto q^{c}=q_{0}-(q_{1}i+q_{2}j+q_{3}k)\in\H$. The euclidean norm of $q$ can therefore be computed as $||q||=\sqrt{qq^{c}}$.
Moreover, thanks to such a conjugation, it is possible to define the scalar and vector part
of any $q\in\H$ as
$$
q_{0}=\frac{q+q^{c}}{2}\;,\qquad q_{v}=\frac{q-q^{c}}{2}\;,
$$
respectively. 
Therefore, any $q\in\H$ can be also written as $q=q_{0}+q_{v}$. 
Then, we will identify $\R\subset\H$ with the set $\{q\in\H\ :\ q_v=0\}$ and $\R^3$ with the set of purely imaginary quaternions $\{q\in\H\ :\ q_0=0\}$.
With this 
representation, the product of two quaternions $q=q_{0}+q_{v}$ and $p=p_{0}+p_{v}$ can be written as
$$
qp=q_{0}p_{0}-\langle q_{v},p_{v}\rangle+q_{0}p_{v}+p_{0}q_{v}+q_{v}\wedge p_{v}\;,
$$
where $\langle\cdot,\cdot\cdot\rangle$ and $\wedge$ denote the standard euclidean and
cross product in $\R^{3}$.

The set of imaginary units in $\H$ is diffeomorphic to a $2$-sphere $S^{2}$ and
will be denoted as follows
$$\sfera=\{I\in\H\ :\ I^2=-1\}=\{q\in\H\ :\ q_{0}=0,\ ||q_{v}||=1\}\;.$$
With this notation, any $q=q_{0}+q_{v}\in\H\setminus\R$ can be written as
$q=\alpha+I\beta$, where $\alpha=q_{0}$, $\beta=||q_{v}||$ and $I=q_{v}/||q_{v}||\in\sfera$
and hence $\H=\cup_{I\in\sfera}\C_{I}$, where $\C_{I}=Span_\R(1,I)$.
Clearly, for any $\alpha,\beta\in\R$ and any $I\in\sfera$, $\alpha+I\beta$ is a well defined quaternion. Therefore, we can define the map $\pi:\C\times\sfera\to\H$ by
\begin{equation}\label{pi}
\pi(z,I)=\Re(z)+I\Im(z)\;.
\end{equation}
We will denote the imaginary unit in $\C$ by $\imath$.
Notice that, for any $\alpha+\imath\beta\in\C$, $\pi(\{z\}\times\sfera)=\{\alpha+I\beta\ :\ I\in\sfera\}\simeq S^2$.

In view of the introduction of slice regularity, we now set up some material on the real tensor product $\C\otimes\H$. The imaginary unit
of this complexification will be denoted by $\sqrt{-1}$.
We will write the elements in $\C\otimes\H$ both as $z\otimes q$, with $z\in\C$ and $q\in\H$, or as $v_0+\sqrt{-1}v_1$, with $v_0,v_1\in\H$.
We extend the previously defined quaternionic conjugation in the following way: if $v_0+\sqrt{-1}v_1\in\C\otimes\H$, then $(v_0+\sqrt{-1}v_1)^{c}:=v_0^{c}+\sqrt{-1}v_1^{c}$.
Given $v=v_{0}+\sqrt{-1}v_{1}, w=w_{0}+\sqrt{-1}w_{1}\in\C\otimes\H$ their product is defined as
$$
vw=v_{0}w_{0}-v_{1}w_{1}+\sqrt{-1}(v_{0}w_{1}+v_{1}w_{0})\;.
$$

The map defined in Formula~\eqref{pi} induces a new map (denoted with the same symbol) $\pi:\C\otimes \H \times\sfera\to \H$ by requiring the linearity in the first component and that
\begin{equation}\label{pi2}
    \pi(z\otimes q, I)=\pi(z,I)q\;.
\end{equation}
In other words, if $w=w_0+\sqrt{-1}w_1\in\mathbb{C}\otimes\mathbb{H}$, with $w_1,w_2\in\mathbb{H}$, and $I\in\sfera$, then $$\pi(w,I)=w_0+Iw_1\;.$$
We recall that the complex structure defined by left multiplication by $\sqrt{-1}$ gives a structure of complex affine space to $\C\otimes\H$ for which it results
to be biholomorphic to $\C^4$. Having chosen the standard basis $(1,i,j,k)$, if $p=p_0+p_1i+p_2j+p_3k, q=q_0+q_1i+q_2j+q_3k\in\H$, then such a biholomorphism $\phi:\C\otimes\H\to\C^4$
easily reads as follows 
\begin{equation}\label{phimap}
    \phi(p+\sqrt{-1}q)\mapsto (p_0+\imath q_0,p_1+\imath q_1,p_2+\imath q_2,p_3+\imath q_3)\;.
\end{equation}

The complex conjugation extends to a linear map, still denoted in the same way, such that $\overline{z\otimes q}=\overline{z}\otimes q$,
or, if $w=w_0+\sqrt{-1}w_1$, then $\overline{w}=w_0-\sqrt{-1}w_1$. For any $w\in\C\otimes\H$, we have that
$(\overline{w})^{c}=\overline{(w^{c})}$.

We have now all the tools needed to introduce slice regularity with the approach of Ghiloni and Perotti \cite{GP:AltAlg}.

\begin{defin}
Let $\cU\subset\C$ be an open domain such that $\overline{\cU}=\cU$ and $U=\pi(\cU\times\sfera)$. 
A function $F:\cU\to \C\otimes\H$ such that $F(\overline{z})=\overline{F(z)}$ is said
to be a \emph{stem function}.
A function $f:U\to\H$ is called \emph{slice} if there exists a stem function $F:\cU\to\C\otimes \H$ such that $f(\pi(z,I))=\pi(F(z),I)$. We will write $f=\mathcal{I}(F)$.
Moreover, if $F$ is a holomorphic function, then $f$ is said to be \emph{slice regular}.
\end{defin}

For the convenience of what follows we pose here the following assumption setting up
some notation.

\begin{assumption}\label{assumption1}
In what follows $\cU\subset\C$ will be an open domain and $U=\pi(\cU\times\sfera)$. Moreover, we will fix a slice function $f=\mathcal{I}(F):U\to\H$ and we will write $F=1\otimes F_0+\sqrt{-1}\otimes F_1$ where $F_0,\ F_1:\cU\to \H$. In other words if $\alpha+\imath \beta\in\cU$, $F=F_0+\sqrt{-1}F_1$ and $f=\mathcal{I}(F)$, then
$f(\alpha+I\beta)=F_0(\alpha+\imath\beta)+IF_1(\alpha+\imath\beta)$.
\end{assumption}

As the pointwise product of two slice functions does not preserve sliceness, a different, yet natural, product may be introduced.

\begin{defin}\label{def_starprod}
Let $f=\mathcal{I}(F), g=\mathcal{I}(G)$ be two slice functions defined on $U$. We define
their \emph{slice product} or $\star$-\emph{product} as 
$$
f\star g:=\mathcal{I}(FG):U\to\H\;.
$$
\end{defin}
Hence if $F=F_{0}+\sqrt{-1}F_{1}$ and $G=G_{0}+\sqrt{-1}G_{1}$, then $f\star g$
is the slice function induced by $F_{0}G_{0}-F_{1}G_{1}+\sqrt{-1}(F_{0}G_{1}+F_{1}G_{0})$.

A special subset of slice regular functions is given in the following definition.

\begin{defin}
A slice function $f=\mathcal{I}(F):U\to\H$ is said to be \emph{slice preserving} if, 
for any $I\in\sfera$, $f(U\cap \C_I)\subset\C_{I}$. Equivalently, this happens if and only if 
$F$ takes values in $\C\otimes\R\subset\C\otimes\H$.
\end{defin}
A slice preserving function $f=\mathcal{I}(F_{0}+\sqrt{-1}F_{1})$ is therefore a slice function
such that $F_{0}$ and $F_{1}$ are real valued. However, this does not
imply that $f$ is real valued.
Notice that if $f$ is a slice preserving function and $g$ is any slice function, then
$f\star g=g\star f=fg$.

We conclude this section by defining the slice conjugate and symmetrized of a slice function.

\begin{defin}
Let $f=\mathcal{I}(F_{0}+\sqrt{-1}F_{1}):U\to\H$ be a slice function. We define its \emph{slice conjugate} as the slice function $f^{c}:=\mathcal{I}(F^{c}):U\to \H$ and
its \emph{symmetrized function} or \emph{symmetrization} as $f^{\mathsf{s}}=f\star f^{c}=\mathcal{I}(FF^{c})$.
\end{defin}
The symmetrized function of any slice function is a slice preserving function. Moreover, if $f$ is slice preserving, then $f^{c}=f$ and so $f^{\mathsf{s}}=f^{\star 2}=f^{2}$.

From the general theory~\cite{GSS:RegFunc,GP:AltAlg}, we know that if $f(q)=0$, then
$f^{\mathsf{s}}(q)=0$. In particular, if $q=\alpha+I\beta\in\H\setminus\R$, then $f^{\mathsf{s}}(\alpha+J\beta)=0$, for any $J\in\sfera$. Moreover, if $f^{\mathsf{s}}(\alpha+I\beta)=0$, then there exists $J\in\sfera$, such that $f(\alpha+J\beta)=0$.

\subsection{Two slightly different approaches}\label{twoapproaches}

The introduction of slice regularity by means of $\C\otimes\H$ and all the structures presented
above allows us to exploit complex analysis and geometry in order to obtain our results.

First of all the identification between $\C\otimes\H$ and $\C^4$ was already noticed in~\cite[Remark 3(2)]{GP:AltAlg} and used in~\cite[Theorem 3.4]{AltavillaCVEE} to prove
a result about a particular family of slice regular functions. Later, this approach was widely exploited in~\cites{Mongodi:HolSR,Mongodi:AssAlg} to produce alternative 
proofs of known and new results on slice regularity, highlighting their holomorphic nature. 

On the other hand, such an approach is an underlying element of a series of paper by
the first author and de Fabritiis~\cite{AltavillaAMPA,AltavillaPAMS,AltavillaLAA,altavillaLOG}.
In these papers many algebraic properties of slice regular functions are proved as well
as existence and uniqueness results for the quaternionic exponential and logarithm are
given. The basic result and idea behind these researches is a result due to Colombo, Gonzalez-Cervantes and Sabadini~\cite[Proposition 3.12]{CGCS} and Ghiloni, Moretti and Perotti~\cite[Lemma 6.11]{GMP} stating that any slice regular function $f:U\to\H$ can be 
uniquely written as a sum
$$
f=f_{0}+f_{1}i+f_{2}j+f_{3}k,
$$
where $f_{0},f_{1},f_{2},f_{3}$ are slice preserving regular functions defined on the same domain. Thanks to this equality, it is possible to define the ``real'' and ``vector'' part of $f=f_{0}+f_{v}$ as
$$
f_{0}=\frac{f+f^{c}}{2}\;,\qquad f_{v}=\frac{f-f^{c}}{2}\;,
$$
and, with these observation, it is possible to represent the $\star$-product of two slice regular functions
$f=f_{0}+f_{v}, g=g_{0}+g_{v}$ as
$$
f\star g=f_{0}g_{0}-\langle f_{v},g_{v}\rangle_{\star}+f_{0}g_{v}+g_{0}f_{v}+f_{v}\pv g_{v}\;,
$$
where $\langle\cdot,\cdot\cdot\rangle_{\star}$ and $\pv$ are algebraic operators acting, formally, as the usual euclidean and cross product (see~\cite{AltavillaAMPA}).
In particular, given $f=f_{0}+f_{1}i+f_{2}j+f_{3}k$, we have that $f^{c}=f_{0}-(f_{1}i+f_{2}j+f_{3}k)=f_0-f_v$, and $f^{\mathsf{s}}=f_{0}^{2}+f_{1}^{2}+f_{2}^{2}+f_{3}^{2}$.

It is clear that this interpretation of slice regular functions is suggested and underlined by
the previously described complex one. However, if the complex approach
will be effective to produce general proofs, the second one will be exploited to produce
and describe explicit examples.

\section{Differential  and covering properties of slice regular functions}\label{coversect}

In this section we recover some known fact about the real differential of slice regular function by looking it at the level of $\C\otimes \H$. This
approach is not only interesting by itself but will be exploited later in this section when we will discuss about covering properties.

With a slightly different language and notation, in \cite{Mongodi:HolSR} the second author  defined, for $q\in\H$, the sets
\begin{align*}
    Z_q&=\{w\in \C\otimes \H\ :\ \pi(w,I)=q\textrm{ for some }I\in\sfera\}\;,\\
\mathfrak{Z}_q&=\{(w,I)\in\C\otimes\H\times\sfera\ :\ \pi(w,I)=q\}\;,
\end{align*}
where $\pi$ is the map defined in Formula~\eqref{pi2}.

\begin{rem}\label{rem1}
We note that, if $w=w_0+\sqrt{-1}w_1\in\C\otimes\H$ is a real point in the complexification of $\H$, i.e. if $w_1=0$, then
$w\in Z_q$ if and only if $w=q$.
\end{rem}

As in \cite{Mongodi:AssAlg}, we also consider the map $\mathscr{F}:\cU\times\sfera\to\C\otimes \H\times\sfera$ given by $\mathscr{F}(z,I)=(F(z),I)$, obtaining the following equality $f\circ\pi=\pi\circ\mathscr{F}$ and commutative diagram.

$$
\begindc{\commdiag}[3]
\obj(0,180)[A]{$\cU\times\sfera$}
\obj(280,180)[B]{$\C\otimes \H\times\sfera$}
\obj(0,0)[C]{$U$}
\obj(280,0)[D]{$\H$}
\mor{A}{B}{$\mathscr{F}$}[\atleft,\solidarrow]
\mor{A}{C}{$\pi$}[\atright,\solidarrow]
\mor{C}{D}{$f$}[\atright,\solidarrow]
\mor{B}{D}{$\pi$}[\atleft,\solidarrow]
\enddc$$

Moreover, for any $(w,I)\in \C\otimes\H \times\sfera$ we have that $\ker D\pi_{(w,I)}=T_{\mathscr{F}(z,I)}\mathfrak{Z}_{f(q)}$.
It is known~\cite{Mongodi:HolSR} that $Z_q$ is a complex quadratic cone in $\C\otimes \H$, with one singular point (the vertex), which is the only point with real coordinates, and that $\mathfrak{Z}_q$ is a complex submanifold of $\C\otimes\H\times\sfera$, once $\sfera$ is given the appropriate complex structure (which is actually the natural one on $\CP^1$). As $q$ varies in $\H$, the manifolds $\mathfrak{Z}_q$ give a foliation of $\C\otimes\H\times\sfera$.

\begin{rem}\label{rem_Clinear}
Notice that the definition of slice regularity can be interpreted in terms of pseudoholomorphic curves as follows. Let $f:U\to\mathbb{H}$ be a slice regular function and $p=\alpha+I\beta\in U\setminus\mathbb{R}$. Locally, the tangent space $T_pU$ can be split into a sum $T_pU=T_{\alpha+\imath\beta}\mathbb{C}\oplus T_I\mathbb{S}$. Hence, if $v\in T_{\alpha+\imath\beta}\mathbb{C}$, then
$df(\imath v)=L_I(df(v))$, where $L_I$ denotes the left multiplication by $I\in\sfera$.
\end{rem}

In the following lemma we give a geometric interpretation for the differential of a
slice regular function to be singular.

\begin{lemma}\label{lmm_tang}Suppose that $q=\pi(z,I)\in U$ is such that $\Im(z)\neq 0$,  $F_1(z)\neq 0$ and that the (real) differential $Df_q$ is singular. Then $F(\cU)$ is tangent to $Z_{f(q)}$ in $F(z)$.\end{lemma}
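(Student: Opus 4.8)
The plan is to read off the degeneration of $Df_q$ from the commutative diagram $f\circ\pi=\pi\circ\mathscr{F}$ and then to transport the resulting kernel vector down to the cone $Z_{f(q)}$ via the projection $\mathrm{pr}_1\colon\C\otimes\H\times\sfera\to\C\otimes\H$ onto the first factor. First I would differentiate $f\circ\pi=\pi\circ\mathscr{F}$ at $(z,I)$, obtaining $Df_q\circ D\pi_{(z,I)}=D\pi_{(F(z),I)}\circ D\mathscr{F}_{(z,I)}$. Since $\Im(z)\neq 0$, the map $\pi$ is a local diffeomorphism at $(z,I)$: a direct check shows that $D\pi_{(z,I)}(a,b,\eta)=a+bI+\Im(z)\,\eta$ is injective on $T_z\cU\oplus T_I\sfera$ precisely when $\Im(z)\neq 0$ (the real part forces $a=0$, and the orthogonal splitting $\eta\perp I$ forces $b=0$ and $\eta=0$). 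Hence $D\pi_{(z,I)}$ is invertible and $Df_q$ is singular if and only if the composition on the right-hand side has a nontrivial kernel.

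Next I would extract a kernel vector. Singularity of $Df_q$ yields a nonzero $v=(v_{\C},v_{\sfera})\in T_z\cU\oplus T_I\sfera$ with $D\mathscr{F}_{(z,I)}(v)=(F'(z)v_{\C},\,v_{\sfera})\in\ker D\pi_{(F(z),I)}=T_{(F(z),I)}\mathfrak{Z}_{f(q)}$, the last equality being the one recalled just before the statement (here we identify $T_z\cU\cong\C$, so that $DF_z(v_{\C})=F'(z)v_{\C}$). Writing $D\pi_{(F(z),I)}(\xi,\eta)=\xi_0+I\xi_1+\eta\,F_1(z)$ for $\xi=\xi_0+\sqrt{-1}\xi_1$, the kernel condition becomes $\xi_0+I\xi_1+v_{\sfera}F_1(z)=0$ with $\xi=F'(z)v_{\C}$. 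This is exactly where the hypothesis $F_1(z)\neq 0$ enters: if $v_{\C}=0$ then $\xi=0$ and the condition degenerates to $v_{\sfera}F_1(z)=0$; since $\H$ is a division algebra and $F_1(z)\neq 0$, this forces $v_{\sfera}=0$, contradicting $v\neq 0$. Therefore $v_{\C}\neq 0$.

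Finally I would project. Because $Z_{f(q)}=\mathrm{pr}_1(\mathfrak{Z}_{f(q)})$ and $F(z)$ is a smooth point of $Z_{f(q)}$ (it is not the vertex, as $F_1(z)\neq 0$ means $F(z)$ is not a real point of the complexification by Remark~\ref{rem1}), the differential of $\mathrm{pr}_1$ carries $T_{(F(z),I)}\mathfrak{Z}_{f(q)}$ into $T_{F(z)}Z_{f(q)}$; applying it to the kernel vector gives $F'(z)v_{\C}\in T_{F(z)}Z_{f(q)}$. Assuming $F'(z)\neq 0$ (otherwise $F(\cU)$ has no well-defined tangent at $F(z)$ and the statement is vacuous), the vector $F'(z)v_{\C}$ is a nonzero complex multiple of $F'(z)$ and hence spans $T_{F(z)}F(\cU)=\C\,F'(z)$. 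Since $T_{F(z)}Z_{f(q)}$ is a complex subspace containing this nonzero vector, it contains the whole complex line $\C\,F'(z)=T_{F(z)}F(\cU)$, which is precisely the asserted tangency.

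I expect the main obstacle to be the bookkeeping in the last paragraph: one must check that $\mathrm{pr}_1$ genuinely sends $T\mathfrak{Z}_{f(q)}$ into $TZ_{f(q)}$ (i.e. that $Z_{f(q)}$ is smooth at $F(z)$ and that $\mathrm{pr}_1|_{\mathfrak{Z}_{f(q)}}$ parametrizes the cone near that point), and then to exploit the complex-linearity of both tangent spaces to upgrade the single shared vector $F'(z)v_{\C}$ into containment of the entire tangent line. A more computational alternative, which avoids the projection, is to realize $Z_{f(q)}$ as the quadric $\{w:\ b(f(q)-w,\,f(q)-w)=0\}$, where $b$ is the $\C$-bilinear extension of the Euclidean scalar product of $\H$, and to verify directly that the singularity condition is equivalent to $b(f(q)-F(z),\,F'(z))=0$, i.e. to $F'(z)\in\ker dQ_{F(z)}=T_{F(z)}Z_{f(q)}$; this identity can serve as a useful cross-check of the geometric argument.
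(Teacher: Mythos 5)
Your proposal is correct and follows essentially the same route as the paper's proof: differentiate $f\circ\pi=\pi\circ\mathscr{F}$ using that $\pi$ is a diffeomorphism off the real axis, identify $\ker D\pi_{\mathscr{F}(z,I)}$ with $T_{\mathscr{F}(z,I)}\mathfrak{Z}_{f(q)}$, push the kernel vector down via the first-factor projection into $T_{F(z)}Z_{f(q)}$, and use complex linearity to promote one shared vector to tangency of the whole line. The only divergence is bookkeeping: where the paper invokes the biholomorphism $p_1\colon\mathfrak{Z}^*_{f(q)}\to Z^*_{f(q)}$ to guarantee the projected vector is nonzero, you argue $v_{\C}\neq 0$ directly from the explicit form of $D\pi$ and the division-algebra property and then set aside the case $F'(z)=0$ --- a degenerate case which the paper's own proof glosses over in exactly the same way.
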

\begin{proof}
If we set $\cU^+=\{z\in\cU\ :\ \Im(z)>0\}$, then the restriction of $\pi$ to $\cU^+\times\sfera$ is a diffeomorphism between the latter and $U\setminus\R$. Hence, $Df_q$ is singular if and only if $D(\pi\circ\mathscr{F})_{(z,I)}$ is singular, i.e. $Df_q[v]=0$ for some $v\in T_{q}U$ if and only if there exists $v'\in T_{(z,I)}(\cU\times\sfera)$ such that
$$D\pi_{\mathscr{F}(z,I)}\circ D\mathscr{F}_{(z,I)}[v']=0\;.$$
This happens if and only if $D\mathscr{F}_{(z,I)}[v']\in\ker D\pi_{\mathscr{F}(z,I)}$, i.e. by definition if and only if $D\mathscr{F}_{(z,I)}[v']\in T_{\mathscr{F}(z,I)}\mathfrak{Z}_{f(q)}$.

We now show that the former analysis implies that $F(\cU)$ is tangent to $Z_{f(q)}$.
Consider the projection map $p_1:\C\otimes\H\times\sfera\to\C\otimes\H$, so that $p_1\circ\mathscr{F}(z,I)=F(z)$, and let $Z_{f(q)}^*\subset\C\otimes\H$ denote the set $Z_{f(q)}$ minus its real points, i.e. $Z_{f(q)}^*=Z_{f(q)}\setminus\{f(q)\}$ (see Remark~\ref{rem1}). The map $p_1$ induces a biholomorphism between  $Z^*_{f(q)}$ and $\mathfrak{Z}^*_{f(q)}=p_1^{-1}(Z^*_{f(q)})$ (see the proof of \cite{Mongodi:HolSR}*{Theorem 3.3}). Therefore, as $F_1(z)\neq 0$,
$$0\neq Dp_1(D\mathscr{F}_{(z,I)}[v'])\in T_{p_1(\mathscr{F}(z,I))}Z_{f(q)}=T_{F(z)}Z_{f(q)}$$
so, as $p_1\circ\mathscr{F}(z,I)=F(z)$, $D(p_1\circ\mathscr{F})_{(z,I)}[v']=DF_z[v'']$, where $v''$ is the component of $v'$ along $T_z\C$; therefore $F(\cU)$ is tangent to $Z_{f(q)}$ in $F(z)$.
\end{proof}

\begin{rem}We consider the case when $F(\cU)\subseteq Z_{f(q)}$ as a tangency case.\end{rem}

From this point of view, we obtain another proof of a known result (see \cite{GSS:Twistor}*{Proposition 3.3} and~\cite{Ghiloni2020}) regarding
the invertibility of the real differential of a slice regular function. Before state it, we set up the following couple of notations.
For $I\in\sfera$, we identify $T_I\sfera\subset T_I\H\cong\H$ with
$$A_I=\{p\in \H\ :\ Ip+pI=0\}$$
(see \cite{Mongodi:AssAlg}*{Lemma 3.1}). Moreover, given $w\in\C\otimes \H$, $w=1\otimes a+\sqrt{-1}\otimes b$, we identify $T_w(\C\otimes\H)$ with $T_a\H\oplus T_b\H$.

\begin{lemma}\label{lmm_zeroD}Suppose that $q=\pi(z,I)$ with $\Im(z)\neq 0$. The differential $Df_q$ is singular if and only if one of the following three cases occurs:
\begin{enumerate}
\item $F_1(z)=0$;
\item $DF_z=0$;
\item $\pi(DF_z,I)F_1(z)^{-1}\in A_I$.
\end{enumerate}
\end{lemma}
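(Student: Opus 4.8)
The statement characterizes exactly when $Df_q$ vanishes (not merely degenerates), so I want to compute $Df_q$ explicitly in terms of $F_0$, $F_1$ and the spherical variable $I$, and then read off when every term is zero. Building on Remark~\ref{rem_Clinear}, I would first exploit the splitting $T_qU = T_{\alpha+\imath\beta}\C \oplus T_I\sfera$ valid away from the reals, and compute the action of $Df_q$ separately on the two summands.

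**Main computation.**
For the $\C$-direction, using $f(\alpha+I\beta)=F_0(\alpha+\imath\beta)+IF_1(\alpha+\imath\beta)$ and differentiating in the $(\alpha,\beta)$-plane, the image is governed by $\pi(DF_z, I)$, i.e. by $DF_z$ pushed through $\pi$ at the fixed unit $I$; by Remark~\ref{rem_Clinear} the two real directions in $T_z\C$ are related by $L_I$, so the $\C$-contribution of $Df_q$ vanishes precisely when $DF_z=0$ (this will produce case (2)). For the spherical direction, I differentiate in $I$: a tangent vector to $\sfera$ at $I$ lies in $A_I=\{p : Ip+pI=0\}$, and moving $I$ while holding $z$ fixed changes $f$ through the term $IF_1(z)$, so the $\sfera$-contribution is, up to right multiplication by $F_1(z)$, a vector in $A_I$ (the identification $T_I\sfera\cong A_I$ from \cite{Mongodi:AssAlg}*{Lemma 3.1}). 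This contribution automatically vanishes when $F_1(z)=0$, giving case (1).

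**Combining the directions.**
The subtle point is that $Df_q$ vanishes only when \emph{both} contributions vanish \emph{simultaneously}, and the two images need not be independent. Assuming $F_1(z)\neq 0$ and $DF_z\neq 0$ (so cases (1) and (2) fail), I must show the remaining vanishing condition is exactly (3). The idea is that the $\C$-image and the $\sfera$-image both land in $\H$, and $Df_q=0$ forces the $\C$-image $\pi(DF_z,I)$ to be cancelled against the $\sfera$-image; since the $\sfera$-image ranges over $A_I \cdot F_1(z)$ as the spherical tangent varies, the consistency condition collapses to requiring that $\pi(DF_z,I)F_1(z)^{-1}$ itself lie in $A_I$. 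Concretely, I would write $Df_q[v'] = \pi(DF_z,I)[v''] + (\text{variation in }I)\,F_1(z)$ and argue that this can be zero for the relevant $v'$ iff $\pi(DF_z,I)F_1(z)^{-1}\in A_I$; here the fact that $A_I$ is precisely the tangent space to $\sfera$ (a $2$-dimensional real subspace of the imaginary quaternions orthogonal to $I$) is what makes the identification exact.

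**Expected obstacle.**
The hard part will be the bookkeeping in the last step: disentangling how the two independent real directions in $T_z\C$ (linked by $L_I$ via Remark~\ref{rem_Clinear}) interact with the two directions in $A_I$, and verifying that the vanishing of the full $Df_q$ genuinely reduces to the single algebraic membership $\pi(DF_z,I)F_1(z)^{-1}\in A_I$ rather than a stronger condition. I would handle this by checking the quaternionic identity $I\cdot\pi(DF_z,I) + \pi(DF_z,I)\cdot I = 0$ directly — using the componentwise definition of $\pi(\cdot,I)$ and the anticommutation characterizing $A_I$ — since the real and holomorphic structure of $DF_z$ (from slice regularity, $DF_z$ is $\C$-linear) should make exactly one scalar relation suffice to force the whole differential to zero.
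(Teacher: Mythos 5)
Your computational skeleton matches the paper's (split $T_qU\cong T_z\C\oplus T_I\sfera$, identify the spherical block of $Df_q$ with right multiplication by $F_1(z)$ on $A_I$ and the complex block with $\pi(DF_z,I)=DF_0+L_IDF_1$), but the proposal rests on a misreading of the statement that makes the argument internally inconsistent. The lemma --- as the paper's own proof (``the rank of $Df$ drops in the following three cases''), the remark following it, Lemma~\ref{lmm_tang} and Corollary~\ref{cor_mult} all make clear --- characterizes when $Df_q$ is \emph{singular}; ``vanishes'' is loose wording for the vanishing of the Jacobian determinant, not of the linear map itself. Under the literal reading you insist on (``not merely degenerates''), the statement is false and no proof could succeed: if $F_1(z)\neq 0$, right multiplication by $F_1(z)$ is injective because $\H$ is a division algebra, so $Df_q$ restricted to $T_I\sfera$ is injective and $Df_q$ can never be the zero map; in particular case (3), which presupposes that $F_1(z)^{-1}$ exists, could never produce a vanishing differential. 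Your own text exhibits the contradiction: you first assert (correctly, under your reading) that vanishing forces \emph{both} contributions to vanish simultaneously --- which would yield the conjunction ``$F_1(z)=0$ and $\pi(DF_z,I)=0$'', not a disjunction of three cases --- and then, to extract case (3), you switch to a ``cancellation'' between the two images, which is precisely the condition that $\ker Df_q$ contains a vector with both components nonzero, i.e.\ the singularity condition you explicitly set aside at the start.

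What the proof actually requires, and what the paper does, is an exhaustive analysis of a nonzero kernel vector $v'=v_1'+v_2'$: if $v_1'=0$ then $F_1(z)=0$ (case (1)); if $v_2'=0$ then, via the $L_I$-linearity of Remark~\ref{rem_Clinear}, $\pi(DF_z,I)$ vanishes identically (case (2)); and in the mixed case a single kernel vector gives $\pi(DF_z,I)[v_1']F_1(z)^{-1}\in A_I$ for that one $v_1'$, which must then be promoted to a statement about the whole map. That promotion is the real content of the proof: it uses that left and right multiplications commute, so $R_{F_1(z)}^{-1}$ intertwines with $L_I$, and that $A_I$ is stable under $L_I$, so the image of $R_{F_1(z)}^{-1}\pi(DF_z,I)$ is spanned by an element of $A_I$ together with its $L_I$-image, and hence lies in $A_I$. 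Your plan only gestures at this step, and the identity you propose to verify at the end is the wrong one: you write $I\,\pi(DF_z,I)+\pi(DF_z,I)\,I=0$, but condition (3) concerns $\pi(DF_z,I)F_1(z)^{-1}$, and the factor $F_1(z)^{-1}$ cannot be dropped, since it does not commute with $I$ unless $F_1(z)\in\C_I$. The proposal therefore needs to be rebuilt around the singularity reading and the kernel case analysis before its steps can be carried out.
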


\begin{rem}
Before performing the proof, notice that the three cases of the previous Lemma, correspond to $(1)$ the \textit{spherical derivative} of $f$ being zero
(see~\cite[Definition 1.18]{GSS:RegFunc}); $(2)$ the slice derivative being zero (see again~\cite[Definition 1.7]{GSS:RegFunc}); $(3)$ the product of the slice derivative with the inverse
of the spherical derivative belonging to the orthogonal complement of $Span(1,q)$ (see~\cite[Proposition 29]{Altavilla:RealDiff}). Indeed, for this last claim, notice that
$Span(1,q)^\bot=\{p\in\H\ :\ qp+pq=0\}$, and if $q=\alpha+I\beta$, the equality $qp+pq=0$ is equivalent to $Ip+pI=0$, which defines $A_I$.
\end{rem}

\begin{proof}

Consider $D\mathscr{F}_{(z,I)}:T_z\C\oplus T_I\sfera\to T_a\H\oplus T_b\H\oplus T_I\sfera$, where $a=F_0(z)$, $b=F_1(z)$; we have
$$D\mathscr{F}=\begin{pmatrix}DF_0&0\\DF_1&0\\0& id_I\end{pmatrix}$$
where $DF_0:T_z\C\to T_a\H$, $DF_1:T_z\C\to T_b\H$ are the real differentials of $F_0$ and $F_1$ and $id_I$ is the identity map on $T_I\sfera$.

Likewise, the differential $D\pi_{(w,I)}:T_a\H\oplus T_b\H\oplus T_I\sfera\to T_{\pi(w,I)}\H$ is given by
$$D\pi_{(w,I)}=\begin{pmatrix}id_a&L_I&R'_b\end{pmatrix}$$
where $id_a$ is the identity map on $T_a\H$, $L_I:\H\to\H$ is the left multiplication by $I$, i.e. $L_I(q)=Iq$, and $R'_b:A_s\to \H$ is the right multiplication by $b$, restricted to the subspace $A_I$, i.e. $R'_b(h)=hb$.

Given $v'\in T_z\C\oplus T_I\sfera$, we write it $v'=v'_1+v'_2$, with $v'_1\in T_z\C$ and $v'_2\in T_I\sfera$, then
$$D(\pi\circ\mathscr{F})_{z,I}[v']=(DF_0+L_IDF_1)_z[v'_1]+R_{F_1(z)}'[v'_2]\;.$$
Therefore, the rank of $Df$ drops in the following three cases:
\begin{enumerate}
\item there exists $v'_2\neq 0$ such that $v'_2\in\ker R'_{F_1(z)}$,
\item there exists $v'_1\neq 0$ such that $v'_1\in\ker (DF_0+L_IDF_1)_z$,
\item there exist $v'_1,v'_2\neq 0$ such that $(DF_0+L_I F_1)_z[v'_1]=R_{F_1(z)}'[v'_2]$.
\end{enumerate}
In the first case, as the map $R_b$ is always invertible, unless $b=0$, we conclude that $F_1(z)=0$. 
By Remark \ref{rem_Clinear}, we have that $(DF_0+L_IDF_1)[\imath v'_1]=L_I(DF_0+L_IDF_1)[v'_1]=0$, so $DF_z=0$.

In the third case, we are saying that there exists $v'_1$ such that
$$R_{F_1(z)}^{-1}(DF_0+L_IDF_1)_z[v'_1]\in A_I\;.$$
We note that, as the right multiplication operator commutes with the left one, we have that
$$R_{F_1(z)}^{-1}(DF_0+L_IDF_1)_z[\imath v'_1]=R_{F_1(z)}^{-1}L_I(DF_0+L_IDF_1)_z[v'_1]=L_IR_{F_1(z)}^{-1}(DF_0+L_IDF_1)_z[v'_1]$$
and the subspace $A_I$ is stable under $L_I$, therefore, as both the spaces have real dimension $2$, the third case happens if and only if $R_{F_1(z)}^{-1}(DF_0+L_IDF_1)$ is an isomorphism between $T_z\C$ and $A_I=T_I\sfera$.

By the definition of $A_I$, this happens if and only if
$$L_IR_{F_1(z)}^{-1}(DF_0+L_IDF_1)+R_IR_{F_1(z)}^{-1}(DF_0+L_IDF_1)=0\;,$$
i.e. if and only if $\pi(DF_z,I)F_1(z)^{-1}\in A_I$.
\end{proof}

We remark that cases (2) and (3) are both contained in the geometric statement of Lemma \ref{lmm_tang}, i.e. in both cases $F(z)$ is a point of tangency between $F(\cU)$ and $Z_{f(q)}$ (and a regular point for the latter).

Let us now consider the  holomorphic function $\Phi_q:\C\otimes\H\to\C$, where $q=q_0+q_1i+q_2j+q_3k$ is a quaternion, given by
$$\Phi_q(z_0+z_1i+z_2j+z_3k)=\sum_{h=0}^3(z_h-q_h)^2\;,$$
where $z_0+z_1i+z_2j+z_3k=\phi^{-1}(z_0,z_1,z_2,z_3)$.
The function $\Phi_q$ is such that its zero set equals $Z_q$ when embedded in $\C^4$ (see~\cite[Corollary 3.4]{Mongodi:HolSR}). 
Notice that, if $F:\cU\to\C\otimes\H$ is a map inducing a slice function $f=f_{0}+f_{1}i+f_{2}j+f_{3}k$, and $z=\alpha+\imath\beta\in\cU$, then 
$\Phi_{q}(F(z))=0$ if and only if $(F(z)-q)(F^{c}(z)-q^{c})=0$, if and only if 
$(f-q)^{\mathsf{s}}=(f_{0}-q_{0})^{2}+(f_{1}-q_{1})^{2}+(f_{2}-q_{2})^{2}+(f_{3}-q_{3})^{2}=0$.

As we will see in the
next corollary, properties of $\Phi_q$ relate with the rank of slice regular functions.

\begin{corol}\label{cor_mult}Given $q=\pi(z,I)\in\H$, the real differential $Df_q$ is singular if and only if $z$ is a zero of multiplicity greater than $1$ for the holomorphic function $\Phi_{f(q)}\circ F:\cU\to\C$.\end{corol}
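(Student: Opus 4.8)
The plan is to reduce the statement to the one-variable principle that a holomorphic function $g\colon\cU\to\C$ has a zero of multiplicity greater than $1$ at $z$ precisely when $g(z)=0$ and $g'(z)=0$, and then to recognise the second condition as the tangency already studied in Lemma~\ref{lmm_tang}. As throughout this section, I work under the assumption $\Im(z)\neq 0$, so that $q\notin\R$.

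First I would set $g:=\Phi_{f(q)}\circ F\colon\cU\to\C$. Since $f(q)=\pi(F(z),I)$, by definition $F(z)\in Z_{f(q)}$, and as $Z_{f(q)}$ is the zero set of $\Phi_{f(q)}$ this gives $g(z)=0$; hence $z$ is always a zero of $g$, and the claim reduces to the equivalence between $Df_q$ being singular and $g'(z)=0$. By the chain rule for the holomorphic map $F$ and the holomorphic function $\Phi_{f(q)}$ one has $g'(z)=D(\Phi_{f(q)})_{F(z)}\bigl(F'(z)\bigr)$, where $F'(z)$ is the holomorphic tangent vector to $F(\cU)$ at $F(z)$; therefore $g'(z)=0$ if and only if $F'(z)\in\ker D(\Phi_{f(q)})_{F(z)}$. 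Since the complex gradient of $\Phi_{f(q)}$ at a point $w$ is $2\bigl(w-f(q)\bigr)$, this differential vanishes exactly at the vertex $w=f(q)$, which by Remark~\ref{rem1} is attained by $F(z)$ if and only if $F_1(z)=0$.

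I would then split into the two cases singled out above. If $F_1(z)=0$, then $F(z)=f(q)$ is the unique, singular, real point of $Z_{f(q)}$, so $D(\Phi_{f(q)})_{F(z)}=0$ and $g'(z)=0$ holds automatically; on the other side, $F_1(z)=0$ forces $Df_q$ to be singular, since the restriction $R'_{F_1(z)}$ of the right multiplication to $A_I$ is then the zero map, whence the whole sphere direction $T_I\sfera$ lies in the kernel of $D(\pi\circ\mathscr{F})_{(z,I)}$ (this is case $(1)$ of Lemma~\ref{lmm_zeroD}). Thus both conditions hold and the equivalence is verified here. If instead $F_1(z)\neq 0$, then $F(z)$ is a smooth point of the quadric $Z_{f(q)}$, so $\ker D(\Phi_{f(q)})_{F(z)}=T_{F(z)}Z_{f(q)}$; consequently $g'(z)=0$ if and only if $F'(z)\in T_{F(z)}Z_{f(q)}$, that is, if and only if $F(\cU)$ is tangent to $Z_{f(q)}$ at $F(z)$. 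It remains to recognise this tangency as the singularity of $Df_q$: the chain of equivalences at the start of the proof of Lemma~\ref{lmm_tang} shows, under the present hypotheses, that $Df_q$ is singular if and only if there is a nonzero vector in $T_{F(z)}F(\cU)\cap T_{F(z)}Z_{f(q)}$, and since $T_{F(z)}F(\cU)=\C\cdot F'(z)$ is a complex line while $T_{F(z)}Z_{f(q)}$ is complex-linear, this is exactly $F'(z)\in T_{F(z)}Z_{f(q)}$. Combining the two cases yields the desired equivalence.

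The main obstacle I anticipate lies in the case $F_1(z)\neq 0$: one must check carefully that $\ker D(\Phi_{f(q)})_{F(z)}$ coincides with the complex tangent space $T_{F(z)}Z_{f(q)}$ at a smooth point and, above all, that the geometric tangency $F'(z)\in T_{F(z)}Z_{f(q)}$ is genuinely equivalent to, and not merely implied by, the singularity of $Df_q$. For the latter I would lean on the reversibility of the argument in Lemma~\ref{lmm_tang}, reading its opening string of ``if and only if'' statements together with the biholomorphism $p_1\colon\mathfrak{Z}^*_{f(q)}\to Z^*_{f(q)}$ used there; the hypothesis $F_1(z)\neq 0$ is precisely what makes $F(z)$ a regular point of $Z_{f(q)}$ and keeps $p_1$ a local biholomorphism, so that the real tangent direction produced there is nonzero and complexifies to $F'(z)$. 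A secondary point worth flagging is that the statement is meant for $q\notin\R$: at a real point the sphere $\pi(\{z\}\times\sfera)$ collapses and $\pi$ ceases to be a local diffeomorphism, so the reduction to $D(\pi\circ\mathscr{F})$ is unavailable and the equivalence can genuinely fail (for instance for $f=\mathrm{id}$, where $g=(z-\alpha)^2$ has a double zero at a real $\alpha$ while $Df$ is invertible).
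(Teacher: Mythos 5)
Your proposal is correct and follows essentially the same route as the paper's own proof: reduce to $(\Phi_{f(q)}\circ F)'(z)=0$, split on $F_1(z)=0$ versus $F_1(z)\neq 0$, identify the latter condition with tangency of $F(\cU)$ to $Z_{f(q)}$ at the smooth point $F(z)$, and invoke the (reversible) argument of Lemma~\ref{lmm_tang} together with case (1) of Lemma~\ref{lmm_zeroD}. Your explicit gradient computation and the remark that the equivalence genuinely requires $\Im(z)\neq 0$ (cf.\ $f=\mathrm{id}$ at a real point) are sensible refinements of details the paper leaves implicit, but they do not change the approach.
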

\begin{proof}Since $Z_{f(q)}=\Phi_{f(q)}^{-1}(0)$ (see \cite[Corollary 3.4]{Mongodi:HolSR}), as long as $F_1(z)\neq 0$ the tangent space to $Z_{f(q)}$ at $F(z)$ is given by the equation $D(\Phi_{f(q)})_{F(z)}[v']=0$; therefore,
$$0=(\Phi_{f(q)}\circ F)'(z)[v']=D(\Phi_{f(q)})_{F(z)}DF_z[v']$$
if and only if $DF_z[v'_1]$ is contained in $T_{F(z)}Z_{f(q)}$ for all $v'_1\in T_z\C$, i.e if and only if $F(\cU)$ is tangent to $Z_{f(q)}$ at $F(z)$.

If $F_1(z)=0$, then $F(z)$ has all real components, hence it is the vertex of the cone $Z_{f(q)}$, which is the only singular point of $\Phi_{f(q)}$, which is a zero of multiplicity $2$, so the conclusion is trivial in this case.
\end{proof}

\begin{rem}
This last corollary is a complex analyitic interpretation of a result obtained
independently in the quaternionic setting stating, essentially, that a slice regular function $f$ is singular at a point $q_0$ if and only if, 
$f(q)=f(q_0)+(q-q_0)\star(q-\tilde{q_0})g(q)$ for some $\tilde{q_0}\in\sfera_{q_0}$ 
and some slice regular function $g$ (see \cite[Proposition 3.6]{GSS:Twistor} and \cite[Theorem 30]{Altavilla:RealDiff}.
\end{rem}


We conclude this section by showing that a large class of slice regular functions are \textit{global} covering maps outside their (real) singular locus.
The fact that slice regular functions are \textit{local} coverings can be excerpt already from the content of~\cites{GSS:Twistor,Altavilla:RealDiff,Ghiloni2020}.
Before stating the theorem, let us define the following sets related to singular points and values of a slice regular function.
We denote by $C_0(f)$ the set of critical points of $f$, i.e. 
$$C_0(f)=\{p\in U\,:\,Df_q\text{ is singular}\}\; ;$$
we consider its image under $f$, i.e. the set of critical values 
$$C(f)=f(C_0(f))\;,$$ 
and its saturation under $f$, i.e. the singular locus of $f$, 
$$S(f)=f^{-1}(C(f))\;.$$

\begin{teorema}\label{teoremacover}
Suppose $f$ is a slice regular finite map. Then, the restriction
$$f\vert_{U\setminus S(f)}:U\setminus S(f)\to f(U\setminus S(f))\subseteq \H\setminus C(f)$$
is a covering map.
\end{teorema}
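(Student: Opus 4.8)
The plan is to reduce the statement to the classical topological fact that a proper local homeomorphism onto a connected, locally connected, locally compact Hausdorff base is a covering map. Accordingly, the two things to establish are that the restriction $f|_{U\setminus S(f)}$ is a local homeomorphism and that it is proper onto its image; the covering property then follows formally.

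First I would record the inclusion $C_0(f)\subseteq S(f)$: since $C(f)=f(C_0(f))$, we get $C_0(f)\subseteq f^{-1}(f(C_0(f)))=f^{-1}(C(f))=S(f)$. Hence at every point $q\in U\setminus S(f)$ the real differential $Df_q$ is invertible, being the $4\times 4$ real Jacobian of a map between open subsets of $\H\cong\R^4$ that by construction is not in the critical locus (the geometric characterizations of $C_0(f)$ in Lemma~\ref{lmm_zeroD} and Corollary~\ref{cor_mult} describe precisely this locus). By the inverse function theorem, $f$ restricts to a local diffeomorphism, in particular a local homeomorphism, on $U\setminus S(f)$; in particular the image $f(U\setminus S(f))$ is open in $\H$.

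Next I would verify that the target is a reasonable space and that properness survives the excision. The set $C_0(f)$ is closed, being the vanishing locus of the Jacobian determinant; as $f$ is a finite map it is in particular proper, hence closed, so $C(f)=f(C_0(f))$ is closed and $\H\setminus C(f)$ is an open, hence locally compact Hausdorff, subset of $\H$. For properness of the restriction, take a compact $K\subseteq \H\setminus C(f)$. If $f(x)\in K$ then $f(x)\notin C(f)$, so $x\notin f^{-1}(C(f))=S(f)$; thus $f^{-1}(K)\subseteq U\setminus S(f)$, and the preimage of $K$ under $f|_{U\setminus S(f)}$ equals $f^{-1}(K)$, which is compact because $f$ is proper. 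Therefore $f|_{U\setminus S(f)}$ is proper.

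Finally, being a proper local homeomorphism, $f|_{U\setminus S(f)}$ is also a closed map, so its image is simultaneously open and closed in $\H\setminus C(f)$, i.e. a union of connected components; restricting over each such component, the map is a surjective, proper local homeomorphism onto a connected, locally connected, locally compact Hausdorff space, and the cited topological theorem yields that it is a covering map. The finiteness of the fibers of $f$ ensures finitely many sheets. The step I expect to be the most delicate is the topological bookkeeping around properness: one must use properness exactly to see that compact sets of the punctured codomain pull back into $U\setminus S(f)$ and stay compact, and then to identify the image as a union of components so that the connectedness hypothesis of the covering criterion applies. The differential-geometric input, namely invertibility of $Df_q$ off the critical locus, is immediate from the definitions and the preceding lemmas.
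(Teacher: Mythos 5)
Your proof is correct, but it takes a genuinely different route from the paper's. The paper stays inside its complex-analytic framework: it fixes $p\in f(U\setminus S(f))$, considers the family of holomorphic functions $\Phi_{p'}\circ F:\cU\to\C$ for $p'$ in a relatively compact neighbourhood $V\Subset\H\setminus C(f)$, and combines Corollary~\ref{cor_mult} (critical points of $f$ correspond to multiple zeros of $\Phi_{f(q)}\circ F$) with Hurwitz's theorem to show that the number of preimages is constant on $V$; the evenly covered neighbourhoods are then assembled by hand from local injectivity. Your argument is purely topological: the only input from the surrounding theory is the inclusion $C_0(f)\subseteq S(f)$ and the inverse function theorem, after which everything is delegated to the classical criterion that a proper local homeomorphism onto a connected, locally connected, locally compact Hausdorff base is a covering map. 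What your approach buys is economy and generality: it uses nothing about slice regularity beyond smoothness, and the properness bookkeeping (compacta of $\H\setminus C(f)$ pull back into $U\setminus S(f)$ and stay compact, because $S(f)$ is the full saturation $f^{-1}(C(f))$) is exactly the right reduction. What the paper's approach buys is coherence with its own toolkit: the functions $\Phi_q$ and the Hurwitz argument are reused immediately afterwards in the corollary on injective functions, and the zero-counting proof is in the spirit of the later analysis of $\sigma_k$.

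One caveat, which affects both proofs but which you should make explicit: the paper never defines ``finite map''. You read it as ``proper with finite fibres'', and your argument uses properness of $f:U\to\H$ into all of $\H$ (to conclude that $C(f)$ is closed and that $f^{-1}(K)$ is compact). This is the standard meaning, and it is also what the paper's own proof tacitly needs: Hurwitz's theorem controls zeros of $\Phi_{p'}\circ F$ only on a fixed compact subset of $\cU$, so without properness nothing prevents extra preimages of $p'$ from entering from the boundary of $U$; indeed, for maps that are merely finite-to-one local diffeomorphisms the statement is false (already for $z\mapsto z^2$ on $\C\setminus\{1\}$, where the fibre cardinality over $1$ drops from $2$ to $1$). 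If one instead reads ``finite'' as ``proper onto its image'', your proof survives with a cosmetic change: take as target the set $f(U\setminus S(f))$ itself, which is open in $\H$ by the local homeomorphism property, drop the claim that the image is closed in $\H\setminus C(f)$, and apply the covering criterion component by component to the image.
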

\begin{proof}Take a point $p\in f(U\setminus S(f))$ and a relatively compact neigborhood $V$ of $p$ in $f(U\setminus S(f))$ such that $V\Subset\H\setminus C(f)$. We consider the family of holomorphic maps
$$\{\Phi_{p'}\circ F:\cU\to\C\}_{p'\in V}\;$$
If $p'\to p$, then $\Phi_{p'}\circ F$ converges uniformly on compact sets to $\Phi_p\circ F$.

Suppose that $f^{-1}(p)=\{q_1,\ldots, q_n\}$ and write $q_m=\pi(z_m,I_m)$. As $q_1,\ldots, q_n\in U\setminus S(f)$, the differential of $f$ is non singular around such points, so $f$ is locally injective and, by Corollary \ref{cor_mult}, $\Phi_p\circ F$ has a zero of multiplicity $1$ in each of these points.

Therefore, by Hurwitz's theorem, $\Phi_{p'}\circ F$ has $n$ zeroes (counted with multiplicities) for $p'\in V$ (up to shrinking $V$, if necessary). However, again by Corollary \ref{cor_mult}, as long as $p'\in \H\setminus C(f)$, the zeroes of $\Phi_{p'}\circ F$ have multiplicity $1$, so each point $p'\in V$ has exactly $n$ preimages.

This shows that $f^{-1}(V)$ is the disjoint union of $n$ open sets $V_1,\ldots, V_n$. The differential of $f$ is non singular on every $V_m$ and $f\vert_{V_m}$ is injective, so $f$ is a diffeomorphism between $V_m$ and $V$, for $m=1,\ldots, n$. This implies that $f$ is a covering map from $U\setminus S(f)$ to $f(U\setminus S(f))\subseteq\H\setminus C(f)$.
\end{proof}

Along the same lines of the proof, we obtain the following result, which already appeared, proved with different methods, in \cites{GSS:RegFunc, Altavilla:RealDiff,Ghiloni2020}.
The proof we are going to present differs from those in the cited papers for being more topological, rather than analytical, shedding new light on the geometry hidden in the background of the techniques we used in the previous results. Even if we will not be needing this corollary in the rest of the paper, we think some readers might find the proof helpful in understanding and contextualizing the machinery we introduced in this section.

\begin{corol}If $f:U\to\H$ is slice regular and injective, its real differential is invertible at all points.\end{corol}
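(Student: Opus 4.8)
The plan is to argue by contradiction and reduce the whole statement to the emptiness of the critical set. By definition $Df_q$ fails to be invertible exactly when $q\in C_0(f)$, so $Df$ is invertible at every point of $U\setminus C_0(f)$ for free, and the only thing to prove is that $C_0(f)=\emptyset$. (In the same spirit as Theorem~\ref{teoremacover}, one may also remark that injectivity makes $f$ a degree-one covering onto its image away from $S(f)$, which is a second, purely topological, way to see invertibility off the critical locus; but the real work is ruling out critical points.) So I would assume, for contradiction, that some $q_0=\pi(z_0,I_0)\in C_0(f)$ exists and set $p_0=f(q_0)$, and then show that a value $p'$ arbitrarily close to $p_0$ must have at least two preimages, contradicting injectivity.

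The mechanism is Hurwitz's theorem together with Corollary~\ref{cor_mult}, exactly as in the proof of Theorem~\ref{teoremacover}. By Corollary~\ref{cor_mult}, the holomorphic function $\Phi_{p_0}\circ F:\cU\to\C$ has a zero of multiplicity $m\ge 2$ at $z_0$. Since $f$ is a smooth map between manifolds of equal real dimension, Sard's theorem gives that the critical values $C(f)$ are of measure zero, hence nowhere dense, so I may choose a regular value $p'\notin C(f)$ as close to $p_0$ as I like. As $p'\to p_0$ the polynomial dependence of $\Phi_{p'}$ on $p'$ makes $\Phi_{p'}\circ F$ converge uniformly on compact sets to $\Phi_{p_0}\circ F$; by Hurwitz's theorem, for $p'$ close enough there are exactly $m$ zeroes, counted with multiplicity, in a small disc about $z_0$. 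Because $p'$ is a regular value, Corollary~\ref{cor_mult} forces all of these to be simple, so they are $m\ge 2$ distinct zeroes $z_1',\dots,z_m'$ clustering at $z_0$. Each of them satisfies $F(z_\ell')\in Z_{p'}$, hence provides some $I_\ell'\in\sfera$ with $\pi(F(z_\ell'),I_\ell')=F_0(z_\ell')+I_\ell'F_1(z_\ell')=p'$, i.e.\ a point $\pi(z_\ell',I_\ell')$ mapped by $f$ to $p'$.

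The hard part, and the step I expect to demand the most care, is precisely the passage from distinct zeroes of $\Phi_{p'}\circ F$ to distinct preimages of $p'$. When $F_1(z_\ell')\neq 0$ the equation $F_0(z_\ell')+I_\ell'F_1(z_\ell')=p'$ determines $I_\ell'$ uniquely, and distinct $z_\ell'$ (read off with positive imaginary part) yield distinct quaternions $\pi(z_\ell',I_\ell')$, so injectivity is violated; but two degeneracies must be handled. The first is the subcase $F_1(z_0)=0$ at a non-real $z_0$ (the vanishing of the spherical derivative, case~(1) of Lemma~\ref{lmm_zeroD}), where uniqueness of $I_\ell'$ has to be recovered by perturbing $p'$ off the bad locus. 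The second, and more serious, is the behaviour at real points: since $p$ has real quaternionic components, $\Phi_p\circ F$ satisfies $\Phi_p\circ F(\bar z)=\overline{\Phi_p\circ F(z)}$, so its zeroes come in conjugate pairs and a real zero already carries a baseline double multiplicity stemming from the vertex of the cone $Z_p$. One must therefore show that a genuine critical real point raises the multiplicity strictly beyond this baseline, and that the extra simple zeroes appear as a conjugate pair describing two distinct non-real preimages. It is here that the conjugation symmetry of the stem function $F$ and the geometry of the quadratic cones $Z_p$ enter decisively to close the contradiction.
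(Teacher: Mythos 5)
Your overall strategy---reduce to $C_0(f)=\emptyset$, then use Corollary~\ref{cor_mult} and Hurwitz's theorem to split a multiple zero of $\Phi_{p_0}\circ F$ into several zeroes for nearby values and read those as several preimages---is the same mechanism as the paper's proof, and your use of Sard's theorem is a genuinely nice variant: choosing a \emph{regular} value $p'$ nearby lets you bypass the paper's second branch, in which every value near $p_0$ has a multiple zero and one must run an integrability argument for the distribution $p\mapsto T_p\mathfrak{Z}_{\pi(p)}$ to conclude that $f$ is constant on an open set. (Incidentally, ``measure zero, hence nowhere dense'' is not correct as stated, but density of regular values is all you actually use.) However, there are two genuine gaps. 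First, you never consider the case $\Phi_{p_0}\circ F\equiv 0$, i.e.\ $F(\cU)\subseteq Z_{p_0}$, i.e.\ $(f-p_0)^{\mathsf{s}}\equiv 0$. This case really occurs---it happens exactly when $f-p_0$ is a zero divisor, e.g.\ $f=p_0+\ell_+$ on a domain without real points---and there your argument fails at its first step: $z_0$ has no finite multiplicity $m$, and Hurwitz's theorem gives no count of zeroes of $\Phi_{p'}\circ F$ when the limit function vanishes identically. The paper disposes of this case separately ($f$ is then constant on a two-dimensional surface, contradicting injectivity), and some such separate argument must be added.

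The second gap is the real-point case, which you correctly flag but do not close, and closing it is not merely a matter of ``care'': it needs a sharpening of Corollary~\ref{cor_mult} that is proved neither in the paper nor by you. At a real zero $z'$ the point $F(z')$ is the vertex of the cone $Z_{p'}$, so the multiplicity is at least $2$ whether or not the differential is singular there (for the identity $f(q)=q$ one has $\Phi_{x_0}\circ F(z)=(z-x_0)^2$ at real $x_0$); hence Corollary~\ref{cor_mult}, read literally, fails at real points, and your assertion ``because $p'$ is a regular value, all these zeroes are simple'' is false whenever $p'$ has a real preimage. What you actually need is: at a real point $x_0$, $Df_{x_0}$ is singular if and only if the multiplicity exceeds $2$ (this can be extracted by writing $f-p_0=(q-x_0)\star g$, so that $(f-p_0)^{\mathsf{s}}=(q-x_0)^2g^{\mathsf{s}}$ vanishes to order exactly $2$ at $x_0$ precisely when the slice derivative $g(x_0)$ is nonzero), followed by a count in which each real zero of $\Phi_{p'}\circ F$ contributes $2$ to the multiplicity and each non-real preimage corresponds to a conjugate \emph{pair} of simple zeroes, so that total multiplicity at least $4$ still forces at least two distinct preimages. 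Your proposal stops exactly where this work begins. Finally, your first ``degeneracy'' ($F_1(z_0)=0$ with $z_0$ non-real) needs no perturbation at all: then $f(\pi(z_0,I))=F_0(z_0)$ for every $I\in\sfera$, so $f$ is constant on a $2$-sphere and injectivity fails immediately---this is precisely the paper's first, and simplest, case.
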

\begin{proof}Suppose that the differential is not invertible at $q=\pi(z_0,I)$, then either $F_1(z_0)=0$ and then $f$ is constant on the sphere $\pi(\{z_0\}\times \sfera)$, or $\Phi_{f(q)}\circ F$ has a zero of multiplicity greater than $1$ in $z_0$.

If $\Phi_{f(q)}\circ F$ vanishes identically, then the function $f$ is constant on some $2$-dimensional surface in $U$ (see~\cite[Remark 3.4]{Mongodi:HolSR} and~\cite[Proposition 5.2]{Ghiloni2020}). If $\Phi_{f(q)}\circ F$ does not vanish identically, by Hurwitz's theorem, $\Phi_{q'}\circ F$ has more than one zero (counting multiplicities) for all $q'$ close enough to $f(q)$. However, suppose that there exists a neighborhood $V$ of $f(q)$ such that $\Phi_{q'}\circ F$ has a zero of multiplicity greater than one: this means that $\mathscr{F}(\cU\times\sfera)$ is tangent to $\mathfrak{Z}_{q'}$ for all $q'\in V$.

Without loss of generality, we can suppose that the tangency points are such that $F_1(z)\neq 0$. On $\C\otimes \H\times\sfera$, we consider the smooth subbundle $E$ of the tangent bundle given by
$$\C\otimes\H\times\sfera\ni p\mapsto E_p=T_p\mathfrak{Z}_{\pi(p)}\subseteq T_p(\C\otimes\H\times\sfera)\;.$$
The integral manifolds of this subbundle are obviously the manifolds $\mathfrak{Z}_{\pi(p)}$; on the other hand, $V'=\mathscr{F}^{-1}(\pi^{-1}(V))$ is an open set in $\cU\times\sfera$ and, for every $(z,I)\in V'$, $T_{\mathscr{F}(z,I)}\mathscr{F}(V')\subseteq E_{\mathscr{F}(z,I)}$.
Therefore $\mathscr{F}(V')$ is contained in an integral manifold of the distribution $p\mapsto E_p$, so in $\mathfrak{Z}_{f(q)}$. This implies that $f$ is not injective, as $f_{\vert{\pi(V')}}\equiv f(q)$. 

$$
\begindc{\commdiag}[3]
\obj(0,180)[A]{$\cU\times\sfera$}
\obj(-105,185)[G]{$V'\subset$}
\obj(280,180)[B]{$\C\otimes \H\times\sfera$}
\obj(0,0)[C]{$U$}
\obj(280,0)[D]{$\H$}
\obj(340,0)[G]{$\supset V$}
\obj(280,360)[E]{$E\subset T(\C\otimes \H\times\sfera)$}
\obj(560,180)[F]{$\C\times\sfera$}
\mor{A}{B}{$\mathscr{F}$}[\atleft,\solidarrow]
\mor{A}{C}{$\pi$}[\atright,\solidarrow]
\mor{C}{D}{$f$}[\atright,\solidarrow]
\mor{B}{D}{$\pi$}[\atleft,\solidarrow]
\mor{B}{E}{}
\mor{B}{F}{$(\Phi_\cdot,id)$}
\enddc$$

\end{proof}

\section{Powers in $\C\otimes\H$ as covering maps}\label{secpower}

Having proved Theorem~\ref{teoremacover} it is natural to study the case of $k$-th $\star$-powers, aiming to a result giving necessary and sufficient conditions in order to have the existence of global $k$-th $\star$-roots of
a given slice function. Notice, however, that the $k$-th $\star$-power
corresponds to the pointwise $k$-th power only on slice preserving functions.
Therefore, to better analyse its features it is more convenient to
 move our
attention to its stem function, living in 
$\C\otimes\H$, where such operator corresponds to the pointwise $k$-th power.

Let, hence, $\sigma_k:\C\otimes\H\to\C\otimes\H$ be the function that raises an element of $\C\otimes\H$ to its $k$-th power. We 
recall the identification given in Formula~\eqref{phimap}. In particular, we denote by $e_m$, $m=0,1,2,3$, the elements of the standard basis
$(1\otimes 1,\ 1\otimes i,\ 1\otimes j,\ 1\otimes k)$, respectively.
The corresponding coordinates will be denoted as $z_0,\ z_1,\ z_2,\ z_3$. From now on, whenever working on $\C^4$ we will keep in mind such an identification.

We therefore denote with the same symbol $\sigma_k$ the corresponding map defined on $\C^4$ with values on $\C^4$.
We have
$$\sigma_k(z_0,z_1,z_2,z_3)=(p^k_0(z_0,\vecnorm{z}), z_1p^{k-1}_1(z_0,\vecnorm{z}),z_2p^{k-1}_1(z_0,\vecnorm{z}),z_3p^{k-1}_1(z_0,\vecnorm{z}))$$
where $\vecpart{z}=z_1e_1+z_2e_2+z_3e_3$, $\vecnorm{z}=z_1^2+z_2^2+z_3^2$ and $p_0^k, p_1^{k-1}$ are such that
$$(x+Iy)^k=p^k_0(x,y^2)+Iyp^{k-1}_1(x,y^2)\;.$$

\begin{rem}\label{remAMPA}
Let $f=\mathcal{I}(F):U\to\H$ be a slice regular function such that 
$f=f_0+f_v=f_0+f_1i+f_2j+f_3k$, with $f_{0},f_{1},f_{2},f_{3}$ slice preserving functions
defined on $U$.
 With this representation,
the function $\sigma_k(F)$ corresponds to the $k$-th $\star$-power
$$
f^{\star k}=p_0^k(f_0,f_v^\mathsf{s})+p_1^{k-1}(f_0,f_v^\mathsf{s})(f_1i+f_2j+f_3k)=p_0^k(f_0,f_v^\mathsf{s})+p_1^{k-1}(f_0,f_v^\mathsf{s})f_v\;.
$$
This representation was used in~\cite[Section 7]{AltavillaAMPA} to study algebraic properties of the $k$-th $\star$-power of a slice regular function.
\end{rem}

For the convenience of what follows, let us define the following function $$N(z):=(z_0^2+z_1^2+z_2^2+z_3^2)^\frac{1}{2}\;.$$
Then, we have the following equalities:
\begin{equation}\label{Cheby}
    p_0^k(z_0,\vecnorm{z})=N(z)^kT_k(t)\;,\qquad p_1^{k-1}(z_0,\vecnorm{z})=N(z)^{k-1}U_{k-1}(t)\;,
\end{equation}
where $t=z_0/N(z)$ and $T_k$ and $U_{k-1}$ denote the Chebyshev polynomials of first kind and degree $k$ and of
second kind and degree $k-1$, respectively.

We 
recall now some well known formul\ae   (see~\cite[Chapters 1 and 2]{Mason}). First of all the \textit{Pell equation} states that, for any $n\in\mathbb{N}$,
$$
T_n^2(x)+(1-x^2)U_{n-1}^2(x)=1\;.
$$
Moreover,  the derivation rules of the Chebyshev polynomials read as follows:
$$
\frac{dT_n(x)}{dx}=nU_{n-1}(x),\qquad \frac{dU_{n}}{dx}=\frac{xU_n(x)-(n+1)T_{n+1}(x)}{1-x^2}\;.
$$
Furthermore, recalling that $t=z_0/N(z)$, we have that
$$
\frac{\partial t}{\partial z_0}=\frac{1}{N(z)}(1-t^2),\qquad \frac{\partial t}{\partial z_n}=-t\frac{z_n}{N(z)^2}\;,
$$
for $n=1,2,3$.

We now give an explicit formula for the differential of $\sigma_k$.

\begin{lemma}\label{lmm_jacobianpower}
$\det D\sigma_k(z)$ is a function of $z_0^2$ and $\vecnorm{z}$. More precisely,
$$\det D\sigma_k(z)=k^2(z_0^2+\vecnorm{z})[p_1^{k-1}(z_0,\vecnorm{z})]^2,$$
where $p_1^{k-1}(z_0,\vecnorm{z})=(z_0^2+\vecnorm{z})^{k-1}U_{k-1}\left(\frac{z_0}{(z_0^2+\vecnorm{z})^1/2}\right)$,
and $U_{k-1}$ denotes the Chebyshev polynomial of second kind and of degree $k-1$.
\end{lemma}

\begin{proof}
Assume first that $N(z)\neq 0$.
We start by recalling that 
$$\sigma_k(z_0,z_1,z_2,z_3)=(p^k_0(z_0,\vecnorm{z}), z_1p^{k-1}_1(z_0,\vecnorm{z}),z_2p^{k-1}_1(z_0,\vecnorm{z}),z_3p^{k-1}_1(z_0,\vecnorm{z}))$$ 
and recalling the equalities in Formula~\eqref{Cheby},
we have
$$
\frac{\partial p^k_0(z_0,\vecnorm{z})}{\partial z_0}=\frac{\partial \left(N(z)^kT_k(t)\right)}{\partial z_0}=kN(z)^{k-1}[tT_k(t)+(1-t^2)U_{k-1}(t)]=:A\;,
$$
for $n=1,2,3$, 
$$
\frac{\partial p^k_0(z_0,\vecnorm{z})}{\partial z_n}=\frac{\partial \left(N(z)^kT_k(t)\right)}{\partial z_n}=kN(z)^{k-2}[T_k(t)-tU_{k-1}(t)]z_n=:Bz_n\;.
$$
Passing now to the other components, for $n,m=1,2,3$, we get
$$\frac{\partial z_mp^{k-1}_1(z_0,\vecnorm{z})}{\partial z_0}=\frac{\partial \left(z_m N(z)^{k-1}U_{k-1}(t)\right)}{\partial z_0}=kN(z)^{k-2}[tU_{k-1}(t)-T_k(t)]z_m=:-Bz_m\;,$$
and
\begin{align*}
    \frac{\partial z_mp^{k-1}_1(z_0,\vecnorm{z})}{\partial z_n}&=\frac{\partial \left(z_m N(z)^{k-1}U_{k-1}(t)\right)}{\partial z_n}\\
    &=\delta_{n m}N(z)^{k-1}U_{k-1}(t)+N(z)^{k-3}[(k-1)U_{k-1}(t)+\frac{t}{1-t^2}(kT_k(t)-tU_{k-1}(t))]z_n z_m\\
    &=:\delta_{n m}H+Cz_n z_m\;,
\end{align*}
where $\delta_{n m}$ denotes the \textit{Kronecker delta}.

Thanks to these computations the Jacobian of $\sigma_k$ is equal to
\begin{align*}
   \det D\sigma_k(z)&=\det\begin{pmatrix}
A & Bz_1 & Bz_2 & Bz_3\\
-Bz_1 & H+Cz_1^2 & Cz_1z_2 & Cz_1z_3\\
-Bz_2 & Cz_1z_2 & H+Cz_2^2 & Cz_2z_3\\
-Bz_3 & Cz_1z_3 & Cz_2z_3 & H+Cz_3^2
\end{pmatrix}\\
&=H^2[AH+(AC+B^2)(z_1^2+z_2^2+z_3^2)]\\
&=H^2[AH+(AC+B^2)N(z)^2(1-t^2)]\;.
\end{align*}

We now analyze the term $(AC+B^2)N(z)^2(1-t^2)$. This can be written as follows
\begin{align*}
    (AC+B^2)N(z)^2(1-t^2)=&N(z)^2(1-t^2)\{kN(z)^{k-1}[tT_k(t)+(1-t^2)U_{k-1}(t)]\times\\
    &\times N(z)^{k-3}[(k-1)U_{k-1}(t)+\frac{t}{1-t^2}(kT_k(t)-tU_{k-1}(t))]+\\
    &+{[}kN(z)^{k-2}[T_k(t)-tU_{k-1}(t)]{]}^2\\
    =&kN(z)^{2(k-1)}(1-t^2)\{T_k(t)U_{k-1}(t)[t(k-1)-t\frac{t^2}{1-t^2}+tk-2tk]+\\
    &+T_k^2(t)[k\frac{t^2}{1-t^2}+k]+U_{k-1}^2(t[(1-t^2)(k-1)-t^2+kt^2])\}\\
    =&kN(z)^{2(k-1)}\{kT_k^2(t)+(1-t^2)(k-1)U_{k-1}^2(t)-tT_k(t)U_{k-1}(t)\}\;.
\end{align*}

Therefore we have
\begin{align*}
    \det D\sigma_k(z)=&H^2[AH+(AC+B^2)N(z)^2(1-t^2)]\\
    =&[N(z)^{k-1}U_{k-1}(t)]^2\{kN(z)^{k-1}[tT_k(t)+(1-t^2)U_{k-1}(t)]\times\\
    &\times N(z)^{k-1}U_{k-1}(t)+\\
    &+kN(z)^{2(k-1)}\{kT_k^2(t)+(1-t^2)(k-1)U_{k-1}^2(t)-tT_k(t)U_{k-1}(t)\}\}\\
    =&k^2N(z)^{2(k-1)}[N(z)^{k-1}U_{k-1}(t)]^2\\
    =&k^2(z_0^2+z_1^2+z_2^2+z_3^2)[p_1^{k-1}(z_0,\vecnorm{z})]^2\;,
\end{align*}
where in the third equality we have used the Pell equation.
Hence we have proved the equality whenever $N(z)\neq0$. Now, as the resulting polynomial is of the right degree, then this equality extends also
to the set of points where $N(z)=0$.
\end{proof}

We now want to analyze the zero set of $D\sigma_k$. 
We set 
$$Q^k(t)=\sum_{h=0}^{[k/2]}(-1)^h{k\choose 2h+1}t^{k-1-2h}=\Im((t+\imath)^k)$$
and we notice that $Q^{k}(t)=p_1^{k-1}(t,1)=(t^2+1)^{k-1}U_{k-1}(\frac{t}{1+t^2})$.
Therefore $Q^{k}(t)=0$ if and only if $U_{k-1}(t)=0$. Now, the zeroes of the Chebyshev polynomials are well known (in particular, $U_{k-1}(t)=0$ if and only if $t=\cos(\frac{n}{k}\pi)$ with $n=1,\dots, k-1$), however to be self-contained, we present here a proof of the fact that such roots are all simple. We will later make use of techniques involved in the proof.

\begin{lemma}\label{lmm_Qroots}
$Q^{k}(t)$ has all real and simple roots.
\end{lemma}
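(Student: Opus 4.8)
The plan is to exploit the closed form $Q^{k}(t)=\Im((t+\imath)^k)$ together with a trigonometric substitution that converts $Q^k$ into a rescaled $\sin(k\theta)$. First I would record that $Q^k$ is a polynomial of degree exactly $k-1$: its top-degree term corresponds to $h=0$, namely ${k\choose 1}t^{k-1}=kt^{k-1}$. Consequently it suffices to exhibit $k-1$ pairwise distinct real roots, since then the degree count forces these to be all the roots of $Q^k$ and forces each to be simple.

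For the substitution I set $t=\cot\theta$ with $\theta\in(0,\pi)$. Then
$$t+\imath=\cot\theta+\imath=\frac{\cos\theta+\imath\sin\theta}{\sin\theta}=\frac{e^{\imath\theta}}{\sin\theta}\;,$$
so that $(t+\imath)^k=e^{\imath k\theta}/\sin^k\theta$ and therefore
$$Q^{k}(\cot\theta)=\Im\!\left(\frac{e^{\imath k\theta}}{\sin^{k}\theta}\right)=\frac{\sin(k\theta)}{\sin^{k}\theta}\;.$$
Since $\sin\theta\neq 0$ for $\theta\in(0,\pi)$, this expression vanishes precisely when $\sin(k\theta)=0$, that is at $\theta_n=\tfrac{n}{k}\pi$ for $n=1,\dots,k-1$; the remaining zeros of $\sin(k\theta)$, corresponding to $n=0$ and $n=k$, give $\theta=0,\pi$ and must be discarded because $\cot$ is undefined there.

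Finally I would invoke that $\theta\mapsto\cot\theta$ is a strictly decreasing diffeomorphism of $(0,\pi)$ onto $\R$ (its derivative $-\csc^2\theta$ is everywhere negative, and $\cot\theta\to+\infty$, $-\infty$ at the two endpoints). Hence the $k-1$ values $t_n=\cot\!\big(\tfrac{n}{k}\pi\big)$, $n=1,\dots,k-1$, are pairwise distinct real numbers, each a root of $Q^k$. As $\deg Q^k=k-1$, these exhaust the roots and are all simple, which is the claim.

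The argument is entirely elementary, so there is no genuine analytic obstacle; the only points requiring care are the degree and leading-coefficient bookkeeping and the careful exclusion of the endpoints $\theta=0,\pi$, so that exactly $k-1$ admissible roots are produced rather than $k+1$. The conceptual content is simply that the order-reversing bijection $\cot:(0,\pi)\to\R$ transports the known simple zeros of $\sin(k\theta)$ onto the real line, and this is precisely the technique the subsequent results will reuse.
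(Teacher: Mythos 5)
Your proof is correct, and it follows the same skeleton as the paper's: exhibit $k-1$ pairwise distinct real zeros, then let the degree count $\deg Q^k=k-1$ force both realness and simplicity. The only difference is how the zeros are located. The paper writes $Q^k(t)=\frac{(t+\imath)^k-(t-\imath)^k}{2\imath}$ and observes that $Q^k(t)=0$ exactly when $\mathfrak{C}(t)=\frac{t-\imath}{t+\imath}$ is a $k$-th root of unity, using that the Cayley transform maps $\R$ bijectively onto the unit circle minus $\{1\}$, a set containing $k-1$ of the $k$-th roots of unity. Your cotangent substitution is the real-variable avatar of this argument: $\mathfrak{C}(\cot\theta)=e^{-2\imath\theta}$, so your zeros $\theta_n=n\pi/k$ are precisely the preimages of those roots of unity. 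Your version is more elementary and has the bonus of producing the roots in closed form, $t_n=\cot(n\pi/k)$, consistent with the Chebyshev description the paper quotes. What the paper's formulation buys is the tool it actually reuses: in Proposition \ref{prp_preimages} the Cayley transform reappears as an automorphism of the Riemann sphere applied to \emph{complex} arguments, a setting where the real substitution $t=\cot\theta$ no longer applies; so your closing remark that the cotangent substitution is ``precisely the technique the subsequent results will reuse'' is slightly off --- what gets reused is $\mathfrak{C}$ itself, not its restriction to $\R$. Finally, the paper's proof also records the sign distribution of the roots (via the parity of $Q^k$), since the set $R_k$ of positive roots is needed later; your parametrization yields this for free, as $\cot(n\pi/k)>0$ precisely when $n<k/2$, but it would be worth stating.
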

\begin{proof}
By definition, for $t\in\mathbb{R}$
$$Q^{k}(t)=\frac{(t+\imath)^k-(t-\imath)^k}{2\imath}$$
so, $Q^{k}(t)=0$ if and only if 
$$\left(\frac{t-\imath}{t+\imath}\right)^k=1$$
i.e. if and only if $(t-\imath)/(t+\imath)$ is a $k$-th root of unity. Now, the map \begin{equation}\label{CayleyMap}
    \mathfrak{C}(\zeta)=\frac{\zeta-\imath}{\zeta+\imath}
\end{equation}
is the Cayley transform which is a biholomorphism between the upper half-plane and the unit disc; in particular, it maps the real line bijectively on the unit circle minus the point $\{1\}$, which contains $k-1$ of the $k$-th roots of unity.
For $k$ odd, $Q^k(t)$ is a polynomial in $t^2$, for $k$ even, $Q^k(t)/t$ is a polynomial in $t^2$; therefore, for $k$ odd, $Q^k(t)$ has $(k-1)/2$ positive roots and $(k-1)/2$ negative roots, for $k$ even, $Q^k(t)$ has a zero root, $(k-2)/2$ positive roots and $(k-2)/2$ negative roots.

Therefore, $Q^k(t)$ has $k-1=\deg Q^k(t)$ real roots, which are then all simple roots, and $[(k-1)/2]$ of them are positive roots.
\end{proof}

We now pass to prove that, under suitable hypotheses on the domain and range, $\sigma_k$ is a covering map. In order to
obtain such a result, we need to prepare convenient notations.

Let us consider the set of imaginary units in $\C\otimes \H$
$$\mathcal{S}=\{s=(0,\vecpart{z})\in\C^4\ :\ \vecnorm{z}=1\}\simeq \{p+\sqrt{-1}q\in\C\otimes\H\ :\ (p+\sqrt{-1}q)^2=-1\}.$$
\begin{rem}
To clarify the definition of $\mathcal{S}$, if 
$p=p_1i+p_2j+p_3k,q=q_1+q_2j+q_3k\in\H$, then $(p+\sqrt{-1}q)^2=p^2-q^2+\sqrt{-1}(pq+qp)=-||p||^2+||q||^2+2\sqrt{-1}\langle p,q\rangle$. Therefore,  $p+\sqrt{-1}q\in\mathcal{S}$ if and only if 
$$
\begin{cases}
-||p||^2+||q||^2=1\\
\langle p,q\rangle=0.
\end{cases}$$

Notice that, if $\Im(z)=0$ and $p\in\sfera$ (which corresponds to $q=0$), then $z\otimes p\in\mathcal{S}$, while if $z\otimes p\in\mathcal{S}$, then
$p\in\sfera$ does not hold in general. In fact, the element $i+j+\sqrt{-1}(\frac{1}{3}i-\frac{1}{3}j+\frac{\sqrt{7}}{3}k)$ belongs to $\mathcal{S}$,
but in general $\pi(i+j+\sqrt{-1}(\frac{1}{3}i-\frac{1}{3}j+\frac{\sqrt{7}}{3}k),I)\notin \sfera$: if $I=i$, then $(i+j+i(\frac{1}{3}i-\frac{1}{3}j+\frac{\sqrt{7}}{3}k))^2\neq-1$.
\end{rem}
In analogy with the map $\pi:\C\times\sfera\to\H$, we define
\begin{equation}\label{rho}
\rho:\C^2\times\mathcal{S}\to\mathbb{C}^4
\end{equation}
given by
$$\rho((u_0,u_1),s)=u_0e_0+u_1s\;.$$
The restriction of $\rho$ to $W'=(\C^2\setminus\{u_1=0\})\times\mathcal{S}$ is a $2$-to-$1$ cover of its image $\Omega'=\rho(W')$, given by
$$\Omega'=\{(z_0,\vecpart{z})\in\C^4\ :\ \vecnorm{z}\neq 0\}\;.$$

Given $(z_0,\vecpart{z})\in \Omega'$ we have that $$\rho^{-1}(z_0,\vecpart{z})=\left\{\left((z_0,\sqrt{\vecpart{z}^2}),\frac{\vecpart{z}}{\sqrt{\vecpart{z}^2}}\right),\,\left((z_0,-\sqrt{\vecpart{z}^2}),-\frac{\vecpart{z}}{\sqrt{\vecpart{z}^2}} \right)\right\}.$$

Moreover, if $(z_0,\vecpart{z})\in\C^4$ and $(w_0,\vecpart{w})=\sigma_k((z_0,\vecpart{z}))$, then
\begin{equation}\label{systemroots}
    \begin{cases}w_0=p_0^k(z_0,\vecnorm{z})\\
\vecpart{w}=p_1^{k-1}(z_0,\vecnorm{z})\vecpart{z}
\end{cases}
\end{equation}
therefore, if we put
$$\Omega'_k=\{(z_0,\vecpart{z})\in\C^4\ :\ \vecnorm{z}\neq 0,\ p_1^{k-1}(z_0,\vecnorm{z})\neq 0\}$$
we have that $\sigma_k(\Omega'_k)\subseteq\Omega'$.

We consider $W_k'=\rho^{-1}(\Omega'_k)\subset\C^2\times\mathcal{S}$ and the map $\mathfrak{s}_k:W'_k\to W'$ such that $\rho\circ \mathfrak{s}_k=\sigma_k\circ\rho$ given by
$$\mathfrak{s}_k((u_0,u_1),s)=((p_0^k(u_0,u_1^2), u_1p_1^k(u_0,u_1^2)), s)\;.$$

$$
\begindc{\commdiag}[3]
\obj(0,180)[A]{$W_k'$}
\obj(280,180)[B]{$W'$}
\obj(0,0)[C]{$\Omega_k'$}
\obj(280,0)[D]{$\Omega'$}
\mor{A}{B}{$\mathfrak{s}_k$}[\atleft,\solidarrow]
\mor{A}{C}{$\rho$}[\atright,\solidarrow]
\mor{C}{D}{$\sigma_k$}[\atright,\solidarrow]
\mor{B}{D}{$\rho$}[\atleft,\solidarrow]
\enddc
$$

Finally, let 
$$\Omega=\{(z_0,\vecpart{z})\in\Omega'\ :\ z_0^2+\vecnorm{z}\neq 0,\ z_0\neq 0\}$$
and set $\Omega_k=\Omega'_k\cap\Omega$.

\begin{rem}
Notice that the problem of finding a solution $(z_0,\vecpart{v})\in\C^4$
for the system in Formula~\eqref{systemroots}, can be thought as finding a slice regular function $f$ such that $f^{\star k}=g$, for a given $g$ slice regular. What follows describes, in our geometric language, the natural procedure that one would exploit in
a purely algebraic setting. In particular, if one assume that $\mathcal{I}_g=g_v/\sqrt{g_v^\mathsf{s}}$ and
$\mathcal{I}_f=f_v/\sqrt{f_v^\mathsf{s}}$
are well defined, then we can write $f=f_0+f_1\mathcal{I}_f$, $g=g_0+g_1\mathcal{I}_g$, with $f_0,f_1,g_0,g_1$ slice preserving, and
$f^{\star k}=p_0^k(f_0,f_1^2)+f_1p_1^{k-1}(f_0,f_1^2)\mathcal{I}_f=g_0+g_1\mathcal{I}_g$.
Starting from these formal computation, it is possible to find a number of suggestions
on how to proceed and what is needed to be proved. As one of the aim of this paper is
to show how some of our geometric infrastructure works, we will not follow
this approach, but it will help the reader keeping it in mind when we treat explicit examples.

\end{rem}

\begin{rem}\label{remomegaslice}
Let $f:U\to \H$ be a slice regular function, then with the notation introduced in~\cites{AltavillaAMPA}, we have that, for any $I\in\sfera$,
\begin{itemize}
    \item $F(z)\in \Omega'$ if and only if $f_v^{\mathsf{s}}(\pi(z,I))\neq0$;
    \item $F(z)\in \Omega_k'$ if and only if $f_v^\mathsf{s}(\pi(z,I))\neq0$ and $p_1^{k-1}(f_0(\pi(z,I)),f_v^\mathsf{s}(\pi(z,I)))\neq 0$ (see~\cite[Section 7]{AltavillaAMPA});
    \item $F(z)\in \Omega$ if and only if $f^\mathsf{s}(\pi(z,I))\neq0$ and $f_0(\pi(z,I))\neq 0$.
\end{itemize}

\end{rem}

\begin{propos}\label{prp_preimages}
$\sigma_k$ is $k^2$-to-$1$ from $\Omega_k$ to $\Omega$.
\end{propos}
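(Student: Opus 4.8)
The plan is to show that, for every fixed target $\omega = w_0 e_0 + \vecpart{w} \in \Omega$, the fibre $\sigma_k^{-1}(\omega)\cap\Omega_k$ has exactly $k^2$ points. The guiding idea — which is exactly what the diagram with $\rho$ and $\mathfrak{s}_k$ encodes — is that every such preimage is forced to lie in a single commutative subalgebra of $\C\otimes\H$ isomorphic to $\C\times\C$, where $\sigma_k$ becomes the componentwise $k$-th power and the count becomes elementary. Concretely, since $\omega\in\Omega\subseteq\Omega'$ we have $\vecnorm{w}\neq 0$; I would fix a square root $v_1=\sqrt{\vecnorm{w}}\neq 0$ and set $s_0=\vecpart{w}/v_1$, so that $s_0\in\mathcal{S}$, i.e. $s_0^2=-1$. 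If $\zeta=z_0e_0+\vecpart{z}\in\Omega_k$ satisfies $\sigma_k(\zeta)=\omega$, then comparing vector parts gives $p_1^{k-1}(z_0,\vecnorm{z})\,\vecpart{z}=\vecpart{w}$; as $\zeta\in\Omega_k\subseteq\Omega'_k$ forces $p_1^{k-1}(z_0,\vecnorm{z})\neq 0$, the vector part $\vecpart{z}$ is a nonzero complex multiple of $\vecpart{w}$, whence $\zeta = z_0 e_0 + \mu s_0$ for some $\mu\in\C\setminus\{0\}$. Thus the entire fibre lies in the subalgebra $R=\{z e_0+\mu s_0 : z,\mu\in\C\}$.

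Next I would reduce $\sigma_k$ on $R$ to a product map. Because $s_0^2=-1$ and $s_0$ commutes with the scalars, $R\cong\C[X]/(X^2+1)$, and since $X^2+1=(X-\imath)(X+\imath)$ splits over $\C$, the Chinese Remainder map $\psi(z e_0+\mu s_0)=(z+\mu\imath,\,z-\mu\imath)$ is a ring isomorphism $R\cong\C\times\C$. Under $\psi$ the restriction $\sigma_k|_R$ becomes $(a,b)\mapsto(a^k,b^k)$, and the target reads $\psi(\omega)=(A,B)$ with $A=w_0+\imath v_1$ and $B=w_0-\imath v_1$. I would then record how the defining inequalities of $\Omega$ translate: $\vecnorm{w}\neq0$ gives $A\neq B$, the condition $w_0^2+\vecnorm{w}\neq 0$ is exactly $AB\neq0$ (so $A,B\neq0$), and $w_0\neq0$ gives $A\neq -B$.

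Finally I would count and check membership in $\Omega_k$. Since $A,B\neq 0$, the equations $a^k=A$ and $b^k=B$ have $k$ distinct nonzero solutions each, yielding $k^2$ distinct pairs $(a,b)$ and hence $k^2$ distinct candidates $\zeta=\psi^{-1}(a,b)$, with $z_0=\tfrac{a+b}{2}$ and $\mu=\tfrac{a-b}{2\imath}$. It remains to verify that each candidate lies in $\Omega_k$, which amounts to $a\neq b$, $a\neq -b$ and $a,b\neq0$: indeed $\vecnorm{z}=\mu^2\neq0\iff a\neq b$, $z_0^2+\vecnorm{z}=ab\neq0\iff a,b\neq0$, and $z_0\neq0\iff a\neq -b$, while $p_1^{k-1}(z_0,\vecnorm{z})\neq0$ follows automatically from $\mu\neq0$ together with $\sigma_k(\zeta)=\omega$. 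The coincidence $a=b$ would force $A=a^k=b^k=B$, which is excluded; and $a=-b$ would force $A=(-1)^kB$, i.e. $A=B$ for $k$ even and $A=-B$ for $k$ odd, both excluded. Hence all $k^2$ candidates are genuine preimages in $\Omega_k$, and by the localization step they exhaust the fibre.

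The main obstacle, and the only genuinely delicate point, is this last verification: one must check that the three inequalities cutting $\Omega$ out of $\Omega'$ are \emph{precisely} what is needed to rule out the degenerate coincidences $a=\pm b$ (and $a,b=0$) for \emph{every} solution, so that no candidate silently drops out of $\Omega_k$. This is where the parity of $k$ enters, through the implication $a=-b\Rightarrow A=(-1)^kB$, and it is also what explains, geometrically, the appearance of two independent $k$-fold monodromies — one for each factor of $\C\times\C$ — responsible for the $k^2$ count.
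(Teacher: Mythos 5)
Your proof is correct, but it takes a genuinely different route from the paper's. Both arguments begin with the same localization step (the paper does it by lifting through the $2$-to-$1$ map $\rho$ to $W_k\subset\C^2\times\mathcal{S}$; you do it by observing that the vector part of any preimage must be a nonzero multiple of $\vecpart{w}$, confining the fibre to the commutative subalgebra $R=\C e_0\oplus\C s_0$), but the counting is done differently. The paper solves the system $v_0=p_0^k(u_0,u_1^2)$, $v_1=u_1p_1^{k-1}(u_0,u_1^2)$ by first invoking the affine Bezout bound, then introducing the ratio $t=u_0/u_1$ and the Cayley transform to get $\mathfrak{C}^k(t)=\mathfrak{C}(\lambda)$, which yields $k$ admissible ratios, and finally extracting $k$ values of $u_1$ for each ratio; the hypotheses on $\Omega$ enter to guarantee $\mathfrak{C}(\lambda)\neq 0,\infty$. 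You instead diagonalize $R\cong\C[X]/(X^2+1)\cong\C\times\C$ by the Chinese Remainder Theorem, under which $\sigma_k$ becomes $(a,b)\mapsto(a^k,b^k)$, and the three inequalities cutting out $\Omega$ become exactly $A\neq B$, $AB\neq 0$, $A\neq -B$; the $k^2$ count is then immediate, and your parity analysis of $a=-b$ makes explicit why \emph{both} conditions $z_0\neq 0$ and $z_0^2+\vecnorm{z}\neq 0$ are needed. The two coordinatizations are related by $\mathfrak{C}(t)=b/a$, so the paper's ``ratio-then-scale'' count is your ``$(a,b)$'' count in disguise. What your version buys is conceptual transparency: the splitting into two independent complex $k$-th power maps is precisely the ``double monodromy'' responsible for $\Aut_{\mathfrak{s}_k}\cong\Z_k\times\Z_k$ in the last section, and your idempotents $(e_0\mp\imath s_0)/2$ are the stem-function counterparts of $\ell_\pm$ in the Peirce decomposition used for the examples. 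What the paper's version buys is infrastructure: the objects it sets up ($\mathfrak{s}_k$, $W_k$, the Cayley transform, the Chebyshev representation) are reused verbatim in Lemma \ref{lmm_Qroots}, in Theorem \ref{teo_powercover}, and throughout Section \ref{secmonodromy}, so the slightly more computational proof amortizes over the rest of the paper.
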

\begin{proof}
We consider $W=\rho^{-1}(\Omega)$ and $W_k=\rho^{-1}(\Omega_k)$; if we show that the map $\mathfrak{s}_k$ is $k^2$-to-$1$ from $W_k$ to $W$, the thesis will follow, as $\mathfrak{s}_k$ acts as the identity on $\mathcal{S}$, so the preimages of a point through $\mathfrak{s}_k$ are sent to different values by $\rho$.

Given $((v_0,v_1),s')\in W$, the equation $\mathfrak{s}_k((u_0,u_1),s)=((v_0,v_1),s')$ is satisfied if and only if $s=s'$ and 
$$v_0=p_0^k(u_0,u_1^2)\qquad v_1=u_1p_1^{k-1}(u_0, u_1^2)\;.$$
These two are polynomial equations in the variables $(u_0,u_1)$ of (total) degree $k$, so, by the affine Bezout theorem, the system they form has at most $k^2$ solutions.

We set $t=u_0/u_1$, as in $W_k$ we have that $u_1\neq 0$, then
$$p_0^k(t,1)=\frac{(t+\imath)^k+(t-\imath)^k}{2}\;,\qquad p_1^{k-1}(t,1)=\frac{(t+\imath)^k-(t-\imath)^k}{2\imath}$$
so, as $v_1=u_1p_1^{k-1}(u_0,u_1)=u_1^{k}p^{k-1}_1(t,1)\neq 0$ in $W_k$,
$$\lambda=\frac{v_0}{v_1}=\imath\frac{(t+\imath)^k+(t-\imath)^k}{(t+\imath)^k-(t-\imath)^k}=\imath\frac{1+\mathfrak{C}^k(t)}{1-\mathfrak{C}^k(t)}$$
where $\mathfrak{C}$ is the Cayley transform introduced in Formula~\eqref{CayleyMap}, which is an automorphism of the Riemann sphere, sending $\infty$ to $1$. We have
\begin{equation}\label{eq_cayley}\mathfrak{C}^k(t)=\frac{\lambda-\imath}{\lambda+\imath}=\mathfrak{C}(\lambda)\end{equation}
and we note that $\mathfrak{C}(\lambda)=0,\infty$ if and only if $\lambda=\pm \imath$ if and only if $v_0=\pm \imath v_1$ if and only if $v_0^2+\vecnorm{v}=0$ which does not happen in $W$.

Therefore, for $((v_0,v_1),s)\in W$, we have $k$ distinct solutions $t_1,\ldots, t_k$ to equation \eqref{eq_cayley}. If we fix $m\in \{1,\ldots, k\}$, then $u_0=t_mu_1$ and
$$v_1=u_1^kp_1^{k-1}(t_m,1)\;.$$

Hence, we obtain 
$$u_1^k=\frac{v_1}{p_1^{k-1}(t_m,1)}$$
which has $k$ different solutions $\omega_{m,1},\ldots, \omega_{m,k}$ for each $m\in \{1,\ldots, k\}$. This gives a total of $k^2$ solutions of the form $(t_m\omega_{m,n}, \omega_{m,n})$ for $m,n\in\{1,\ldots,k\}$.
\end{proof}

We are now able to state our result.

\begin{teorema}\label{teo_powercover}
The function $\sigma_k:\Omega_k\to\Omega$ is a covering map of degree $k^2$.
\end{teorema}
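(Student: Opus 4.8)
The plan is to deduce the statement from the two facts already in hand: that $\sigma_k$ is a local biholomorphism on $\Omega_k$, and that its fibres over $\Omega$ have \emph{exactly} $k^2$ points. The first gives local sheets, the second forces these sheets to exhaust each fibre, which is precisely what an evenly covered neighbourhood requires. The argument will run parallel to the proof of Theorem~\ref{teoremacover}, with Proposition~\ref{prp_preimages} playing the role that Hurwitz's theorem played there.

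First I would record that $D\sigma_k$ does not vanish anywhere on $\Omega_k$. By Lemma~\ref{lmm_jacobianpower} we have $D\sigma_k(z)=k^2N(z)^2[p_1^{k-1}(z_0,\vecnorm{z})]^2$, and on $\Omega_k=\Omega_k'\cap\Omega$ both relevant factors are non-zero: the defining condition of $\Omega$ yields $N(z)^2=z_0^2+\vecnorm{z}\neq 0$, while the defining condition of $\Omega_k'$ yields $p_1^{k-1}(z_0,\vecnorm{z})\neq 0$. Thus $\sigma_k$ is holomorphic with nowhere-vanishing Jacobian, so by the inverse function theorem it is a local biholomorphism on $\Omega_k$; in particular it is open and locally injective.

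Next I would fix $w\in\Omega$ and, using Proposition~\ref{prp_preimages}, write its fibre as $\sigma_k^{-1}(w)=\{z_1,\dots,z_{k^2}\}$. By local injectivity I would select pairwise disjoint open neighbourhoods $U_i\ni z_i$ with $\sigma_k|_{U_i}$ a biholomorphism onto an open set $V_i\ni w$, then set $V=\bigcap_{i=1}^{k^2}V_i$ and $U_i'=U_i\cap\sigma_k^{-1}(V)$. Each restriction $\sigma_k|_{U_i'}\colon U_i'\to V$ is then a homeomorphism, and the $U_i'$ are pairwise disjoint; the theorem follows once $\sigma_k^{-1}(V)=\bigsqcup_i U_i'$, which exhibits $V$ as an evenly covered neighbourhood. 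The inclusion $\supseteq$ is immediate. For $\subseteq$, take $w'\in V$: each $U_i'$ contributes exactly one preimage of $w'$, so $\bigsqcup_i U_i'$ contains exactly $k^2$ preimages, and since $w'$ has \emph{exactly} $k^2$ preimages in $\Omega_k$ by Proposition~\ref{prp_preimages}, these are all of them.

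The step I expect to be the crux is precisely this exhaustion: what prevents a preimage from "escaping" towards the boundary of $\Omega_k$ and being missed by the sheets $U_i'$ is not local injectivity but the fact that the fibre cardinality is constant and equal to $k^2$ over all of $\Omega$, and not merely generically. This exactness is exactly the content of Proposition~\ref{prp_preimages}. Alternatively, one could argue structurally through the diagram relating $\sigma_k$ and $\mathfrak{s}_k$: in the coordinates $a=u_0+\imath u_1$, $b=u_0-\imath u_1$ the map $\mathfrak{s}_k$ is simply $(a,b)\mapsto(a^k,b^k)$, a product of two $k$-fold coverings of $\C^*$ by $\C^*$, while $\rho$ is a $2$-to-$1$ covering intertwining $\mathfrak{s}_k$ with $\sigma_k$; there the difficulty shifts to verifying that $\mathfrak{s}_k$ is equivariant with respect to the deck involution of $\rho$ (which in these coordinates swaps $a$ and $b$) and therefore descends to a covering of the quotients.
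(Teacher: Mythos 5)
Your proposal is correct and follows essentially the same route as the paper: its proof likewise combines Lemma~\ref{lmm_jacobianpower} (nonvanishing Jacobian, hence local biholomorphism on $\Omega_k$) with Proposition~\ref{prp_preimages} (every point of $\Omega$ has exactly $k^2$ preimages) and concludes directly that $\sigma_k$ is a covering. The only difference is that you spell out the standard argument that a local homeomorphism with constant finite fibre cardinality is a covering map, a step the paper leaves implicit.
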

\begin{proof}
By Lemma \ref{lmm_jacobianpower}, $\sigma_k$ is a local biholomorphism from $\Omega_k$ to $\Omega$ and by Proposition \ref{prp_preimages} it has a constant number of preimages. Therefore it is a covering map.
\end{proof}

In the last part of this section we give a brief description of the action of $\sigma_{k}$
outside $\Omega_{k}$.
We define $X_k=\C^4\setminus \Omega_k$ and $X=\C^4\setminus\Omega$. For $\alpha\in \C$ we set
$$V_\alpha=\{(z_0,\vecpart{z})\in\C^4\ :\ z_0^2=\alpha\vecnorm{z}\}$$
and
$$V_\infty=\{(z_0,\vecpart{z})\in\C^4\ :\ \vecnorm{z}=0\}\;.$$
Notice that if a slice function $f=\mathcal{I}(F)$ is such that $F$ takes values
in $V_{-1}$, then $f^{\mathsf{s}}\equiv 0$, i.e. $f$ is identically zero or a zero divisor with respect to the $\star$-product (see~\cite[Section 2.4]{AltavillaLAA}).

Collecting everything, if $R_k$ denotes the set of non-negative roots of $Q^k(t)$, then
$$X_k=V_{-1}\cup V_0\cup V_\infty\cup\bigcup_{r\in R_k}V_{r^2}\quad
\textrm{and}\quad X=V_{-1}\cup V_0\cup V_{\infty}\;.$$


We now describe the action of $\sigma_k$ where it is not a cover. 

\begin{propos}\label{propsing}
Let $(z_0,\vecpart{z})\in\C^{4}$. We have the following relations:
\begin{enumerate}
\item if $(z_{0},\vecpart{z})\in V_{-1}$, then $\sigma_{k}(z_{0},\vecpart{z})\in V_{-1}$;
\item if $(z_{0},\vecpart{z})\in V_{0}$ and $k$ is odd, then $\sigma_{k}(z_{0},\vecpart{z})\in V_{0}$;
 \item if $(z_{0},\vecpart{z})\in V_{0}$ and $k$ is even, then $\sigma_{k}(z_{0},\vecpart{z})\in V_{\infty}$;
 \item  if $(z_{0},\vecpart{z})\in V_{\infty}$, then $\sigma_{k}(z_{0},\vecpart{z})\in V_{\infty}$;
 \item  if $(z_{0},\vecpart{z})\in V_{r^{2}}$, with $r\in R_{k}$, then $\vecpart{\sigma_{k}(z_{0},\vecpart{z})}=0$ and hence $\sigma_{k}(z_{0},\vecpart{z})\in V_{\infty}$.
\end{enumerate}
\end{propos}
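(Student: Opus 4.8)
The plan is to write $(w_0,\vecpart{w})=\sigma_k(z_0,\vecpart{z})$, so that the defining formula gives $w_0=p_0^k(z_0,\vecnorm{z})$, $\vecpart{w}=p_1^{k-1}(z_0,\vecnorm{z})\,\vecpart{z}$ and hence $\vecnorm{w}=[p_1^{k-1}(z_0,\vecnorm{z})]^2\,\vecnorm{z}$. Since membership of a point in any $V_\alpha$ or in $V_\infty$ is detected only by the scalars $w_0^2$ and $\vecnorm{w}$, each assertion reduces to computing these two quantities. The key tool I would isolate first is the polynomial identity
$$w_0^2+\vecnorm{w}=(z_0^2+\vecnorm{z})^k,$$
which expresses that $\sigma_k$ raises the complexified symmetrization $N(z)^2=z_0^2+\vecnorm{z}$ to its $k$-th power.

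To establish this identity I would work on the open set $N(z)\neq 0$, insert the Chebyshev expressions $p_0^k=N(z)^kT_k(t)$ and $p_1^{k-1}=N(z)^{k-1}U_{k-1}(t)$ with $t=z_0/N(z)$, and use $\vecnorm{z}=N(z)^2(1-t^2)$; the Pell equation $T_k^2(t)+(1-t^2)U_{k-1}^2(t)=1$ then collapses the sum to $N(z)^{2k}$. As both sides are polynomials agreeing on a dense open set, the identity holds everywhere. Assertion (1) follows at once: on $V_{-1}$ we have $z_0^2+\vecnorm{z}=0$, so $w_0^2+\vecnorm{w}=0$ and $\sigma_k(z_0,\vecpart{z})\in V_{-1}$. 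Assertion (4) is equally direct from $\vecnorm{w}=[p_1^{k-1}]^2\,\vecnorm{z}$: on $V_\infty$ we have $\vecnorm{z}=0$, hence $\vecnorm{w}=0$ and $\sigma_k(z_0,\vecpart{z})\in V_\infty$.

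The two cases over $V_0=\{z_0=0\}$ I would handle by evaluating the generating relation at $z_0=0$, namely $(Iy)^k=I^ky^k=p_0^k(0,y^2)+Iy\,p_1^{k-1}(0,y^2)$. For $k$ even, $I^k=(-1)^{k/2}$ is a real scalar, which forces $p_1^{k-1}(0,\cdot)=0$; hence $\vecpart{w}=0$, so $\vecnorm{w}=0$ and $\sigma_k(z_0,\vecpart{z})\in V_\infty$, giving (3). For $k$ odd, $I^k=(-1)^{(k-1)/2}I$ is purely imaginary, which forces $p_0^k(0,\cdot)=0$; hence $w_0=0$, so $w_0^2=0$ and $\sigma_k(z_0,\vecpart{z})\in V_0$, giving (2).

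The genuine work is in (5). On $V_{r^2}$ one has $z_0^2-r^2\vecnorm{z}=0$, and the goal is to show $p_1^{k-1}(z_0,\vecnorm{z})=0$, which yields $\vecpart{w}=0$, hence $\vecnorm{w}=0$ and $\sigma_k(z_0,\vecpart{z})\in V_\infty$. I would prove that the polynomial $z_0^2-r^2\vecnorm{z}$ divides $p_1^{k-1}(z_0,\vecnorm{z})$. The function $p_1^{k-1}$ is weighted-homogeneous of weighted degree $k-1$, with $z_0$ of weight $1$ and $\vecnorm{z}$ of weight $2$ (since $y\,p_1^{k-1}(x,y^2)$ is homogeneous of degree $k$ in $(x,y)$), and $Q^k(t)=p_1^{k-1}(t,1)$. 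By the proof of Lemma~\ref{lmm_Qroots} the roots of $Q^k$ come in pairs $\pm r$ with $r\in R_k$, plus a root at $0$ when $k$ is even, so $Q^k(t)=c\prod_{r\in R_k}(t^2-r^2)$ for $k$ odd and $Q^k(t)=c\,t\prod_{r\in R_k}(t^2-r^2)$ for $k$ even. Using the scaling $p_1^{k-1}(\lambda a,\lambda^2 b)=\lambda^{k-1}p_1^{k-1}(a,b)$ and matching weighted degrees upgrades these to $p_1^{k-1}(z_0,\vecnorm{z})=c\prod_{r\in R_k}(z_0^2-r^2\vecnorm{z})$ and $p_1^{k-1}(z_0,\vecnorm{z})=c\,z_0\prod_{r\in R_k}(z_0^2-r^2\vecnorm{z})$ respectively, so in each case the factor $z_0^2-r^2\vecnorm{z}$ appears and $p_1^{k-1}$ vanishes on $V_{r^2}$. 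The main obstacle is exactly this passage from the one-variable factorization of $Q^k$ to the two-variable factorization of $p_1^{k-1}(z_0,\vecnorm{z})$: the parity of $Q^k$, which guarantees that both $\pm r$ are roots, together with the weighted homogeneity, is what makes the argument clean and sidesteps any square-root branch ambiguity.
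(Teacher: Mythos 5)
Your proposal is correct, and its overall strategy --- reducing membership in each $V_\alpha$ or $V_\infty$ to the two scalars $w_0$ and $\vecnorm{w}$ computed from the explicit Chebyshev form of $\sigma_k$ --- is the same as the paper's; the difference is in execution, and at the two delicate points you are actually more complete than the paper. For item (1), the paper only verifies the case $k=2$ explicitly, via $(z_0^2-\vecnorm{z})^2+4z_0^2\vecnorm{z}=(z_0^2+\vecnorm{z})^2$, leaving general $k$ implicit; your Pell-equation identity $w_0^2+\vecnorm{w}=(z_0^2+\vecnorm{z})^k$, proved on the dense set $N(z)\neq 0$ and extended as a polynomial identity, settles all $k$ at once and is the cleanest expression of the multiplicativity of the norm form under powers. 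For items (2)--(3), evaluating the generating relation $(Iy)^k=p_0^k(0,y^2)+Iy\,p_1^{k-1}(0,y^2)$ is equivalent to the paper's appeal to the parities $T_n(-x)=(-1)^nT_n(x)$ and $U_n(-x)=(-1)^nU_n(x)$, and item (4) is the same direct computation in both. For item (5), the paper simply declares the claim ``trivial'' from the fact that $R_k$ is the set of positive roots of $Q^k$; your weighted-homogeneity argument, upgrading the one-variable factorizations $Q^k(t)=c\prod_{r\in R_k}(t^2-r^2)$ (times $t$ for $k$ even) to the two-variable factorizations $p_1^{k-1}(z_0,\vecnorm{z})=c\prod_{r\in R_k}(z_0^2-r^2\vecnorm{z})$, respectively $c\,z_0\prod_{r\in R_k}(z_0^2-r^2\vecnorm{z})$, makes that step rigorous and correctly identifies (and avoids) the square-root branch ambiguity that a naive substitution $t=z_0/N(z)$ would introduce; this is a genuine, if modest, improvement in rigor over the paper's one-line dismissal.
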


\begin{proof}
For point $(1)$ it sufficient to notice that if $z_{0}^{2}+\vecpart{z}^{2}=0$, then
$\sigma_{2}(z_{0},\vecpart{z})=(z_{0}^{2}-\vecpart{z}^{2},2z_{0}\vecpart{z})$ and
$(z_{0}^{2}-\vecpart{z}^{2})^{2}+4z_{0}^{2}\vecpart{z}^{2}=(z_{0}^{2}+\vecpart{z}^{2})^{2}=0$.

For points $(2)$ and $(3)$ it is sufficient to recall symmetry properties for Chebyshev polynomials:
$$
T_{n}(-x)=(-1)^{n}T_{n}(x)\;,\qquad U_{n}(-x)=(-1)^{n}U_{n}(x)\;.
$$  

We pass now to point $(4)$: in this case, as $\vecpart{z}^{2}=0$, then
$$
\sigma_{k}(z_{0},\vecpart{z})=(z_{0}^{k}T_{k}(1),\vecpart{z}z^{k-1}U_{k-1}(1))=(z_{0}^{k},\vecpart{z}z^{k-1}U_{k-1}(1))\;,
$$
and hence $\vecpart{\sigma_{k}(z_{0},\vecpart{z})}^{2}=0$.

The last point $(5)$ is trivial as $R_{k}$ is the set of nonnegative roots of $Q^{k}(t)=(t^{2}+1)^{k-1}U_{k-1}(\frac{t}{1+t^{2}})$.
\end{proof}
\begin{rem}
Notice that, if $(z_0,\vecpart{z})\in V_{-1}$, then 
$$p_0^k(z_0, \vecnorm{z})=z_0^k\qquad p_1^{k-1}(z_0,\vecnorm{z})=z_0^{k-1}\;.$$
Therefore, $\sigma_k(z_0,\vecpart{z})=z_0^{k-1}(z_0,\vecpart{z})$ is a $k$-to-$1$ covering of $V_{-1}$ on itself.
The same happens with $V_{\infty}$.
\end{rem}

In the next corollary we interpret Proposition~\ref{propsing} in terms of slice regular functions.
\begin{corol}
Let $f=f_{0}+f_{1}i+f_{2}j+f_{3}k=f_{0}+f_{v}:U\to \H$ be a slice function. Then we have the following relations:
\begin{enumerate}
\item if $f^{\mathsf{s}}\equiv0$, then $(f^{\star k})^{\mathsf{s}}\equiv 0$;
\item if $f_{0}\equiv 0$ and $k$ is odd, then $(f^{\star k})_{0}\equiv 0$;
\item if $f_{0}\equiv 0$ and $k$ is even, then $(f^{\star k})_{v}^{\mathsf{s}}\equiv 0$;
\item if $f_{v}^{\mathsf{s}}\equiv 0$, then $(f^{\star k})_{v}^{\mathsf{s}}\equiv 0$;
\item if $f_{0}^{2}=r^{2}f_{v}^{\mathsf{s}}$, with $r\in R_{k}$, then $(f^{\star k})_{v}\equiv 0$
and hence  $(f^{\star k})_{v}^{\mathsf{s}}\equiv 0$.
\end{enumerate}
\end{corol}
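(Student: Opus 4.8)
The plan is to recognize that this corollary is simply the pointwise translation of Proposition~\ref{propsing} into the language of slice functions, via the identifications already set up in Remark~\ref{remAMPA} and Remark~\ref{remomegaslice}. First I would fix the dictionary. Writing $f=f_0+f_1i+f_2j+f_3k$ with slice preserving components, the stem map reads in the coordinates of Formula~\eqref{phimap} as $F=(f_0,f_1,f_2,f_3)=(z_0,\vecpart{z})$, and by Remark~\ref{remAMPA} the composite $\sigma_k\circ F$ is exactly the stem map of $f^{\star k}$, with scalar part $p_0^k(f_0,f_v^{\mathsf{s}})$ and vector part $p_1^{k-1}(f_0,f_v^{\mathsf{s}})f_v$. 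Under this identification the relevant invariants correspond as $f^{\mathsf{s}}$ to $z_0^2+\vecnorm{z}=N(z)^2$, $f_v^{\mathsf{s}}$ to $\vecnorm{z}$, and $f_0$ to $z_0$, so that the vanishing loci $V_{-1},V_0,V_\infty,V_{r^2}$ translate respectively into the pointwise conditions $f^{\mathsf{s}}=0$, $f_0=0$, $f_v^{\mathsf{s}}=0$, and $f_0^2=r^2f_v^{\mathsf{s}}$.

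With this dictionary in hand each item becomes immediate. For (1), the hypothesis $f^{\mathsf{s}}\equiv 0$ means precisely that $F$ takes values in $V_{-1}$; Proposition~\ref{propsing}(1) gives $\sigma_k(F)\in V_{-1}$ at every point, which, since $(f^{\star k})^{\mathsf{s}}$ corresponds to $N(\sigma_k(F))^2$, reads as $(f^{\star k})^{\mathsf{s}}\equiv 0$. Items (2), (3) and (4) follow the same pattern: one reads $f_0\equiv 0$ (resp. $f_v^{\mathsf{s}}\equiv 0$) as $F$ mapping into $V_0$ (resp. $V_\infty$), applies the corresponding part of Proposition~\ref{propsing}, and translates the resulting inclusion back through the correspondence of $(f^{\star k})_0$ with the scalar component of $\sigma_k(F)$ and of $(f^{\star k})_v^{\mathsf{s}}$ with $\vecpart{\sigma_k(F)}^2$; note that in (3) and (4) the image lands in $V_\infty$, which is exactly $(f^{\star k})_v^{\mathsf{s}}\equiv 0$. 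For (5), the hypothesis $f_0^2=r^2f_v^{\mathsf{s}}$ with $r\in R_k$ places $F$ in $V_{r^2}$, and Proposition~\ref{propsing}(5) yields $\vecpart{\sigma_k(F)}=0$, i.e. $(f^{\star k})_v\equiv 0$; the final assertion then follows since $(f^{\star k})_v^{\mathsf{s}}=\vecpart{\sigma_k(F)}^2$ vanishes as well.

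The only point requiring care, and hence the main (mild) obstacle, is the bookkeeping of the correspondence between the algebraic quantities $(f^{\star k})^{\mathsf{s}}$, $(f^{\star k})_0$, $(f^{\star k})_v^{\mathsf{s}}$, $(f^{\star k})_v$ and the coordinate expressions of $\sigma_k(F)=(p_0^k,p_1^{k-1}\vecpart{z})$. Concretely one should verify that $(f^{\star k})_v^{\mathsf{s}}=[p_1^{k-1}(f_0,f_v^{\mathsf{s}})]^2 f_v^{\mathsf{s}}$ and $(f^{\star k})^{\mathsf{s}}=[p_0^k(f_0,f_v^{\mathsf{s}})]^2+[p_1^{k-1}(f_0,f_v^{\mathsf{s}})]^2 f_v^{\mathsf{s}}$, both of which are read off directly from the expression of $f^{\star k}$ recorded in Remark~\ref{remAMPA}, using that $p_0^k$ and $p_1^{k-1}$ are slice preserving. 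Once this is checked there is no further work: every conclusion is merely the slice-function reading of the corresponding set-theoretic inclusion established in Proposition~\ref{propsing}, applied at each point of the domain.
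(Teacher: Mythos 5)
Your proposal is correct and follows exactly the paper's own argument: the paper proves this corollary precisely as ``a direct application of Proposition~\ref{propsing} taking into account Remark~\ref{remAMPA}'', which is the dictionary you set up ($f^{\mathsf{s}}\leftrightarrow N(z)^2$, $f_0\leftrightarrow z_0$, $f_v^{\mathsf{s}}\leftrightarrow\vecnorm{z}$, so that the hypotheses place $F$ in $V_{-1}$, $V_0$, $V_\infty$, $V_{r^2}$ respectively) followed by a pointwise application of each item of that proposition. Your explicit verification of the correspondences $(f^{\star k})_v^{\mathsf{s}}=[p_1^{k-1}(f_0,f_v^{\mathsf{s}})]^2 f_v^{\mathsf{s}}$ and $(f^{\star k})^{\mathsf{s}}=[p_0^k(f_0,f_v^{\mathsf{s}})]^2+[p_1^{k-1}(f_0,f_v^{\mathsf{s}})]^2 f_v^{\mathsf{s}}$ is a slightly more careful spelling-out of bookkeeping the paper leaves implicit, but it is the same proof.
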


The proof of this last corollary is a direct application of Proposition~\ref{propsing} taking into account
Remark~\ref{remAMPA}, but indeed many of 
statements can be easily deduced from Proposition~\ref{propsing}.
However, notice that point $(5)$ confirms~\cite[Proposition 7.4]{AltavillaAMPA}.


\section{Global $\star$-roots of a slice function}\label{starrootsec}

In this section we apply the results obtained at the end of the last section for the $k$-th power in $\C\otimes\H$, to obtain
results for the $k$-th $\star$-root of a slice function and at the end we also provide a couple of
explicit examples.
We recall Assumption~\ref{assumption1} and we add the following.

\begin{assumption}
From now on, the set $\cU\subset\C$ will always be a simply connected open domain such that $\overline{\cU}=\cU$ or the union of two simply connected open domains that are symmetric with respect to the real line.
\end{assumption}

This last assumption is consistent with the one adopted in~\cite{altavillaLOG} and in~\cite{GPV2} where
the authors call the resulting set $U$ \textit{basic domain}. Notice, in particular, that the hypothesis of simply connectedness is unavoidable, due to standard theory of lifting maps to a covering space (see e.g.~\cite{Kosniowski})

Let $f:U\to\H$ be a slice function.
 We denote by $F:\mathcal{U}\to\C\otimes\H$ its stem function. As in the previous sections, with a small abuse of notation, we will identify $\C\otimes\H$ with $\C^{4}$. So, we will denote with the same
symbol the function $F$ when its range is $\C^{4}$ instead of $\C\otimes\H$ (we
recall that this identification is explicitly given in Formula~\eqref{phimap}).

\begin{propos}\label{prplifts}
If $F(\mathcal{U})\subseteq\Omega$, then there exist $k^2$ functions $F_1,\ldots, F_{k^2}:\mathcal{U}\to\C\otimes\H$ such that $\sigma_k\circ F_m=F$ for $m=1,\ldots, k^2\;.$
\end{propos}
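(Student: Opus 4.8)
The plan is to obtain the functions $F_m$ as the lifts of $F$ through the covering map $\sigma_k\colon\Omega_k\to\Omega$ furnished by Theorem~\ref{teo_powercover}, exploiting the simple connectivity of $\mathcal{U}$. Since by hypothesis $F(\mathcal{U})\subseteq\Omega$, the holomorphic map $F$ may be regarded as a continuous map into the base of this covering, and the $k^2$ desired roots will be precisely its $k^2$ distinct lifts into the total space $\Omega_k\subseteq\C^4\equiv\C\otimes\H$.

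First I would invoke the general lifting criterion from covering space theory: if $p\colon\widetilde{X}\to X$ is a covering map and $g\colon Y\to X$ is continuous with $Y$ connected and locally path-connected, then a lift $\tilde g$ with $p\circ\tilde g=g$ and $\tilde g(y_0)=\tilde x_0\in p^{-1}(g(y_0))$ exists precisely when $g_*\pi_1(Y,y_0)\subseteq p_*\pi_1(\widetilde{X},\tilde x_0)$. Here $Y=\mathcal{U}$ is a domain in $\C$, hence connected and locally path-connected, and the simple connectivity assumption gives $\pi_1(\mathcal{U})=0$, so the inclusion holds automatically for every chosen point of the fiber. Thus, fixing $z_0\in\mathcal{U}$, for each point of $\sigma_k^{-1}(F(z_0))$ there is a lift $F_m\colon\mathcal{U}\to\Omega_k$ of $F$ taking $z_0$ to that point.

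Next I would count and distinguish these lifts. By Proposition~\ref{prp_preimages} (equivalently, since the covering in Theorem~\ref{teo_powercover} has degree $k^2$), the fiber $\sigma_k^{-1}(F(z_0))$ consists of exactly $k^2$ points, so the construction yields $k^2$ lifts $F_1,\ldots,F_{k^2}$. They are pairwise distinct: two lifts of the same map that agree at a single point must agree on the whole connected domain $\mathcal{U}$ by the unique-lifting property, while our lifts are prescribed to take distinct values at $z_0$.

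Finally, holomorphy of each $F_m$ is immediate. By Lemma~\ref{lmm_jacobianpower} the complex Jacobian of $\sigma_k$ is nowhere vanishing on $\Omega_k$, so $\sigma_k$ is a local biholomorphism there; over an evenly covered neighbourhood $V$ of a point $F(z)$, the restriction of $\sigma_k$ to each sheet admits a holomorphic inverse, and locally $F_m=(\sigma_k|_{\mathrm{sheet}})^{-1}\circ F$ is a composition of holomorphic maps. I do not anticipate a genuine obstacle: the only points requiring care are checking that the hypotheses of the topological lifting theorem are satisfied and that the resulting continuous lifts are automatically holomorphic, both of which follow directly from the covering property in Theorem~\ref{teo_powercover} and the nonvanishing Jacobian of Lemma~\ref{lmm_jacobianpower}.
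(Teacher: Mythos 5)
Your proof is correct and takes exactly the same route as the paper, whose entire proof is the one-line remark that the statement is ``a direct consequence of Theorem~\ref{teo_powercover} and the lifting property of covering maps.'' You have simply spelled out that one line in full detail (lifting criterion over the simply connected $\mathcal{U}$, distinctness of lifts via unique lifting, and holomorphy from the local biholomorphism), all of which is consistent with the paper's intent.
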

\begin{proof}
This is a direct consequence of Theorem \ref{teo_powercover} and the lifting property of covering maps.
\end{proof}

We notice that, by the very definition of $\sigma_k$, if $F$ and $G$ are stem functions defining two slice functions $f$ and $g$, respectively, then 
$$F=\sigma_k\circ G\Leftrightarrow f=(g)^{\star k}\;.$$

\begin{rem}\label{remarkslice}
As $F$ is a stem function, then $F(\overline{z})=\overline{F(z)}$. Moreover, by the definition of $\sigma$, we have $\sigma_k(\overline{z})=\overline{\sigma_k(z)}$. It follows that, if $\sigma_k\circ G= F$, then 
$$\sigma_k(G(\overline{z}))=\overline{\sigma_k(G(z))}=\sigma_k(\overline{G(z)})\;.$$

Therefore, if $G$ is a solution of the equation $\sigma_k(G)=F$ on the domain $\mathcal{U}$, then the function
$\widehat{G}(z)=\overline{G(\bar z)}$ is another solution on the same domain.
\end{rem}

Thanks to the previous remark we are able to prove the following unexpected result.

\begin{teorema}\label{casosenzareali}
If $F(\mathcal{U})\subset\Omega$ and $\mathcal{U}\cap\R=\emptyset$, then there exist $k^2$ slice functions $f_m:U\to\H$, $m=1,\dots,k^2$, such that
$$(f_m)^{\star k}=f\;.$$
\end{teorema}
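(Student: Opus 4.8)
The plan is to obtain the $k^2$ roots by lifting the stem function $F$ through the covering map $\sigma_k$ and then upgrading each lift to a bona fide stem function; the role of the hypothesis $\cU\cap\R=\emptyset$ is precisely to guarantee that this upgrade is always possible.

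First I would apply Proposition~\ref{prplifts}: since $F(\cU)\subset\Omega$ and $\cU$ is simply connected, the covering map $\sigma_k\colon\Omega_k\to\Omega$ of Theorem~\ref{teo_powercover} admits $k^2$ distinct holomorphic lifts $G_1,\dots,G_{k^2}\colon\cU\to\C\otimes\H$ with $\sigma_k\circ G_m=F$. By the equivalence $F=\sigma_k\circ G\Leftrightarrow f=g^{\star k}$ recalled before Remark~\ref{remarkslice}, the only thing that prevents each $G_m$ from defining a slice $k$-th $\star$-root of $f$ is the stem symmetry $G_m(\bar z)=\overline{G_m(z)}$.

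The second step supplies this symmetry. Because $\cU$ is simply connected and disjoint from $\R$, it lies in one open half-plane, say $\cU\subset\C^+$, and the stem function $F$ is recovered on the symmetric hull $\cU\cup\overline{\cU}$ from its values on $\cU$ via $F(\bar z)=\overline{F(z)}$. For each lift I would define $H_m$ on $\cU\cup\overline{\cU}$ by setting $H_m=G_m$ on $\cU$ and $H_m(z)=\overline{G_m(\bar z)}$ on $\overline{\cU}$. Arguing exactly as in Remark~\ref{remarkslice}, the identity $\sigma_k(\overline{w})=\overline{\sigma_k(w)}$ together with the stem property of $F$ gives $\sigma_k(H_m(z))=\overline{\sigma_k(G_m(\bar z))}=\overline{F(\bar z)}=F(z)$ for $z\in\overline{\cU}$, so $H_m$ is a stem function that lifts $F$ on the whole symmetric hull. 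Hence $f_m=\mathcal{I}(H_m)$ is a slice function with $f_m^{\star k}=f$, and the $k^2$ functions $f_m$ are pairwise distinct since the $G_m$ already are.

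The crux --- and the reason the count stays at $k^2$ here rather than dropping --- lies in the compatibility between a lift on the upper half and its conjugate on the lower half. When $\cU$ avoids $\R$ the two halves are disjoint, so the conjugate extension $H_m$ can be formed freely for \emph{every} lift $G_m$, and all $k^2$ survive. Were $\cU$ to meet $\R$, the two halves would share boundary values along the real axis, and continuity across $\R$ would force a lift chosen on one side to match a prescribed lift on the other; the stem condition would then become a genuine constraint satisfied by only $k$ of the $k^2$ lifts, which is exactly the mechanism behind the reduced count in Theorem~\ref{teo_kroots}. I expect verifying this freedom of extension to be the only real subtlety; the rest is a direct application of the covering-space lifting theorem already packaged in Proposition~\ref{prplifts}.
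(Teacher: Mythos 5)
Your proposal is correct and takes essentially the same route as the paper: both obtain the $k^2$ lifts from Proposition~\ref{prplifts} and then use the conjugation symmetry $\sigma_k(\overline{w})=\overline{\sigma_k(w)}$ of Remark~\ref{remarkslice} to enforce the stem condition, exploiting that $\cU^+$ and $\cU^-$ are disjoint. The only cosmetic difference is that the paper encodes your reflection step $H_m(z)=\overline{G_m(\bar z)}$ as a permutation $\tau$ of the set of lifts (well defined by uniqueness of lifts) and then glues $G_\mu$ on $\cU^+$ with $G_{\tau(\mu)}$ on $\cU^-$, which produces exactly the same stem functions as yours.
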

\begin{proof}
From Proposition~\ref{prplifts} we now that there are $k^2$ functions $G_1,\ldots, G_{k^2}:\mathcal{U}\to\C\otimes\H$ such that $\sigma_k\circ G_m=F$.
Now, from Remark~\ref{remarkslice} for any $m=1,\dots, k^2$ there exists $n=1,\dots, k^2$, such that $\widehat G_m(z)=G_n(z)$, for all $z\in \mathcal{U}$.
Moreover, such $n$ is unique from Theorem~\ref{teo_powercover}. Therefore, we can define a function $\tau:\{1,\dots,k^2\}\to\{1,\dots, k^2\}$ as
$\tau(m)=n$, where $n$ is such that $\widehat G_m(z)=G_n(z)$ for all $z\in\mathcal{U}$.
Hence, we are able to define the following stem functions $F_\mu:\mathcal{U}\to\C\otimes\H$ as
$$
F_\mu(z)=\begin{cases}
G_\mu(z),&\mbox{if }z\in\mathcal{U}^+\;,\\
G_{\tau(\mu)}(z),&\mbox{if }z\in\mathcal{U}^-.
\end{cases}$$
Notice that all these stem functions are well defined as $\cU\cap\R=\emptyset$.
Then, the induced stem functions $f_1,\dots,f_{k^2}$ are such that $(f_\mu)^{\star k}=f$ for any $\mu=1,\dots, k^2$.
\end{proof}

The case in which the domain of $f$ has real points follows the general expectation. However, imposing such a condition on the 
domain implies greater effort in the proof: the stem function of any $k$-th root of $f$ has to be real on $\mathcal{U}\cap\R$.

\begin{rem}Let $\R^4$ denote the set of real points in $\C^4$, corresponding to the set $\{p+\sqrt{-1}\cdot 0\ :\ p\in\H\}\subset\C\otimes\H$, then, on it the projection $\pi:\C\otimes\H\times\sfera\to\H$ restricts to  the map $h:\R^4\to\H$ given by $h(x_0,x_1,x_2,x_3)=x_0+x_1i+x_2j+x_3k$. We notice that $\sigma_k(\R^4)\subseteq\R^4$ for all $k$ and 
\begin{equation}\label{eqovvia}h\circ ({\sigma_k}_{|\R^4})=h^k\;,\end{equation}
where in the right hand side we mean the $k$-th power in the algebra $\H$.\end{rem}

In view of these considerations, if $G$ and $F$ are two stem functions such that $\sigma_k\circ G=F$, then, given $x\in \mathcal{U}\cap\R$, $F(x)\in\R^4$ and $G(x)\in\R^4$ are such that $h(G(x))^k=h(F(x))$ (the power is understood as an operation in $\H$).
As we already know, if $F(x)$ is outside a critical set, the equation $q^k=h(F(x))$ has exactly $k$ solutions in $\H$, which correspond to $k$ points in $\R^4$ via $h$.

\begin{teorema}\label{teo_kroots}
If $F(\mathcal{U})\subseteq\Omega$ and $\mathcal{U}\cap\R\neq\emptyset$, then there exist $k$ slice functions $f_m:U\to\H$, $m=1,\dots,k$, such that
$$(f_m)^{\star k}=f\;.$$
\end{teorema}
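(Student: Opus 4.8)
The plan is to follow the strategy of Theorem~\ref{casosenzareali}, but to exploit that now $\mathcal{U}$ is connected and meets $\R$, which rigidifies the lifts and forces the self-conjugacy condition to reduce the count from $k^2$ to $k$. First I would apply Proposition~\ref{prplifts} to obtain the $k^2$ holomorphic lifts $G_1,\dots,G_{k^2}\colon\mathcal{U}\to\C\otimes\H$ with $\sigma_k\circ G_m=F$. Since $\sigma_k^{-1}(\Omega)\subseteq\Omega_k$ --- as one checks directly from $\vecnorm{\sigma_k(w)}=[p_1^{k-1}(w_0,\vecnorm{w})]^2\,\vecnorm{w}$ together with Proposition~\ref{propsing} --- every such lift takes values in $\Omega_k$ and is a genuine section of the covering map of Theorem~\ref{teo_powercover}. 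The key observation is that $G_m$ induces a slice $k$-th $\star$-root of $f$ exactly when $G_m$ is itself a stem function, i.e. when $G_m(\bar z)=\overline{G_m(z)}$; by Remark~\ref{remarkslice} this means $\widehat{G}_m=G_m$. Hence the roots of $g^{\star k}=f$ correspond bijectively to the fixed points of the conjugation involution $G\mapsto\widehat{G}$ acting on the set of $k^2$ lifts.

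To count these fixed points I would evaluate at a real point $x_0\in\mathcal{U}\cap\R$. As $\mathcal{U}$ is connected and $\sigma_k$ is a covering map, each lift is uniquely determined by its value at $x_0$, so $G\mapsto G(x_0)$ is a bijection between the $k^2$ lifts and the fibre $\sigma_k^{-1}(F(x_0))\cap\Omega_k$. Under this bijection the involution becomes the complex conjugation of $\C\otimes\H$, because $\widehat{G}(x_0)=\overline{G(\bar x_0)}=\overline{G(x_0)}$; this conjugation indeed permutes the fibre, since $\sigma_k(\overline w)=\overline{\sigma_k(w)}$. Therefore $\widehat{G}=G$ holds if and only if $G(x_0)=\overline{G(x_0)}$, that is, if and only if $G(x_0)$ is a real point of $\C\otimes\H$, i.e. an honest quaternion. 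The number of roots thus equals the number of quaternionic $k$-th roots of $F(x_0)$ that lie in $\Omega_k$.

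It remains to count these. Since $x_0$ is real and $F$ is a stem function, $q:=F(x_0)=\overline{F(x_0)}$ is a quaternion; moreover $q\in\Omega$ forces $q$ to have nonzero vector part and nonzero real part, so $q=\alpha+I\beta$ with $\alpha,\beta\neq 0$ and $I\in\sfera$. If $p\in\H$ satisfies $p^k=q$, then $p^k$ lies in the plane $\C_J$ determined by the imaginary unit $J$ of $p$; as $q\notin\R$ this forces $\C_J=\C_I$, hence $p\in\C_I$, and $p$ is one of the $k$ distinct $k$-th roots of $q$ in $\C_I\cong\C$. Writing $p=a+Ib$ with $a,b\in\R$, one checks that $a\neq 0$ and $b\neq 0$ (otherwise $p^k$ would be real or purely imaginary, contradicting $\alpha,\beta\neq 0$); consequently the coordinates of $p$ satisfy $\vecnorm{z}=b^2\neq 0$ and, comparing the imaginary parts in $(a+Ib)^k=p_0^k(a,b^2)+Ib\,p_1^{k-1}(a,b^2)=\alpha+I\beta$, also $p_1^{k-1}(a,b^2)=\beta/b\neq 0$, so $p\in\Omega_k$. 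Hence $q$ has exactly $k$ $k$-th roots in $\Omega_k$, yielding $k$ self-conjugate lifts and thus $k$ distinct slice functions $f_m=\mathcal{I}(G_m)$ with $(f_m)^{\star k}=f$.

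The step I expect to be the main obstacle is the reduction in the second paragraph: recognizing that the connectedness of $\mathcal{U}$ across $\R$ collapses the global self-conjugacy requirement $\widehat G=G$ to the single pointwise reality condition $G(x_0)=\overline{G(x_0)}$, and then checking that all $k$ quaternionic roots of $F(x_0)$ actually lie in $\Omega_k$, so that they truly index distinct sheets of the covering.
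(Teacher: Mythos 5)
Your proof is correct, and its core coincides with the paper's: both arguments evaluate at a real point $x_0\in\mathcal{U}\cap\R$, identify stem-function lifts with lifts whose value at $x_0$ is a real point of $\C\otimes\H$ (via the conjugation trick $G\mapsto\overline{G(\bar z)}$ plus uniqueness of lifts on the connected set $\mathcal{U}$), and reduce the problem to counting quaternionic $k$-th roots of $q_0=F(x_0)$. Where you diverge is in how that count is obtained. The paper invokes its own Theorem~\ref{teoremacover} applied to the slice regular function $g(q)=q^k$, together with the identifications $\H\setminus C(g)=h(\R^4\cap\Omega)$ and $\H\setminus S(g)=h(\R^4\cap\Omega_k)$, to conclude that $q_0$ has exactly $k$ preimages; it then constructs one stem lift through each preimage, proving existence of (at least) $k$ roots. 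You instead count elementarily: $q_0=\alpha+I\beta$ with $\alpha,\beta\neq 0$, every root lies in $\C_I$, there are exactly $k$ of them, and each lies in $\Omega_k$ --- in effect making explicit the ``easy computation'' the paper glosses over, while bypassing the covering theorem for $q\mapsto q^k$ entirely. Your fixed-point framing of the involution on all $k^2$ lifts also buys something the paper's constructive direction does not state: the roots are \emph{exactly} $k$ in number, since stem lifts biject with real points of the fibre. The trade-off is that the paper's route exhibits how its global covering machinery subsumes the classical quaternionic power map, whereas yours is more self-contained; both rest on the same two pillars, so neither needs ingredients the other lacks.
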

\begin{proof}
As we know from Theorem \ref{teoremacover}, the map $g(q)=q^k$ is a covering map from $\H\setminus S(g)$ to $\H\setminus C(g)$. An easy computation shows that $\H\setminus C(g)=h(\R^4\cap\Omega)$ and $\H\setminus S(g)=h(\R^4\cap\Omega_k)$, where $\Omega$ and $\Omega_k$ are the sets defined in the previous section.

Let now $x^0$ be a point in $\mathcal{U}\cap\R$, then $F(x^0)\in\R^4$. Consider $q_0=h(F(x^0))$. Then, by hypothesis, $q_0\in \H\setminus C(g)$, so there exist $k$ preimages of $q_0$ via $g$ in $\H\setminus S(g)$. Denoting them by $q_1,\ldots, q_k$, by \eqref{eqovvia}, we have $\sigma_k(h^{-1}(q_m))=F(x^0)$.

We consider the lift $F_m$ of $F$ via $\sigma_k$ such that $F_m(x^0)=h^{-1}(q_m)$. We will show that $F_m$ is a stem function.

Consider $G(z)=\overline{F_m(\bar{z})}$. As $\sigma_k(\bar z)=\overline{\sigma_k(z)}$, we have
$$\sigma_k(G(z))=\sigma_k(\overline{F_m(\bar{z})})=\overline{\sigma_k(F_m(\bar{z}))}\;.$$
Now, $$\sigma_k(F_m(\bar{z}))=F(\bar{z})=\overline{F(z)}$$
because $F$ is a stem function; so
$$\sigma_k(G(z))=F(z)$$
hence $G(z)$ is one of the $k^2$ lifts of $F$ via $\sigma_k$. However, $G(x^0)=h^{-1}(q_m)$, so $G$ must coincide with $F_m$. So
$$F_m(\bar{x})=\overline{F_m(z)}$$
which means that $F_m$ is a stem function. Let $f_m$ be the corresponding slice function, then, by construction $(f_m)^{\star k}=f$.
\end{proof}

As an obvious corollary, we get the corresponding statement for slice regular functions, as the map $\sigma_k$ is holomorphic.

\begin{corol}
A slice regular function $f$ satisfying the hypothesis of the previous theorem has $k$ slice regular $\star$-kth roots.
\end{corol}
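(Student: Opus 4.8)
The plan is to exploit the equivalence between slice regularity of $f=\mathcal{I}(F)$ and holomorphy of the stem function $F$, and then to observe that the $k$ roots produced in Theorem~\ref{teo_kroots} arise as holomorphic lifts of a holomorphic map through a holomorphic covering.

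First I would recall that, by definition, $f$ is slice regular precisely when its stem function $F:\mathcal{U}\to\C\otimes\H$ is holomorphic; under the identification $\C\otimes\H\cong\C^4$ of Formula~\eqref{phimap} this is holomorphy in the classical sense of several complex variables. Thus the standing hypothesis supplies a genuinely holomorphic map $F:\mathcal{U}\to\Omega$. Next I would upgrade the covering of Theorem~\ref{teo_powercover} to an analytic statement: by Lemma~\ref{lmm_jacobianpower} the Jacobian $D\sigma_k(z)=k^2 N(z)^2[p_1^{k-1}(z_0,\vecnorm{z})]^2$ is nonzero exactly on $\Omega_k$, so $\sigma_k:\Omega_k\to\Omega$ is not merely a topological covering but a local biholomorphism. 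Consequently every local inverse of $\sigma_k$ is holomorphic.

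The key step is then to see that each lift $F_m$ constructed in the proof of Theorem~\ref{teo_kroots} inherits this holomorphy. By construction $F_m$ is the unique continuous lift of $F$ through the covering $\sigma_k$ with the prescribed value $F_m(x^0)=h^{-1}(q_m)$ (existence and uniqueness of this lift use that $\mathcal{U}$ is simply connected and $F(\mathcal{U})\subseteq\Omega$), and it was already shown to be a stem function. Around any point of $\mathcal{U}$ one may write $F_m=(\sigma_k)^{-1}\circ F$ for a suitable holomorphic local inverse of $\sigma_k$; hence $F_m$ is locally a composition of holomorphic maps and therefore holomorphic on all of $\mathcal{U}$. Since $F_m$ is a holomorphic stem function, the induced slice function $f_m=\mathcal{I}(F_m)$ is slice regular, and $(f_m)^{\star k}=f$ holds by the relation $F=\sigma_k\circ F_m$ recorded before Remark~\ref{remarkslice}.

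I do not expect a substantive obstacle here: once $\sigma_k$ is recognized as a local biholomorphism on $\Omega_k$, holomorphy of the lifts is automatic. The only point demanding a little care is checking that the local inverse branches of $\sigma_k$ used along $\mathcal{U}$ are bona fide holomorphic maps, which is exactly guaranteed by the non-vanishing of $D\sigma_k$ on $\Omega_k$; the composition argument then patches to a global holomorphy statement on the simply connected domain $\mathcal{U}$.
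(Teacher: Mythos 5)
Your proposal is correct and follows exactly the paper's (much terser) argument: the paper disposes of this corollary with the single remark that $\sigma_k$ is holomorphic, which is precisely your observation that the non-vanishing of $D\sigma_k$ on $\Omega_k$ (Lemma~\ref{lmm_jacobianpower}) makes $\sigma_k$ a local biholomorphism, so the continuous lifts $F_m$ of Theorem~\ref{teo_kroots} are automatically holomorphic stem functions and induce slice regular $\star$-roots. The only nitpick is your phrase ``nonzero exactly on $\Omega_k$'': the Jacobian is nonzero \emph{on} $\Omega_k$, which is all the argument needs, but it need not vanish at every point outside $\Omega_k$ (e.g.\ at points with $\vecnorm{z}=0$ but $z_0\neq 0$).
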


In the hypothesis of the previous theorem, given $F_m$ as above and given $\xi\in\C$ a primitive $k$-th rooth of unity, then 
$F_{m,a}$
$=\xi^a F_m$ is again a lift of $F$: $\sigma_k$ is $k$-homogeneous, so
$$\sigma_k(\xi^a F_m(z))=\xi^{ak}\sigma_k(F_m(z))=\sigma_k(F_m(z))=F(z)\;.$$
If $F_{m,a}(z)=F_{n,b}(z)$, then 
$$\xi^aF_m(z)=\xi^bF_n(z)$$
again, for $z=x^0$, we obtain that $F_m(x^0),\ F_n(x^0)\in\R^4$, therefore $\xi^{a-b}\in\R$. This is possible if $a=b$ or if $2(a-b)=k$ (and $k$ is even); in the first case, it follows that also $n=m$. So, for $k$ odd, the $k^2$ lifts given by Proposition~\ref{prplifts} are precisely the functions $F_{m,a}$, with
$F_m$ the lift of $F$ via $\sigma_k$ such that $F_m(x^0)=h^{-1}(q_m)$.

Moreover,
$$\overline{F_{m,a}(z)}=\overline{\xi^aF_m(z)}=\xi^{-a}\overline{F_m(z)}=\xi^{-a}F_m(\overline{z})=\xi^{-2a}F_{m,a}(\overline{z})\;.$$
Again, if $k$ is odd, the only possibility is $a=0$, so that we have $k$ families of $k$ functions each such that $G(\bar{z})=\xi^c\overline{G(z)}$ and each family has a different value of $c\in\{0,\ldots, k-1\}$.

We now want to show a couple of explicit examples suggesting the general form of such $k$-th $\star$-roots.
For the convenience of what follows we need to introduce a special slice regular function.
All the previous considerations will be deepen in the next section where some algebraic tool will be developed in order to treat the monodromy of
the $\star$-roots. At this stage we only add a couple of examples showing the different behaviours of the odd and even cases. Let us
begin with the following definition.

\begin{defin}\label{defcJ}
Let $U\subset\H$ be an open domain such that $U\cap\R=\emptyset$. We define the slice regular function $\mathcal{J}:U\to\H$ as
$J(q)=\frac{q_v}{||q_v||}$.
\end{defin}
The function $\cJ$ is slice preserving and it is induced by the function 
$$
J(z)=\begin{cases}
\sqrt{-1},& z\in\cU^+,\\
-\sqrt{-1},& z\in\cU^-,
\end{cases}
$$
corresponding to the locally constant complex curve
$$
z\mapsto
\begin{cases}
(\imath,0,0,0),& z\in\cU^+,\\
(-\imath,0,0,0),& z\in\cU^-.
\end{cases}
$$

From the function $\cJ$ it is possible to construct idempotent functions and zero-divisors for the $\star$-product. The prototypes of which are
the functions $\ell_+,\ell_-:U\to\H$ defined as 
$$
\ell_+(q)=\frac{1-\cJ(q)i}{2}\;,\qquad \ell_-(q)=\frac{1+\cJ(q)i}{2}\;.
$$
These two functions are such that $\ell_+\star\ell_+=\ell_+$ and $\ell_+\star\ell_-\equiv0$.

Thanks to the so called \emph{Peirce decomposition}, any slice regular function $f$ defined on a domain without real points $U$ can be written
as $f=f_+\star\ell_++f_-\star\ell_-$, where $f_+,f_-:U\to\H$ are slice regular functions and not zero divisors.
The case in which both $f_+$ and $f_-$ are regular polynomials is studied in some details in~\cites{AltavillaMATHZ} and corresponds
to the family of slice polynomial functions.

\begin{rem}
Let $\alpha+i\beta\in\C$, be a $k$-th rooth of $1$, then, clearly, for any $I\in\mathbb{S}$, we have that $(\alpha+I\beta)^{\star k}=(\alpha+I\beta)^{ k}=1$. Analogously, for any $q\in\H\setminus\R$ we have the following equality
$$
(\alpha+\cJ(q)\beta)^{\star k}=(\alpha+\cJ(q)\beta)^{k}\equiv 1\;.
$$
\end{rem}

\begin{ex}\label{exdeg3}
Let $\cU$ be a simply connected domain in $\C$ and
consider the regular polynomial function $f:U\to \H$ defined as
$f(q)=q^3+3q^2i-3q-i=(q+i)^{\star 3}$. This function preserves the slice $\C_i$, meaning that $f(U\cap \C_i)\subset \C_i$.
It corresponds to the complex curve $F:\cU\to\C^4$ given by $F(z)=(z^3-3z,3z^2-1,0,0)$. A trivial $3$-rd $\star$-root of
$f$ is given by $g:U\to\H$ defined as $g_0(q)=q+i$ corresponding to the complex curve $G_0:\cU\to\H$ defined as
$G_0(z)=(z,1,0,0)$. Moreover, if $\eta_1=-\frac{1}{2}+\frac{\sqrt{3}}{2}i,\eta_2=-\frac{1}{2}-\frac{\sqrt{3}}{2}i$ are the two non-trivial cubic root of
the unity in $\C_i$, then $g_1=g_0\eta_1$ and $g_2=g_0\eta_2$ are other cubic $\star$-roots of $f$,
corresponding to the complex curves $G_1(z)=(-z/2-\sqrt{3}/2,q\sqrt{3}/2-1/2,0,0)$ and $G_2(z)=(-z/2+\sqrt{3}/2,-q\sqrt{3}/2-1/2,0,0)$, respectively. 
If $U\cap \R=\emptyset$ it is 
possible to compute the remaining roots as follows

$$
\begin{matrix}
g_{0,1}(q)=\left(-\frac{1}{2}+\mathcal{J}(q)\frac{\sqrt{3}}{2}\right)g_0(q)\;, & & g_{0,2}(q)=\left(-\frac{1}{2}-\mathcal{J}(q)\frac{\sqrt{3}}{2}\right)g_0(q)\;,\\
g_{1,1}(q)=\left(-\frac{1}{2}+\mathcal{J}(q)\frac{\sqrt{3}}{2}\right)g_1(q)\;, & & g_{1,2}(q)=\left(-\frac{1}{2}-\mathcal{J}(q)\frac{\sqrt{3}}{2}\right)g_1(q)\;,\\
g_{2,1}(q)=\left(-\frac{1}{2}+\mathcal{J}(q)\frac{\sqrt{3}}{2}\right)g_2(q)\;, & & g_{2,2}(q)=\left(-\frac{1}{2}-\mathcal{J}(q)\frac{\sqrt{3}}{2}\right)g_2(q)\;,
\end{matrix}
$$
corresponding to the complex curves
$$
\begin{matrix}
G_{0,1}(z)=\begin{cases}
\eta_1 G_0(z)& z\in\cU^+\\
\eta_2 G_0(z)& z\in\cU^-
\end{cases}\;, & & G_{0,2}(z)=\begin{cases}
\eta_2 G_0(z)& z\in\cU^+\\
\eta_1 G_0(z)& z\in\cU^-
\end{cases}\;,\\
G_{1,1}(z)=\begin{cases}
\eta_1 G_1(z)& z\in\cU^+\\
\eta_2 G_1(z)& z\in\cU^-
\end{cases}\;, & & G_{1,2}(z)=\begin{cases}
\eta_2 G_1(z)& z\in\cU^+\\
\eta_1 G_1(z)& z\in\cU^-
\end{cases}\;,\\
G_{2,1}(z)=\begin{cases}
\eta_1 G_2(z)& z\in\cU^+\\
\eta_2 G_2(z)& z\in\cU^-
\end{cases}\;, & & G_{2,2}(z)=\begin{cases}
\eta_2 G_2(z)& z\in\cU^+\\
\eta_1 G_2(z)& z\in\cU^-
\end{cases}\;,
\end{matrix}
$$
respectively.
Notice that all these functions are slice polynomial functions. For instance we can write the first one as
$$
g_{0,1}(q)=g_0(q)\eta_1\star\ell_++g_0(q)\eta_2\star\ell_-\;.
$$
\end{ex}

The case $k$ even is more involved. For the moment we only give an example of what happens for $k=2$. In the next section we will try to 
explain this phenomenon from an algebraic point of view. 
\begin{ex}\label{exsubdolo}
Let us consider the function $F:\C\to\C^4(\simeq \C\otimes\H)$ defined by $F(z)=(z^2,-z,-z,1)$. This function intersects $V_\infty$ for $z=\pm \imath/\sqrt{2}$ (it corresponds to the slice regular function $f(q)=(q-i)\star (q-j)=q^2-q(i+j)+k$). We have four solutions of the equation $\sigma_2(G)=F$ around $z=\imath$, namely
$$G_1(z)=\frac{\sqrt{2}}{2}(\imath,\imath z,\imath z,-\imath)$$
$$G_2(z)=\frac{\sqrt{2}}{2}(-\imath,-\imath z,-\imath z,\imath)$$
and, fixing a determination of the square root of $2z^2+1$ on some open set,
$$G_3(z)=\left(\frac{\sqrt{4z^2+2}}{2},\frac{-z}{\sqrt{4z^2+2}}, \frac{-z}{\sqrt{4z^2+2}}, \frac{1}{\sqrt{4z^2+2}}\right)$$
$$G_4(z)=\left(-\frac{\sqrt{4z^2+2}}{2},\frac{z}{\sqrt{4z^2+2}}, \frac{z}{\sqrt{4z^2+2}}, -\frac{1}{\sqrt{4z^2+2}}\right)\;.$$
These lifts ``collide'' when $z$ approaches $\pm \imath$: if we pick the square root such that $\sqrt{-1/2}=\imath/\sqrt{2}$, then $G_1(\imath)=G_3(\imath)$ and $G_2(\imath)=G_4(\imath)$. 
Indeed, for $z=\pm \imath/\sqrt{2}$, $F(\pm \imath/\sqrt{2})\notin \Omega'$, where we do not have $4$ square roots, but only $2$.

Notice that since $G_m(\overline{z})=\overline{G_m(z)}$, for $m=3,4$, then $G_3$ and $G_4$ correspond to stem functions 
but they are not defined on $\C$: we can extend $G_3$ and $G_4$ to any simply connected open domain $\mathcal{V}$ which does not contain $\pm \imath/\sqrt{2}$.
If $V=\pi(\mathcal{V})$, then the slice regular functions $g_3:V\to\H$ and $g_4:V\to \H$ induced by $G_3$ and $G_4$ can be written as
$$
g_3(q)=\sqrt{q^2+\frac{1}{2}}-\frac{q}{\sqrt{4q^2+2}}(i+j)+\frac{k}{\sqrt{4q^2+2}},\quad g_4(q)=-g_3(q),
$$
respectively, where the existence of the square root of $4q^2+2$ is implicitly 
implied by our assumptions.

The phenomenon of reduced solutions (from $4$ to $2$) is not due to the fact that some preimages collide when $z$ approaches $\pm \imath/\sqrt{2}$ (this is what happens at the points where $D\sigma_k$ is not a local diffeomorphism), but rather to the fact that some preimages go to infinity, ``disappearing'', i.e. the limit of $G_3$ and $G_4$, for $z\to\pm\imath\sqrt{2}$, goes to infinity . This is an indication of the fact that $\sigma_k$ is not proper from $\C^4\to\C^4$, but from $\C^4\setminus \Omega'$ to $\C^4\setminus \Omega'$.

Lastly, as in the previous example,
notice that $G_1(\bar z)=\overline{G_2(z)}$, hence, if $\mathcal{V}$ has no real points, then
it is possible to construct the other two square roots of $f$ as explained in the proof of Theorem~\ref{casosenzareali}. 
In particular, the resulting slice regular functions are slice polynomial functions:
$$g_1(q)=-R(q)\star\ell_+ +R(q)\star \ell_-\;,\quad g_2(q)=-g_1(q)\;,$$
where $R(q)=\frac{\sqrt{2}}{2}[q(1+k)-i+j]$.
\end{ex}

\section{Covering automorphisms and monodromy}\label{secmonodromy}

In this section we reinterpret the results given in the previous one in terms of covering automorphisms, obtaining
more refined statements and the monodromy of the maps $\mathfrak{s}_k$ and $\sigma_k$. This will allow us to give a more precise description of what happen in explicit cases.

Let us begin by considering the following commutative diagram
$$
\begindc{\commdiag}[3]
\obj(0,180)[A]{$W_k$}
\obj(280,180)[B]{$W$}
\obj(0,0)[C]{$\Omega_k$}
\obj(280,0)[D]{$\Omega$}
\mor{A}{B}{$\mathfrak{s}_k$}[\atleft,\solidarrow]
\mor{A}{C}{$\rho$}[\atright,\solidarrow]
\mor{C}{D}{$\sigma_k$}[\atright,\solidarrow]
\mor{B}{D}{$\rho$}[\atleft,\solidarrow]
\enddc
$$
where $\rho$ is the map defined in Formula~\eqref{rho} and each arrow is a covering map: more precisely, $\rho$ is a $2$-to-$1$ covering map and $\sigma_k$, $\mathfrak{s}_k$ are $k^2$-to-$1$ covering maps.

Given a covering map $\pi:X\to Y$, we will denote by $\Aut_\pi$ the group of automorphisms $f$ of $X$ such that $\pi\circ f=\pi$.

$$
\begindc{\commdiag}[3]
\obj(0,180)[A]{$X$}
\obj(280,180)[B]{$X$}
\obj(140,0)[C]{$Y$}
\mor{A}{B}{$f$}[\atleft,\solidarrow]
\mor{A}{C}{$\pi$}[\atright,\solidarrow]
\mor{B}{C}{$\pi$}[\atleft,\solidarrow]
\enddc
$$

Let us now define the following function.
\begin{defin}

Let $\Gamma:\C^2\times\mathcal{S}\to\C^2\times\mathcal{S}$ be given by $$\Gamma((u_0,u_1),s)=((u_0,-u_1),-s)$$.
\end{defin}

\begin{rem}\label{gamma}
We have that
$$\Aut_\rho=\{1,\ \Gamma\}\;,$$
where we denoted by $1$ the identity automorphism.
\end{rem}

We define two actions of $\C$ on $\C^2$, as the multiplication by the following two matrices
$$z\mapsto z\mathrm{Id}\qquad z\mapsto A_z$$
where $z\in\C$, $\mathrm{Id}$ is the $2\times 2$ identity matrix and
$$A_z=\begin{pmatrix}\Re z&-\Im z\\\Im z&\Re z\end{pmatrix}\,.$$
We denote by $U_k$ the group of complex $k$-th roots of unity. From now on we will only consider maps from $\C^2\times\mathcal{S}$ to itself of the form $((u_0,u_1),s)\mapsto((g_0(u_0,u_1), g_1(u_0,u_1)), s)$, so we will consistently omit the component relative to $\mathcal{S}$.

\begin{rem}
Let $f:U\to\H$ be a slice regular function. Then if $f=f_0+f_1i+f_2j+f_3k=f_0+f_v$,
we have that the corresponding complex curve is given by
$F(z)=(f_0(z),f_1(z),f_2(z),f_3(z))=(f_0,\vecpart{f_v})$. Assume that $F(\cU)\subset\Omega$, then its lifts via $\rho$ is given by
$$\left((f_0,\sqrt{f_v^\mathsf{s}}),\frac{f_v}{\sqrt{f_v^\mathsf{s}}}\right)\in\C^2\times\mathcal{S}\;.$$
\end{rem}

In the next proposition we compute the monodromy of $\mathfrak{s}_k$ in the case in which $k$ is odd.
\begin{propos}\label{propautodd}
Let $k$ be odd. Then
$$\Aut_{\mathfrak{s}_k}=\{\xi A_\eta\ :\ \xi,\eta\in U_k\}\;,$$
which is isomorphic to the group $\mathbb{Z}_k\times\mathbb{Z}_k$.
\end{propos}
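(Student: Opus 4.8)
The plan is to diagonalize $\mathfrak{s}_k$ by passing to the coordinates $v_\pm=u_0\pm\imath u_1$ on the $\C^2$-factor. Starting from $p_0^k(t,1)=\tfrac12((t+\imath)^k+(t-\imath)^k)$ and $p_1^{k-1}(t,1)=\tfrac1{2\imath}((t+\imath)^k-(t-\imath)^k)$ and using homogeneity, one gets $p_0^k(u_0,u_1^2)=\tfrac12((u_0+\imath u_1)^k+(u_0-\imath u_1)^k)$ and $u_1p_1^{k-1}(u_0,u_1^2)=\tfrac1{2\imath}((u_0+\imath u_1)^k-(u_0-\imath u_1)^k)$. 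A direct computation then shows that, in the coordinates $(v_+,v_-)$, the map $\mathfrak{s}_k$ becomes the \emph{decoupled} map $(v_+,v_-)\mapsto(v_+^k,v_-^k)$, acting as the identity on $\mathcal{S}$ (so that, as noted in the excerpt, every deck transformation fixes the $\mathcal{S}$-component). At the same time I would transport the domains: since $\underline{z}^2=u_1^2$, the defining inequalities of $\Omega_k$ and $\Omega$ translate into $W_k=\{v_+v_-\neq0,\ v_+^k\neq v_-^k,\ v_++v_-\neq0\}\times\mathcal{S}$ and $W=\{V_+V_-\neq0,\ V_+\neq V_-,\ V_++V_-\neq0\}\times\mathcal{S}$, where $V_\pm$ are the target coordinates and $V_+=v_+^k$, $V_-=v_-^k$.

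Next I would exhibit the candidate automorphisms $\psi_{\alpha,\beta}(v_+,v_-)=(\alpha v_+,\beta v_-)$ for $\alpha,\beta\in U_k$, which visibly satisfy $\mathfrak{s}_k\circ\psi_{\alpha,\beta}=\mathfrak{s}_k$ and compose as $\psi_{\alpha,\beta}\circ\psi_{\alpha',\beta'}=\psi_{\alpha\alpha',\beta\beta'}$. The only nontrivial point is that $\psi_{\alpha,\beta}$ preserves $W_k$: the conditions $v_+v_-\neq0$ and $v_+^k\neq v_-^k$ are invariant, while if $\alpha v_++\beta v_-=0$, raising to the $k$-th power and using $\alpha^k=\beta^k=1$ together with $k$ odd (so $(-1)^k=-1$) yields $v_+^k+v_-^k=0$; but $\mathfrak{s}_k(W_k)\subseteq W$ forces $V_++V_-=v_+^k+v_-^k\neq0$, a contradiction. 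This is precisely where the parity hypothesis enters. Hence $\{\psi_{\alpha,\beta}:\alpha,\beta\in U_k\}$ is a subgroup of $\Aut_{\mathfrak{s}_k}$ of order $k^2$, isomorphic to $U_k\times U_k$.

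To show there is nothing else, I would observe that $W_k$ is connected, being the complement in $(\C^*)^2$ of finitely many complex hypersurfaces (real codimension $2$) times the connected affine quadric $\mathcal{S}$. On a connected covering the deck group acts freely on each fibre, so $|\Aut_{\mathfrak{s}_k}|\le k^2$; combined with the previous step this gives equality, so the covering is regular and $\Aut_{\mathfrak{s}_k}=\{\psi_{\alpha,\beta}:\alpha,\beta\in U_k\}\cong\Z_k\times\Z_k$.

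Finally I would translate back to the coordinates $(u_0,u_1)$. A short computation shows $\psi_{\alpha,\beta}$ is the linear map with matrix having diagonal entries $\tfrac{\alpha+\beta}{2}$ and off-diagonal entries $\pm\tfrac{\imath(\alpha-\beta)}{2}$; setting $\alpha=\xi\eta$, $\beta=\xi\eta^{-1}$ and using $\eta^{-1}=\overline{\eta}$ for $\eta\in U_k$ turns this matrix into exactly $\xi A_\eta$. Since $k$ is odd, squaring is a bijection of $U_k$, so $(\xi,\eta)\mapsto(\xi\eta,\xi\eta^{-1})$ is a bijection of $U_k\times U_k$; therefore $\{\xi A_\eta:\xi,\eta\in U_k\}$ coincides with $\{\psi_{\alpha,\beta}\}$, giving the stated description. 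The main obstacle is organizational rather than deep: correctly carrying the three defining inequalities of $W_k$ and $W$ through the change of variables and checking the bijection $(\xi,\eta)\leftrightarrow(\alpha,\beta)$, both of which depend essentially on $k$ being odd.
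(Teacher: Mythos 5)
Your route is genuinely different from the paper's and, in its main mechanism, more transparent. The paper never diagonalizes: it checks directly that $(\xi,\eta)\mapsto\xi A_\eta$ is a homomorphism into the $2\times 2$ complex matrices, proves injectivity by taking determinant and trace of the equation $\xi A_\eta=\mathrm{Id}$ (this is where it uses $k$ odd), verifies $\xi\,\mathrm{Id},\ A_\eta\in\Aut_{\mathfrak{s}_k}$ via $k$-homogeneity and the observation that $A_\eta$ implements multiplication by $\eta$ on $x+\sqrt{-1}y$, and closes with the same cardinality bound $|\Aut_{\mathfrak{s}_k}|\le k^2$ that you use. Your change of coordinates $(v_+,v_-)=(u_0+\imath u_1,u_0-\imath u_1)$, under which $\mathfrak{s}_k$ becomes $(v_+,v_-)\mapsto(v_+^k,v_-^k)$ and the candidate deck group becomes the diagonal action of $U_k\times U_k$, makes the group structure and the count of deck transformations obvious, correctly isolates the role of parity in the bijectivity of $(\xi,\eta)\mapsto(\xi\eta,\xi\eta^{-1})$, and, unlike the paper, makes explicit the connectedness of $W_k$, without which the bound $|\Aut_{\mathfrak{s}_k}|\le\deg\mathfrak{s}_k$ is not valid (deck transformations act freely on fibres only when the covering space is connected); the paper leaves this point silent.

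There is, however, one step that fails as written: the assertion $\mathfrak{s}_k(W_k)\subseteq W$, which you invoke to show that $\psi_{\alpha,\beta}$ preserves $W_k$. Your translation of the paper's definitions is correct, $W_k=\{v_+v_-\neq0,\ v_+^k\neq v_-^k,\ v_++v_-\neq0\}\times\mathcal{S}$, but this set is \emph{not} mapped into $W$: for $k=3$ the point $v_+=e^{\imath\pi/3}$, $v_-=1$ satisfies all three conditions, yet $v_+^3+v_-^3=0$, so its image violates $V_++V_-\neq0$. The same point shows that what you are trying to prove is false for the literal $W_k$: applying $\psi_{\alpha,\beta}$ with $\alpha=e^{2\imath\pi/3}$, $\beta=1$ sends it to $(-1,1)$, which has $v_++v_-=0$, so $\psi_{\alpha,\beta}$ (equivalently, the paper's $\xi A_\eta$, which ignores domain preservation entirely) does not map $W_k$ into itself. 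This is not something you could have repaired by arguing more carefully inside your proof: it is a glitch in the paper's own setup, since already Theorem \ref{teo_powercover} presupposes $\sigma_k(\Omega_k)\subseteq\Omega$, which fails for $\Omega_k=\Omega'_k\cap\Omega$. Both the proposition and your argument become correct once $\Omega_k$ is replaced by $\sigma_k^{-1}(\Omega)$, that is, once the condition $v_++v_-\neq0$ in your description of $W_k$ is replaced by $v_+^k+v_-^k\neq0$; with this correction your preservation step becomes immediate (all three defining conditions are manifestly invariant under the diagonal action of $U_k\times U_k$, with no parity hypothesis needed there), connectedness survives, and the only genuine use of $k$ odd is in your final identification of $\{\psi_{\alpha,\beta}\ :\ \alpha,\beta\in U_k\}$ with $\{\xi A_\eta\ :\ \xi,\eta\in U_k\}$.
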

\begin{proof}
The map $(\xi,\eta)\mapsto \xi A_\eta$ is a group homomorphism from $U_k\times U_k$ to $M_{2,2}(\C)$, indeed,
$$(\xi A_\eta)(\xi' A_{\eta'})=(\xi\xi')A_{\eta\eta'}\;.$$
Now, its kernel is given by the condition $\xi A_\eta=\mathrm{Id}$, which implies (taking the determinant)
$$\xi^2|\eta|^2=1$$
i.e. $\xi^2=1$, i.e. $\xi=\pm 1$, but as $k$ is odd, then $\xi=1$. 
Moreover, taking the trace, we obtain $2\Re\eta=2$, i.e. $\Re\eta=1$, which implies $\eta=1$. Hence the kernel is trivial and the map is injective.

Finally, as $\mathfrak{s}_k$ is $k$-homogeneous in the $\C^2$ component, it is obvious that $\xi \mathrm{Id}\in\Aut_{\mathfrak{s}_k}$ for all $\xi\in U_k$; on the other hand, 
$$\mathfrak{s}_k((u_0,u_1),s)=((p_0^k(u_0,u_1^2), u_1p_1^{k-1}(u_0,u_1^2)),s)$$
and by definition $(x+\sqrt{-1}y)^k=p_0^k(x,y^2)+\sqrt{-1}yp_1^{k-1}(x,y^2)$, so, from the fact that 
\begin{align*}
    ((\Re(\eta)x-\Im(\eta)y)+\sqrt{-1}(\Im(\eta)x+\Re(\eta)y))^k&=((\Re(\eta)+\sqrt{-1}\Im(\eta))(x+\sqrt{-1}y))^k\\
    &=((x+\sqrt{-1}y))^k
\end{align*}
for $\eta\in U_k$, it follows that $A_\eta\in\Aut_{\mathfrak{s}_k}$.

Given that $U_k\times U_k$ is generated by $(\xi,1)$ and $(1,\eta)$, we obtain that $(\xi,\eta)\to \xi A_\eta$ is an injective homomorphism from $U_k\times U_k$ to $\Aut_{\mathfrak{s}_k}$. However, the latter has at most $k^2$ elements (as $\mathfrak{s}_k$ has degree $k^2$), so the map is an isomorphism.
\end{proof}

We now give an abstract example generalizing Example~\ref{exdeg3}. Before providing it, we recall from~\cite[Proposition 2.10]{AltavillaPAMS} that two slice regular functions $f=f_0+f_v, g=g_0+g_v$ commute with respect to the $\star$-product if and only if there exist two slice preserving functions $\alpha$ and $\beta$ not both identically zero such that $\alpha f_v+\beta g_v\equiv 0$.

\begin{ex}
Let $\cU$ be a simply connected domain such that $\cU\cap\R=\emptyset$
and $f:U\to\H$
be a slice regular function such that $f=f_0+f_v$.
Then clearly $f$ is a cubic $\star$-root of $f^{\star 3}$. By following the procedure of Example~\ref{exdeg3}
and the presentation of $\Aut_{\mathfrak{s}_k}$ given in Proposition~\ref{propautodd} we compute the other $8$ 
roots as follows.
$$
\begin{array}{rrr}
f\;, & f\star\left(-\frac{1}{2}+\frac{\sqrt{3}}{2}\frac{f_v}{\sqrt{f_v^s}}\right)\;, & f\star\left(-\frac{1}{2}-\frac{\sqrt{3}}{2}\frac{f_v}{\sqrt{f_v^s}}\right)\;,\\
\left(-\frac{1}{2}+\cJ\frac{\sqrt{3}}{2}\right)f\;, & \left(-\frac{1}{2}+\cJ\frac{\sqrt{3}}{2}\right)f\star\left(-\frac{1}{2}+\frac{\sqrt{3}}{2}\frac{f_v}{\sqrt{f_v^s}}\right)\;, & \left(-\frac{1}{2}+\cJ\frac{\sqrt{3}}{2}\right)f\star\left(-\frac{1}{2}-\frac{\sqrt{3}}{2}\frac{f_v}{\sqrt{f_v^s}}\right)\;,\\
\left(-\frac{1}{2}-\cJ\frac{\sqrt{3}}{2}\right)f\;, & \left(-\frac{1}{2}-\cJ\frac{\sqrt{3}}{2}\right)f\star\left(-\frac{1}{2}+\frac{\sqrt{3}}{2}\frac{f_v}{\sqrt{f_v^s}}\right)\;, & \left(-\frac{1}{2}-\cJ\frac{\sqrt{3}}{2}\right)f\star\left(-\frac{1}{2}-\frac{\sqrt{3}}{2}\frac{f_v}{\sqrt{f_v^s}}\right)\;,\\
\end{array}
$$
where $\cJ=\cJ(q)$ is the slice regular function given in Definition~\ref{defcJ}.
Notice that, thanks to the previous consideration on the commutativity of the $\star$-product, the factors in all the functions presented above commute.
All these functions can be computed by letting the group $\Aut_{\mathfrak{s}_k}$ presented in Proposition~\ref{propautodd} act on the element
$$\left((f_0,\sqrt{f_v^\mathsf{s}}),\frac{f_v}{\sqrt{f_v^\mathsf{s}}}\right)\in\C^2\times\mathcal{S}\;.$$

In detail, using the notation of Proposition \ref{propautodd}, the action of $\xi$ corresponds to the (left) multiplication by a complex number and the action of $A_\eta$ corresponds to the right $\star$-multiplication.
\end{ex}

\begin{rem}
The previous example shows in practice how to compute all the $k$-th $\star$-roots of a slice regular function $g$, from a given one $f$, whenever $\frac{f_v}{\sqrt{f_v^s}}$ is well defined. In fact, if $\alpha +i\beta$ is a complex 
$k$-th root of $1$, then all the other $k$-th
$\star$-roots of $g$ can be computed as
$f*\left(\alpha+\beta\frac{f_v}{\sqrt{f_v^s}}\right)$ 
if the domain intersects the real axis, or as
$(\alpha+\mathcal{J}\beta)f*\left(\alpha+\beta\frac{f_v}{\sqrt{f_v^s}}\right)$ otherwise.
\end{rem}

We now turn our attention to the case when $k$ is even which, as we saw in Example~\ref{exsubdolo}, is more subtle.

\begin{rem}
If $k$ is even, when computing the kernel of $(\xi,\eta)\mapsto \xi A_\eta$, we obtain
$$\{(1,1),(-1,-1)\}$$
so the map is not injective and its image is just an index $2$ subgroup of $\Aut_{\mathfrak{s}_k}$.
\end{rem}
We notice that if $\lambda,\mu\in U_{2k}$ are primitive $2k$-th roots of $1$, then $\lambda^k=\mu^k=-1$, so 
$$(\lambda A_{\mu})^k=\lambda^k A_{\mu^k}=-1(-\mathrm{Id})=\mathrm{Id}\;.$$
Thanks to this observation we are able to compute the monodromy of $\mathfrak{s}_k$ when $k$ is even.

\begin{propos}

Let $k$ be even and $\lambda,\mu\in U_{2k}$ be primitive roots. If we set
$\Zhe=\lambda A_\mu$, then
$$\Aut_{\mathfrak{s}_k}=\{\xi A_\eta\Zhe^\delta\ :\ (\xi,\eta,\delta)\in U_k\times U_k\times\{0,1\}\}\;,$$
which is isomorphic $\Z_k\times \Z_k$.
\end{propos}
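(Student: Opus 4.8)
The plan is to follow the strategy of Proposition~\ref{propautodd}, correcting for the failure of $(\xi,\eta)\mapsto \xi A_\eta$ to be injective when $k$ is even. First I would record that the inclusion $\{\xi A_\eta : \xi,\eta\in U_k\}\subseteq \Aut_{\mathfrak{s}_k}$ holds verbatim as in the odd case: the verifications that $\xi\,\mathrm{Id}\in\Aut_{\mathfrak{s}_k}$ (by $k$-homogeneity of $\mathfrak{s}_k$ in the $\C^2$-factor) and that $A_\eta\in\Aut_{\mathfrak{s}_k}$ (from the identity $(x+\sqrt{-1}y)^k=p_0^k(x,y^2)+\sqrt{-1}y\,p_1^{k-1}(x,y^2)$ together with $\eta^k=1$) never used the parity of $k$. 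The same computation shows more: a linear map of the form $c\,A_\omega$, with $c,\omega\in\C^\ast$, multiplies $(u_0+\sqrt{-1}u_1)^k$ by $(c\omega)^k$ and its $\sqrt{-1}$-conjugate $(u_0-\sqrt{-1}u_1)^k$ by $(c\omega^{-1})^k$, so $c\,A_\omega\in\Aut_{\mathfrak{s}_k}$ precisely when $(c\omega)^k=(c\omega^{-1})^k=1$. Applying this to $S=\lambda A_\mu$ with $\lambda,\mu\in U_{2k}$ primitive gives $(\lambda\mu)^k=\lambda^k\mu^k=(-1)(-1)=1$ and $(\lambda\mu^{-1})^k=\lambda^k\mu^{-k}=1$, so $S\in\Aut_{\mathfrak{s}_k}$.

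Next I would locate $S$ relative to $H:=\{\xi A_\eta:\xi,\eta\in U_k\}$. By the remark preceding the statement, the homomorphism $(\xi,\eta)\mapsto \xi A_\eta$ has kernel $\{(1,1),(-1,-1)\}$, so $H$ is a subgroup of order $k^2/2$. To separate $S$ from $H$ I would use the determinant: since $\det A_\omega=|\omega|^2=1$ for $\omega$ a root of unity and $\det(cM)=c^2\det M$ in dimension $2$, one has $\det(\xi A_\eta)=\xi^2$ and $\det S=\lambda^2$. Thus $\det(H)=\{\xi^2:\xi\in U_k\}$ is the set of squares in $U_k$, an index-$2$ subgroup because $k$ is even, whereas $\det S=\lambda^2$ is a primitive $k$-th root of unity and hence not a square in $U_k$. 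Therefore $S\notin H$, the coset $SH$ is disjoint from $H$, and $H\sqcup SH=\{\xi A_\eta S^\delta:\delta\in\{0,1\}\}$ has exactly $k^2$ elements. Since $\mathfrak{s}_k$ has degree $k^2$, its group of deck transformations has order at most $k^2$; combined with the inclusion $H\sqcup SH\subseteq\Aut_{\mathfrak{s}_k}$ this forces equality, giving $\Aut_{\mathfrak{s}_k}=\{\xi A_\eta S^\delta:(\xi,\eta,\delta)\in U_k\times U_k\times\{0,1\}\}$.

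Finally, for the isomorphism type I would exhibit two commuting generators of order $k$ whose cyclic subgroups meet only in the identity. Fix a primitive $\eta_0\in U_k$; then $A_{\eta_0}$ has order $k$, since $A_{\eta_0}^m=A_{\eta_0^m}=\mathrm{Id}$ iff $k\mid m$, and $S$ has order $k$, since $S^k=\mathrm{Id}$ by the remark preceding the statement while $\det S=\lambda^2$ is a primitive $k$-th root, forcing the order of $S$ to be a multiple of $k$. They commute because $\lambda\,\mathrm{Id}$ is central and $A_{\eta_0}A_\mu=A_{\eta_0\mu}=A_\mu A_{\eta_0}$. Their cyclic subgroups intersect trivially: a common element $A_{\eta_0}^a=S^b$ has determinant $1$ on the left and $\lambda^{2b}$ on the right, so $\lambda^{2b}=1$, whence $k\mid b$ and $S^b=\mathrm{Id}$, and then $A_{\eta_0}^a=\mathrm{Id}$ as well. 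Consequently $\langle A_{\eta_0}\rangle\times\langle S\rangle$ is a subgroup of order $k^2$ inside the order-$k^2$ group $\Aut_{\mathfrak{s}_k}$, so $\Aut_{\mathfrak{s}_k}=\langle A_{\eta_0}\rangle\times\langle S\rangle\cong\Z_k\times\Z_k$.

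The main obstacle is the even parity itself: it is exactly what makes $(\xi,\eta)\mapsto\xi A_\eta$ two-to-one, so the whole point is to manufacture a genuine automorphism $S$ lying outside $H$ and to certify both that $S$ really is a deck transformation (the condition $\lambda^k\mu^k=1$, which is special to $\lambda,\mu$ being $2k$-th rather than $k$-th roots) and that it escapes $H$ (its determinant lands in the nontrivial coset of the squares). Once these two points are in place, everything reduces to bookkeeping inside a finite abelian group of known order.
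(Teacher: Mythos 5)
Your proof is correct, and it shares the paper's overall skeleton---exhibit $k^2$ distinct deck transformations, then invoke the bound $|\Aut_{\mathfrak{s}_k}|\le k^2$ coming from the degree of the covering---but both key steps are executed by genuinely different means. For the counting, the paper factors $(\xi,\eta,\delta)\mapsto\xi A_\eta S^\delta$ through the $2$-to-$1$ homomorphism $U_{2k}\times U_{2k}\ni(\alpha,\beta)\mapsto\alpha A_\beta$ via the injection $(\xi,\eta,\delta)\mapsto(\xi\lambda^\delta,\eta\mu^\delta)$, and concludes that the image has $k^2$ elements; you instead get $|H|=k^2/2$ from the kernel computation and then separate $S$ from $H$ with the determinant ($\det(\xi A_\eta)=\xi^2$ is a square in $U_k$, while $\det S=\lambda^2$ is a primitive $k$-th root of unity, hence not a square since $k$ is even). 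Both accountings are valid; your determinant test is arguably more transparent, and your explicit criterion $(c\omega)^k=(c\omega^{-1})^k=1$ for $cA_\omega\in\Aut_{\mathfrak{s}_k}$ spells out what the paper dismisses as ``clearly contained''. The more significant divergence is in identifying the group structure: the paper picks generators $\xi\,\mathrm{Id}$, $A_\xi$, $\mu A_\mu$ (with $\mu^2=\xi$) and asserts the presentation $\langle a,b,c\mid a^k,\,b^k,\,c^2(ab)^{-1},\,[a,b],\,[a,c],\,[b,c]\rangle$; as written, that presentation actually defines an abelian group of order $2k^2$, and one must add the relation $c^k=1$ (which does hold, since $(\mu A_\mu)^k=\mu^k A_{\mu^k}=\mathrm{Id}$, as the paper itself observes just before the proposition) to cut the order down to $k^2$. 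Your route---two commuting elements $A_{\eta_0}$ and $S$, each of order exactly $k$, whose cyclic subgroups meet trivially by the same determinant trick, so that $\langle A_{\eta_0}\rangle\times\langle S\rangle\cong\Z_k\times\Z_k$ exhausts the order-$k^2$ group---avoids presentations altogether and is, on this point, tighter than the paper's own argument.
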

\begin{proof}
The image of the map $(\xi,\eta,\delta)\mapsto \xi A_\eta \Zhe^\delta$ is clearly contained in $\Aut_{\mathfrak{s}_k}$.

This map from $U_k\times U_k\times \Z_2$ to $\Aut_{\mathfrak{s}_{2k}}$ is not a group homomorphism, however, we can factor it through the map from $U_{2k}\times U_{2k}\to \Aut_{\mathfrak{s}_{2k}}$, by sending (injectively) $(\xi,\eta,\delta)$ to $(\xi\lambda^\delta,\eta\mu^\delta)$, so the map $(\xi,\eta,\delta)\mapsto \xi A_\eta \Zhe^\delta$ is a $2$-to-$1$ map.

Again, by cardinality, we conclude that
$$(\xi,\eta,\delta)\mapsto \xi A_\eta \Zhe^\delta$$
is surjective from $U_k\times U_k\times \Z_2$ onto $\Aut_{\mathfrak{s}_k}$.

If now $\xi \in U_k,\ \mu\in U_{2k}$ are primitive roots of unity (of orders $k$ and $2k$ respectively) such that $\mu^2=\xi$, then $\Aut_{\mathfrak{s}_k}$ is generated by
$$\xi, A_{\xi},\ \mu A_{\mu}\;.$$
Moreover, $\xi^k=A_{\xi}^k=1$, $(\mu A_{\mu})^2=\xi A_{\xi}$, so the group $\Aut_{\mathfrak{s}_k}$ can be presented as
$$\langle a,b,c\ \vert\ a^k,\ b^k,\ c^2(ab)^{-1},\ [a,b],\ [a,c],\ [b,c]\rangle$$
which is isomorphic $\Z_k\times \Z_k$ as in the case $k$ odd.
\end{proof}

\begin{ex}
Going back to Example \ref{exsubdolo}, we notice that $U_2=\Z_2$ and, in fact, $g_1=-g_2$; moreover, the primitive roots of $1$ of order $4$ are $\pm \iota$, so that
$$g_3(q)=\mathcal{J}(q)g_1(q)\star \mathcal{J}(g_1(q))$$
and, accordingly, $g_4=-g_3$.
\end{ex}

From the theory of covering maps,
as the group of automorphism of $\mathfrak{s}_k$ acts transitively on the fibers of $\mathfrak{s}_k$, then we can state the following result.

\begin{corol}
The covering map $\mathfrak{s}_k$ is normal (or regular, or Galois).
\end{corol}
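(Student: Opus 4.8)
The plan is to invoke the standard characterization from covering space theory: a covering map $\pi\colon X\to Y$ with $X$ connected is normal (equivalently regular, or Galois) precisely when the deck transformation group $\Aut_\pi$ acts transitively on the fibers of $\pi$. Since all the structural work has already been carried out in the two preceding propositions, the proof reduces to a counting argument, exactly as suggested by the sentence preceding the statement.

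First I would recall that $\mathfrak{s}_k$ is a $k^2$-to-$1$ covering map, in parallel with Proposition~\ref{prp_preimages}, so that every fiber $\mathfrak{s}_k^{-1}(y)$ consists of exactly $k^2$ points, i.e.\ $\deg\mathfrak{s}_k=k^2$. Next, from the two propositions computing $\Aut_{\mathfrak{s}_k}$ — treating the cases $k$ odd and $k$ even separately — we know that $\Aut_{\mathfrak{s}_k}\cong\Z_k\times\Z_k$, and in particular that $|\Aut_{\mathfrak{s}_k}|=k^2$.

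Then I would use the fundamental rigidity property of deck transformations: over a connected total space, any element of $\Aut_{\mathfrak{s}_k}$ that fixes a single point must be the identity, so the action of $\Aut_{\mathfrak{s}_k}$ on any fiber is free. A free action of a group of order $k^2$ on a set of $k^2$ elements has a single orbit, hence is transitive. Applying the characterization above, $\mathfrak{s}_k$ is a normal covering.

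The step I expect to require the most care is the freeness/transitivity bridge, namely making the cardinality bookkeeping airtight: one must confirm that $W_k$ is connected, so that the rigidity of deck transformations applies and the general inequality $|\Aut_{\mathfrak{s}_k}|\le\deg\mathfrak{s}_k$ is saturated \emph{exactly} when the action is transitive. Once connectivity is in hand, the equality $|\Aut_{\mathfrak{s}_k}|=k^2=\deg\mathfrak{s}_k$ forces transitivity with no further computation, and normality follows immediately.
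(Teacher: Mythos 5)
Your proof is correct and is essentially the paper's own argument: the paper deduces normality from the transitivity of the deck action on fibers, and since the preceding propositions give $\Aut_{\mathfrak{s}_k}\cong\Z_k\times\Z_k$ of order $k^2$ equal to the degree of $\mathfrak{s}_k$, that transitivity is exactly the free-action counting you spell out. The connectivity of $W_k$ that you flag is the only point to verify, and it is routine: $W_k$ is the complement of a proper analytic subset (cut out by the polynomial conditions $u_1\neq 0$, $u_0\neq 0$, $u_0^2+u_1^2\neq 0$, $p_1^{k-1}(u_0,u_1^2)\neq 0$) of the connected complex manifold $(\C^2\setminus\{u_1=0\})\times\mathcal{S}$, hence connected.
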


Knowing the automorphisms of $\mathfrak{s}_k$, we now pass to analyze $\sigma_k$.

Given $w\in \Omega$, the fiber $\rho^{-1}(w)$ consists of two points $v$ and $\Gamma(v)$ in $W$, where $\Gamma$ is the function defined in Remark~\ref{gamma}; $\rho$ is then a bijection from $\mathfrak{s}_k^{-1}(v)$ to $\sigma_k^{-1}(w)$ and from $\mathfrak{s}_k^{-1}(\Gamma(v))$ to the same set. As $\mathfrak{s}_k$ is normal, the elements of $\Aut_{\mathfrak{s}_k}$ are uniquely defined by their action on $\mathfrak{s}_k^{-1}(v)$, so that each of them corresponds to a different element of the group of permutation of $\mathfrak{s}^-1(v)$ denoted by $\mathrm{Perm}(\mathfrak{s}_k^{-1}(v))$, which is isomorphic, via $\rho$, to $\mathrm{Perm}(\sigma_k^{-1}(w))$.

Given a neighborhood $U$ of $w$ in $\Omega$, which is uniformly covered by $\sigma_k$, we can extend each deck transformation of $\sigma_k^{-1}(w)$ to an automorphism of $\sigma_k^{-1}(U)$ (which is diffeomorphic to the disjoint union of $k^2$ copies of $U$). We can suppose (up to shrinking) that $U$ is also uniformly covered by $\rho$ and that, in turn, each connected component of $\rho^{-1}(U)$ is uniformly covered by $\mathfrak{s}_k$.

The restriction of the automorphisms in $\Aut_{\mathfrak{s}_k}$ to the connected components of $\mathfrak{s}_k^{-1}(\rho^{-1}(U))$ correspond, via $\rho$, to the extension of the deck transformations of $\sigma_k^{-1}(w)$ to $\sigma_k^{-1}(U)$.

\begin{propos}The group $\Aut_{\sigma_k}$ is isomorphic to $\Z_k$. In particular, $\sigma_k$ is never regular.\end{propos}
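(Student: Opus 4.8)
The plan is to identify $\Aut_{\sigma_k}$ with the subgroup of $\Aut_{\mathfrak{s}_k}$ made of those automorphisms compatible with the double cover $\rho$, and then to compute that subgroup. Set $C=\{\phi\in\Aut_{\mathfrak{s}_k}\ :\ \phi\circ\Gamma=\Gamma\circ\phi\}$, with $\Gamma$ the generator of $\Aut_\rho$ from Remark~\ref{gamma}. The first step is to show that every $\phi\in C$ descends through $\rho$ to a map $\psi\colon\Omega_k\to\Omega_k$ with $\rho\circ\phi=\psi\circ\rho$: the fibres of $\rho$ are exactly the orbits of $\langle\Gamma\rangle$, so commuting with $\Gamma$ is precisely the condition that $\phi$ permute these fibres and hence pass to the quotient. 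Commutativity of the square then gives $\sigma_k\circ\psi\circ\rho=\sigma_k\circ\rho\circ\phi=\rho\circ\mathfrak{s}_k\circ\phi=\rho\circ\mathfrak{s}_k=\sigma_k\circ\rho$, and surjectivity of $\rho$ forces $\sigma_k\circ\psi=\sigma_k$, i.e. $\psi\in\Aut_{\sigma_k}$. This yields a homomorphism $C\to\Aut_{\sigma_k}$, which is injective: any $\phi$ in its kernel satisfies $\rho\circ\phi=\rho$, hence $\phi\in\Aut_\rho=\{1,\Gamma\}$, but $\Gamma$ moves the $\mathcal{S}$-component and so is not of the admissible form fixed throughout this section, whence $\phi=1$.

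The second step is the computation of $C$. Writing $\Gamma$ as $\mathrm{diag}(1,-1)$ on the $\C^2$-factor (and $s\mapsto-s$ on $\mathcal{S}$), and recalling from the two preceding propositions that the elements of $\Aut_{\mathfrak{s}_k}$ act on $\C^2$ as matrices $\zeta A_\nu$ with $|\zeta|=|\nu|=1$, a one-line matrix computation gives $\Gamma(\zeta A_\nu)\Gamma^{-1}=\zeta A_{\nu^{-1}}$. Thus $\zeta A_\nu\in C$ if and only if $A_\nu=A_{\nu^{-1}}$, i.e. $\nu^2=1$, so $\nu=\pm1$. For $k$ odd only $\nu=1$ survives and $C=\{\xi\,\mathrm{Id}\ :\ \xi\in U_k\}$; for $k$ even the value $\nu=-1$ gives $A_{-1}=-\mathrm{Id}=(-1)\mathrm{Id}$ with $-1\in U_k$, so these elements are reabsorbed and again $C=\{\xi\,\mathrm{Id}\ :\ \xi\in U_k\}$. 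In both parities $C\cong\Z_k$.

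The third, and hardest, step is surjectivity of $C\to\Aut_{\sigma_k}$. The key is that the square exhibits $\rho\colon W_k\to\Omega_k$ as the pullback under $\sigma_k$ of the double cover $\rho\colon W\to\Omega$ (equivalently, $W_k$ is the fibre product $\Omega_k\times_\Omega W$). Since $\psi\in\Aut_{\sigma_k}$ satisfies $\sigma_k\circ\psi=\sigma_k$, it preserves this pulled-back double cover; hence the lifting criterion for covering maps is met and $\psi\circ\rho$ lifts through $\rho$ to some $\phi\colon W_k\to W_k$ with $\rho\circ\phi=\psi\circ\rho$. A direct check of the defining formulas gives $\mathfrak{s}_k\circ\Gamma=\Gamma\circ\mathfrak{s}_k$, from which exactly one of $\phi,\Gamma\circ\phi$ satisfies $\mathfrak{s}_k\circ\phi=\mathfrak{s}_k$; replacing $\phi$ by that one puts $\phi\in\Aut_{\mathfrak{s}_k}$. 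Finally $\Gamma\phi\Gamma^{-1}$ is again a lift of $\psi\circ\rho$ lying in $\Aut_{\mathfrak{s}_k}$, and since (by connectedness of $W_k$) such a lift is unique, $\Gamma\phi\Gamma^{-1}=\phi$; thus $\phi\in C$ maps to $\psi$.

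Combining the three steps gives $\Aut_{\sigma_k}\cong C\cong\Z_k$. Since a covering with connected total space is regular exactly when the order of its automorphism group equals its degree, and here $|\Aut_{\sigma_k}|=k<k^2=\deg\sigma_k$ for every $k\geq2$, the covering $\sigma_k$ is never regular. I expect the genuine obstacle to be the surjectivity step, specifically justifying rigorously that $\rho$ on $\Omega_k$ is $\sigma_k$-pulled back from $\Omega$, so that deck transformations of $\sigma_k$ necessarily preserve it and therefore lift; the descent direction and the determination of $C$ are routine.
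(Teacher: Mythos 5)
Your proof is correct and takes essentially the same route as the paper: both reduce the problem to finding the centralizer of $\Gamma$ inside $\Aut_{\mathfrak{s}_k}$ and then run the identical matrix computation ($QA_\eta=A_\eta Q$ forces $\eta=\pm1$), so that $\Aut_{\sigma_k}$ consists exactly of the scalar maps $z\mapsto\xi z$, $\xi\in U_k$, and non-regularity follows from $k<k^2$. The only real difference is that the paper obtains the lift $G\in\Aut_{\mathfrak{s}_k}$ of a given deck transformation of $\sigma_k$ by appealing to its informal discussion preceding the proposition, whereas you justify this surjectivity step rigorously via the fibre-product description of $\rho\colon W_k\to\Omega_k$ as the pullback of $\rho\colon W\to\Omega$ under $\sigma_k$ — a sharpening of the same argument rather than a different method.
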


\begin{proof}
By the discussion above, given $F\in\Aut_{\sigma_k}$, there is $G\in\Aut_{\mathfrak{s}_k}$ such that 
$$F\circ\rho=\rho\circ G\;.$$
Therefore, $\rho\circ G=\rho\circ G\circ \Gamma$ and, as $G$ is an automorphism, this happens if and only if $G\circ \Gamma=\Gamma\circ G$. 
Given $(u,s)\in\C^2\times\mathcal{S}$, there exist $\xi,\eta\in U_k$ such that $G(u,s)=(\xi A_\eta u, s)$, so we want that
$$(\xi QA_\eta u, -s)=(\xi A_\eta Qu, -s)$$
for all $(u,s)$, where 
$$Q=\begin{pmatrix}1&0\\0&-1\end{pmatrix}\,.$$

In conclusion, we need that $A_\eta Q=QA_\eta$. Writing $\eta=a+\imath b$, we have
$$\begin{pmatrix}a&-b\\-b&-a\end{pmatrix}=\begin{pmatrix}a&b\\b&-a\end{pmatrix}$$
which holds if and only if $b=-b$, i.e. $b=0$.

For $k$ odd, this implies $\eta=1$, for $k$ even, we obtain $\eta=\pm1$. In both cases, we obtain that $G$ is of the form $$G(u,s)=G(\xi u,s)$$
so the corresponding $F\in\Aut_{\sigma_k}$ is of the form $F(z)=\xi z$, with $z\in\Omega_k\subset\C^4$ and $\xi\in U_k$, which obviously form, under composition, the group $\Z_k$.

Given that $\sigma_k$ is a covering map of degree $k^2$, this implies that it is not a regular covering.
\end{proof}

\begin{ex}
Comparing with Example~\ref{exdeg3}, we notice that for any $\ell=0,1,2$, the orbit $g_\ell$, its orbit in $\Aut_{\sigma_k}$ is given by $\{g_\ell,g_{\ell,1},g_{\ell,2}\}$.
\end{ex}

We have already exploited the existence of the automorphisms $z\mapsto \xi z$ in the discussion after Theorem \ref{teo_kroots}, to obtain, in the odd case, all the roots from the intrinsic ones. However, if our slice function corresponds to a stem function defined on a symmetric simply connected open domain (or on a union of two symmetric simply connected open domain) $\mathcal{U}\subseteq\C$, we completely describe all the $k^2$ roots.  
We recall here that, as already stated, the hypothesis of simply connectedness is not removable due to standard covering theory.
In the last part of this paper we provide algebraic techniques to compute all the $k$-th $\star$-roots of a given slice functions, starting from one of them.

\subsection{Permutations of $k$-th roots}
Suppose now that we are given a slice function $f:U\to\H$ and its stem function $F:\mathcal{U}\to\C^4\simeq\C\otimes\H$, with $\mathcal{U}$ simply connected and $F(\mathcal{U})\subseteq\Omega$.

By Proposition \ref{prplifts}, the set
$$\mathcal{G}=\{G:\mathcal{U}\to\Omega_k\ :\ \sigma_k\circ G=F\}$$
contains $k^2$ elements. By the properties of covering automorphisms we have that $\Aut_{\sigma_k}$ acts on $\mathcal{G}$ by  (post-)composition, partitioning it in $k$ orbits with $k$ elements each. Explicitly, given $\xi\in U_k$, the map $G\mapsto\xi G$ is a permutation of $\mathcal{G}$ without fixed points (unless $\xi=1$).

\begin{propos}\label{prp_noaction}Let $G\in \mathcal{G}$ and fix $H:\mathcal{U}\to W_k$ such that $\rho\circ H=G$. Then for each $\tau \in\Aut_{\mathfrak{s}_k}$ the function
$$G_\tau=\rho\circ\tau\circ H$$
belongs to $\mathcal{G}$.\end{propos}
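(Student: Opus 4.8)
The plan is to establish the claim by a short diagram chase, using the two commutation relations encoded in the square relating $\mathfrak{s}_k$ and $\sigma_k$ together with the defining property of $\Aut_{\mathfrak{s}_k}$. Recall that the commutative diagram gives $\sigma_k\circ\rho=\rho\circ\mathfrak{s}_k$ as maps $W_k\to\Omega$, and that $\tau\in\Aut_{\mathfrak{s}_k}$ means exactly that $\tau\colon W_k\to W_k$ is an automorphism with $\mathfrak{s}_k\circ\tau=\mathfrak{s}_k$.

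First I would check that $G_\tau$ is a well-defined map $\mathcal{U}\to\Omega_k$, so that membership in $\mathcal{G}$ even makes sense. Since $H\colon\mathcal{U}\to W_k$ and $\tau$ is an automorphism of the total space $W_k$, the composition $\tau\circ H$ again takes values in $W_k$; applying $\rho$, which restricts to the $2$-to-$1$ covering $W_k\to\Omega_k$, shows that $G_\tau=\rho\circ\tau\circ H$ lands in $\Omega_k$. (The existence of the chosen lift $H$ is itself guaranteed by the lifting criterion, as $\mathcal{U}$ is simply connected and $\rho$ is a covering.) The core of the argument is then the identity $\sigma_k\circ G_\tau=F$, which I would verify by computing
\begin{align*}
\sigma_k\circ G_\tau&=\sigma_k\circ\rho\circ\tau\circ H\\
&=\rho\circ\mathfrak{s}_k\circ\tau\circ H\\
&=\rho\circ\mathfrak{s}_k\circ H\\
&=\sigma_k\circ\rho\circ H\\
&=\sigma_k\circ G=F,
\end{align*}
where we used twice the relation $\sigma_k\circ\rho=\rho\circ\mathfrak{s}_k$, once the identity $\mathfrak{s}_k\circ\tau=\mathfrak{s}_k$, and finally $\rho\circ H=G$ together with $\sigma_k\circ G=F$ (i.e.\ $G\in\mathcal{G}$). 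Hence $G_\tau\in\mathcal{G}$.

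There is no genuine obstacle here; the proof is purely formal once the diagram is in place. The only points that require care are bookkeeping ones: tracking the domain and codomain of each arrow so that the two commutation relations are invoked on the correct spaces, and observing that $\tau$ is an automorphism of the \emph{entire} total space $W_k$ rather than a mere fiberwise symmetry over a single point, which is precisely what makes $\tau\circ H$ globally defined on all of $\mathcal{U}$. Conceptually, the proposition records that the full group $\Aut_{\mathfrak{s}_k}$, transported through the $2$-to-$1$ cover $\rho$, sends a global $k$-th $\star$-root to another one; the computation above is exactly the check that this transport lands inside $\mathcal{G}$.
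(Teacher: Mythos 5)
Your proof is correct and takes essentially the same route as the paper: the identical diagram chase $\sigma_k\circ G_\tau=\sigma_k\circ\rho\circ\tau\circ H=\rho\circ\mathfrak{s}_k\circ\tau\circ H=\rho\circ\mathfrak{s}_k\circ H=\sigma_k\circ\rho\circ H=\sigma_k\circ G=F$, using $\sigma_k\circ\rho=\rho\circ\mathfrak{s}_k$ and $\mathfrak{s}_k\circ\tau=\mathfrak{s}_k$. The extra bookkeeping about $G_\tau$ landing in $\Omega_k$ is a harmless addition that the paper leaves implicit.
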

\begin{proof}
We have that $\sigma_k\circ  G_\tau=\sigma_k\circ\rho\circ\tau\circ H$. We know that $\sigma_k\circ\rho=\rho\circ \mathfrak{s}_k$ and $\mathfrak{s}_k\circ\tau=\mathfrak{s}_k$, so
$$\sigma_k\circ G_\tau=\rho\circ \mathfrak{s}_k\circ \tau\circ H=\rho\circ \mathfrak{s}_k\circ H=\sigma_k\circ\rho\circ H=\sigma_k\circ G\;.$$
Therefore, $G_\tau\in\mathcal{G}$.
\end{proof}

Thanks to Proposition~\ref{prp_noaction}, we recover all the $k^2$ functions in $\mathcal{G}$ from one of them.
We note that the map $\tau\mapsto G_\tau$ is not an action of $\Aut_{\mathfrak{s}_k}$ on $\mathcal{G}$, as the different possible choices of $H$ such that $\rho\circ H=G$ result in different definitions for $G_\tau$ and there is not a coherent way to choose such an $H$ for all the $G\in\mathcal{G}$.
In order to obtain a result in this direction, we now reinterpret some computation given in the previous section with
the language of cover automorphisms (see Remark~\ref{remarkslice}).

\begin{defin}Given $G:\mathcal{U}\to\C^4$, we define $TG:\mathcal{U}\to\C^4$ as $TG(z)=\overline{G(\bar{z})}$.\end{defin}

The previous definition reinterpret in the language of curves in $\C^4$
what previously was defined in the
context of $\C\otimes\H$ (see Remark~\ref{remarkslice}).

\begin{lemma}\label{lmm_TG}We have that
\begin{enumerate}
    \item $TTG=G$;
    \item if $\xi\in U_k$, $T(\xi G)=\bar{\xi}TG$;
    \item if $G\in\mathcal{G}$, then $TG\in\mathcal{G}$;
    \item the subgroup of $\mathrm{Perm}\mathcal{G}$ generated by $\Aut_{\sigma_k}$ and $T$ is isomorphic to the dihedral group.
\end{enumerate}
\end{lemma}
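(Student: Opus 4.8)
The plan is to verify the four claims of Lemma~\ref{lmm_TG} in order, treating the operator $T$ as an involution on curves and tracking how it interacts with the covering structure. For claim $(1)$, I would simply compute $TTG(z)=\overline{TG(\bar z)}=\overline{\overline{G(\bar{\bar z})}}=G(z)$, using that complex conjugation is an involution and that $\bar{\bar z}=z$; this is immediate. For claim $(2)$, since $\xi$ is a constant, $T(\xi G)(z)=\overline{\xi G(\bar z)}=\bar\xi\,\overline{G(\bar z)}=\bar\xi\,TG(z)$, so $T(\xi G)=\bar\xi TG$; the only thing worth noting is that $\xi\in U_k$ implies $\bar\xi=\xi^{-1}\in U_k$, so the conjugate root of unity is again in $U_k$ and $\bar\xi\,TG$ is of the expected form.

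For claim $(3)$, the key computation is exactly the one recorded in Remark~\ref{remarkslice}: if $G\in\mathcal{G}$, so that $\sigma_k\circ G=F$, then using $\sigma_k(\bar w)=\overline{\sigma_k(w)}$ together with the stem condition $F(\bar z)=\overline{F(z)}$ I would write
$$\sigma_k(TG(z))=\sigma_k(\overline{G(\bar z)})=\overline{\sigma_k(G(\bar z))}=\overline{F(\bar z)}=F(z)\;,$$
so $\sigma_k\circ TG=F$, i.e. $TG\in\mathcal{G}$. This reuses precisely the argument already deployed in Remark~\ref{remarkslice} and in the proof of Theorem~\ref{teo_kroots}, so it amounts to citing that computation.

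The substantive part is claim $(4)$. The plan is to consider the subgroup of $\mathrm{Perm}\,\mathcal{G}$ generated by $\Aut_{\sigma_k}$ — which by the previous proposition is the cyclic group $\Z_k$ acting by $G\mapsto\xi G$ for $\xi\in U_k$ — together with the involution $T$. From claims $(1)$ and $(3)$, $T$ is an order-two permutation of $\mathcal{G}$ (it is not the identity, since e.g. on the subcollection where the component $G$ is genuinely complex it reverses orientation; more carefully, if $T$ fixed every element then every lift would satisfy the stem condition, contradicting that only $k$ of the $k^2$ lifts descend to stem functions in the real-point case). Claim $(2)$ gives the crucial conjugation relation: writing $R_\xi$ for the permutation $G\mapsto\xi G$, we have $T R_\xi T^{-1}=R_{\bar\xi}=R_{\xi^{-1}}=R_\xi^{-1}$, since $T R_\xi(G)=T(\xi G)=\bar\xi\,TG=R_{\bar\xi}(TG)$ and $T^{-1}=T$. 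Thus the generators satisfy the dihedral relations $R_\xi^k=1$, $T^2=1$, $T R_\xi T=R_\xi^{-1}$, so the generated group is a quotient of the dihedral group $D_k$ of order $2k$; since $\Z_k=\langle R_\xi\rangle$ already has $k$ elements and $T\notin\langle R_\xi\rangle$ (as $T$ does not act by multiplication by a root of unity), the order is exactly $2k$ and the group is isomorphic to $D_k$.

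The main obstacle is the final cardinality step, namely rigorously establishing that $T\notin\Aut_{\sigma_k}$, i.e. that $T$ is not equal to any $R_\xi$. I expect to argue this by evaluating at a suitable point: if $T=R_\xi$ then $\overline{G(\bar z)}=\xi G(z)$ for all $z$ and all $G\in\mathcal{G}$, which upon iterating forces $\xi\bar\xi=1$ (automatic) but, more tellingly, comparing against a lift whose value at a real or well-chosen base point is not related to its conjugate by a single global root of unity yields a contradiction; one can also simply note that $R_\xi$ commutes with complex scaling by construction while $T$ is conjugate-linear, so they cannot coincide unless $\mathcal{G}$ were trivial. Once $T\notin\langle R_\xi\rangle$ is secured, the dihedral identification follows formally from the presentation $\langle R,T\mid R^k,\,T^2,\,TRT R\rangle$ obtained above.
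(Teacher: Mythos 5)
Your proposal is correct and follows essentially the same route as the paper's proof: points (1) and (2) are the same direct computations, point (3) is exactly the paper's argument combining $\sigma_k(\overline{w})=\overline{\sigma_k(w)}$ (justified there by $\sigma_k$ having polynomial components with real coefficients) with the stem condition on $F$, and point (4) rests on the same dihedral relations $\xi^k=T^2=1$, $T\xi T^{-1}=\xi^{-1}$. Where you go beyond the paper is in verifying that the generated group has order exactly $2k$, i.e.\ that $T\notin\Aut_{\sigma_k}$ --- a point the paper's proof leaves implicit --- and your sketch there is essentially right: the conjugation identity $TR_\eta T^{-1}=R_{\bar\eta}\neq R_\eta$ for non-real $\eta\in U_k$ shows $T$ cannot lie in the abelian group $\Aut_{\sigma_k}$ whenever $k\geq 3$, while for $k=2$ (where all elements of $U_2$ are real and this argument degenerates) one can instead use that $T$ fixes a stem lift, whereas the paper has already observed that every nontrivial $R_\xi$ acts on $\mathcal{G}$ without fixed points.
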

\begin{proof}
The first and second statements are obvious. 

Given $G\in\mathcal{G}$, $\sigma_k(G(z))=F(z)$ so $T(\sigma_k\circ G)=TF=F$ (because $F$ is the stem function of a slice function). Moreover, $\sigma_k$ has polynomial components with real coefficients, so
$$\sigma_k TG(z)=\sigma_k(\overline{G(\bar{z})})=\overline{\sigma_k(G(\bar{z}))}=T\sigma_k\circ G(z)=F(z)\;,$$
which implies $TG\in\mathcal{G}$.

$\Aut_{\sigma_k}\cong U_k\cong \Z_k$ acts on $\mathcal{G}$ by scalar multiplication. Given $\xi$ a primitive $k$-th rooth of unity, we have that the group generated by $\xi$ and $T$ has the following presentation
$$\langle \xi,\ T\ \vert\ \xi^k, T^2, \xi T\xi=T\rangle$$
(once we notice that $\bar{\xi}=1/\xi$),
which is the standard presentation of the dihedral group.
\end{proof}

The last statement of Lemma \ref{lmm_TG}  defines an action of the dihedral group on $\mathcal{G}$.

In the next theorem we show, in the case $k$ odd, how to recover all the stem functions in $\mathcal{G}$ starting from one of them.

\begin{teorema}\label{teo_genrootsodd}If $k$ is odd and $F$ is a fixed point for $T$ (i.e. if it is a stem function), then there exist $k$ functions $G:\mathcal{U}\to\C^4$ such that
$$\sigma_k\circ G=F\qquad TG=G\;.$$
\end{teorema}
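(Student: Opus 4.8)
The plan is to count the elements of $\mathcal{G}$ fixed by the involution $T$, exploiting that a lift through the covering $\sigma_k$ is determined by its value at a single point of the connected domain $\mathcal{U}$. First I would observe that, since $F$ is a stem function, $\mathcal{U}$ is invariant under complex conjugation; being a simply connected domain it is in particular connected, and a connected conjugation-invariant open set must meet $\R$. Hence I may fix a base point $x^0\in\mathcal{U}\cap\R$. Because $\sigma_k\colon\Omega_k\to\Omega$ is a covering map (Theorem~\ref{teo_powercover}) and $\mathcal{U}$ is simply connected, evaluation at $x^0$ is a bijection between $\mathcal{G}$ and the fiber $\sigma_k^{-1}(F(x^0))\cap\Omega_k$, which has $k^2$ points by Proposition~\ref{prplifts}.

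The key reduction is that, as $x^0$ is real and $F$ is a stem function, one has $F(x^0)\in\R^4$, and for $G\in\mathcal{G}$, $TG(x^0)=\overline{G(\overline{x^0})}=\overline{G(x^0)}$. Since $TG\in\mathcal{G}$ by Lemma~\ref{lmm_TG}(3), and two lifts agreeing at $x^0$ coincide, we get $TG=G$ if and only if $G(x^0)=\overline{G(x^0)}$, i.e. if and only if $G(x^0)\in\R^4$. Thus the sought functions are in bijection with the real points of the fiber $\sigma_k^{-1}(F(x^0))$ lying in $\Omega_k$.

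Next I would count these real points using the quaternionic $k$-th power exactly as in the proof of Theorem~\ref{teo_kroots}. Since $\sigma_k(\R^4)\subseteq\R^4$ and $h\circ(\sigma_k|_{\R^4})=h^k$ by \eqref{eqovvia}, a real preimage $w\in\R^4\cap\sigma_k^{-1}(F(x^0))$ corresponds to a quaternionic $k$-th root $h(w)$ of $q_0:=h(F(x^0))$, and $w\in\Omega_k$ forces $h(w)\in h(\R^4\cap\Omega_k)=\H\setminus S(g)$ for $g(q)=q^k$. The hypothesis $F(\mathcal{U})\subseteq\Omega$ gives $q_0\in h(\R^4\cap\Omega)=\H\setminus C(g)$, so $q_0$ is a regular value of $g$ and therefore has precisely $k$ preimages $q_1,\dots,q_k$ in $\H\setminus S(g)$. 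Consequently the fiber contains exactly the $k$ admissible real points $h^{-1}(q_1),\dots,h^{-1}(q_k)$, yielding exactly $k$ lifts $G$ with $\sigma_k\circ G=F$ and $TG=G$.

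Finally, to connect with the covering-automorphism language of this section and to locate where oddness of $k$ is used, I would note that each such real point $w$ satisfies $Tw=w$, so its $\Aut_{\sigma_k}$-orbit $\{\xi^a w\colon\xi\in U_k\}$ is $T$-stable; for $k$ odd, $\xi^a\in\R$ forces $a=0$, so the $k$ real points lie in $k$ distinct orbits, hence in all of them, and on each $T$-stable orbit the fixed-point condition $TG=\xi^c G$ gives $\xi^{c-2a}=1$, which by oddness has the unique solution $a\equiv c\cdot 2^{-1}\pmod k$ (one fixed lift per orbit). The main obstacle is the bookkeeping guaranteeing that the real preimages lie in $\Omega_k$ rather than merely in $\Omega$, so that they extend to globally defined lifts on all of $\mathcal{U}$; this is precisely what the regular-value hypothesis $F(\mathcal{U})\subseteq\Omega$ provides through the identification $\H\setminus S(g)=h(\R^4\cap\Omega_k)$.
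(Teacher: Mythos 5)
Your argument has a genuine gap: it establishes the theorem only when $\cU\cap\R\neq\emptyset$, which is exactly the hypothesis this theorem is designed to remove. Your opening reduction --- ``a connected conjugation-invariant open set must meet $\R$, hence I may fix $x^0\in\cU\cap\R$'' --- is true for connected sets, but in this paper the conjugation-invariant set $\cU$ is allowed to be disjoint from $\R$, in which case it is the disjoint union of two conjugate simply connected halves $\cU^+$ and $\cU^-$. This is precisely the setting of Theorem \ref{casosenzareali} and of the examples built from $\mathcal{J}$ (and is how ``simply connected'' has to be read here, since otherwise Theorem \ref{casosenzareali} would be vacuous); the paper even states immediately after the proof of Theorem \ref{teo_genrootsodd} that no real points of $\cU$ are assumed. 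In the case $\cU\cap\R=\emptyset$ your proof has no starting point: there is no real base point, evaluation at a single point no longer determines an element of $\mathcal{G}$ (lifts can be chosen independently on the two components), and the equivalence ``$TG=G$ iff $G(x^0)\in\R^4$'' is unavailable. What you have written is, in essence, a re-proof of Theorem \ref{teo_kroots} --- whose proof in the paper uses the same base-point and quaternionic-power counting --- rather than a proof of Theorem \ref{teo_genrootsodd}.

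The paper's own proof avoids real points entirely, and this is where the oddness of $k$ enters in an essential and different way from your orbit analysis: $T$ is an involution of the finite set $\mathcal{G}$ (Lemma \ref{lmm_TG}), whose cardinality is odd when $k$ is odd, so $T$ must have at least one fixed point $G$; then, writing $G=\rho\circ H$, the $k$ functions $G_\tau=\rho\circ\tau\circ H$, with $\tau\in\Aut_{\mathfrak{s}_k}$ induced by the matrices $A_\eta$, $\eta\in U_k$ (Proposition \ref{prp_noaction}), are all fixed by $T$ because each $A_\eta$ has real entries. Note that an involution of a set of even cardinality can be fixed-point free, so the parity step is where oddness is truly indispensable. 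If you add the hypothesis $\cU\cap\R\neq\emptyset$, your argument is correct and even yields the sharper statement that there are exactly $k$ fixed lifts; but to prove the theorem in the intended generality you need an argument, like the parity one, that survives the disconnected case.
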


\begin{ex}
In  Example~\ref{exdeg3} the functions fixed by $T$ are the stem functions of $g_0,g_1$ and $g_2$.
\end{ex}

\begin{proof}
If $k$ is odd, there is always at least one element of $\mathcal{G}$ such that $TG=G$, because $T$ is an involution. If we consider the elements $\tau$ of $\Aut_{\mathfrak{s}_k}$ induced by the matrices $A_\eta$, with $\eta\in U_k$, then the corresponding functions $G_\tau$ obtained via Proposition \ref{prp_noaction} are all fixed points of $T$: $G_\tau=\rho\circ\tau\circ H$ and $H(z)=(u(z), s(z))$, so $\tau\circ H(z)=(A_\eta u(z), s(z))$, which means that
$$TG_{\tau}=\overline{\rho(A_\eta u(\bar{z}), s(\bar{z}))}= \rho(A_\eta \overline{u(\bar z)},\overline{s(\bar{z})})$$
because $A_\eta$ is a real matrix, and, as $TG=G$, $\overline{u(\bar{z})}=u(z)$ and $\overline{s(\bar{z})}=s(z)$.

Therefore, $k$ elements of $\mathcal{G}$ are fixed points for $T$.
\end{proof}

\begin{rem}
Notice that the previous theorem is not a rephrased version of Theorem~\ref{teo_kroots}. In fact,
in this last result, we are not assuming that the domain intersects the real line and yet we obtain that, at the level of stem functions, $k$ solutions are \textit{Schwarz-symmetric}.
\end{rem}

As in the previous section, the result for $k$ even needs some additional efforts: let $F':\mathcal{U}\to \C^2\times\mathcal{S}$ be such that $\rho\circ F'=F$; define
$$\mathcal{G}'=\{G':\mathcal{U}\to\C^2\times\mathcal{S}\ :\ \mathfrak{s}_k\circ G'=F'\}$$
and define $TG'(z)=\overline{G'(\bar{z})}$, where, if $z=((u_0,u_1),s), then$ $\overline{z}=\overline{((u_0,u_1),s)}=((\bar{u}_0, \bar{u}_1),\bar{s})$.

If $F$ is a stem function, then $TF'=F'$.

\begin{lemma}\label{lmm_TGp}We have that
\begin{enumerate}
    \item $TTG'=G'$
    \item if $G'\in\mathcal{G}'$, then $TG'\in\mathcal{G}'$
    \item for each $G'\in\mathcal{G'}$, there exists $\tau_{G'}\in\Aut_{\mathfrak{s}_k}$ such that $TG'=\tau_{G'}\circ G'$
    \item $\tau_{G'}$ is always of the form $(u,s)\mapsto (\xi u, s)$.
\end{enumerate}
\end{lemma}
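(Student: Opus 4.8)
The plan is to follow the same scheme used for Lemma~\ref{lmm_TG}, replacing $\sigma_k$ by $\mathfrak{s}_k$ and $\C^4$ by $\C^2\times\mathcal{S}$, and then to extract the extra information in item $(4)$ from the explicit description of $\Aut_{\mathfrak{s}_k}$. Item $(1)$ is immediate: since complex conjugation is an involution, $TTG'(z)=\overline{(TG')(\bar z)}=\overline{\overline{G'(z)}}=G'(z)$. For item $(2)$, the key observation is that $\mathfrak{s}_k((u_0,u_1),s)=((p_0^k(u_0,u_1^2),u_1p_1^{k-1}(u_0,u_1^2)),s)$ has components that are polynomials with real coefficients in $(u_0,u_1)$ and is the identity on $\mathcal{S}$; hence $\mathfrak{s}_k$ commutes with conjugation, i.e. $\mathfrak{s}_k(\overline{w})=\overline{\mathfrak{s}_k(w)}$. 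Using that $F$ is a stem function (so $TF'=F'$), I would then compute
$$\mathfrak{s}_k(TG'(z))=\mathfrak{s}_k\big(\overline{G'(\bar z)}\big)=\overline{\mathfrak{s}_k(G'(\bar z))}=\overline{F'(\bar z)}=TF'(z)=F'(z),$$
so that $TG'\in\mathcal{G}'$.

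For item $(3)$ I would invoke covering space theory. By item $(2)$, both $G'$ and $TG'$ are lifts of the same map $F'$ through the covering $\mathfrak{s}_k$. Since $\mathcal{U}$ is simply connected and $\mathfrak{s}_k$ is a normal covering (its automorphism group acts transitively on each fiber), I would fix a base point $z_0$ and take the unique $\tau_{G'}\in\Aut_{\mathfrak{s}_k}$ with $\tau_{G'}(G'(z_0))=TG'(z_0)$; then $\tau_{G'}\circ G'$ and $TG'$ are lifts of $F'$ agreeing at $z_0$, hence equal by uniqueness of lifts. This gives $TG'=\tau_{G'}\circ G'$.

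The bulk of the work is item $(4)$, for which the crucial point is how $T$ intertwines with deck transformations. Writing a generic $\tau\in\Aut_{\mathfrak{s}_k}$ as $\tau(u,s)=(Mu,s)$ with $M$ a complex $2\times2$ matrix, a direct check (again using that $\mathfrak{s}_k$ commutes with conjugation) shows that $\bar\tau(w):=\overline{\tau(\bar w)}$ is again in $\Aut_{\mathfrak{s}_k}$, equals $\bar\tau(u,s)=(\bar M u,s)$, and satisfies $T(\tau\circ G')=\bar\tau\circ TG'$. Applying $T$ to the identity $TG'=\tau_{G'}\circ G'$ from item $(3)$ and using item $(1)$ yields
$$G'=TTG'=\bar\tau_{G'}\circ\tau_{G'}\circ G',$$
so the deck transformation $\bar\tau_{G'}\circ\tau_{G'}$ fixes a point of the connected total space $W_k$ and is therefore the identity (deck transformations act freely), i.e. $\bar M M=\mathrm{Id}$. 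I would then insert the explicit form $M=\xi A_\eta S^\delta$, $S=\lambda A_\mu$, with $\xi,\eta\in U_k$, $\lambda,\mu\in U_{2k}$ primitive and $\delta\in\{0,1\}$. Since the entries of $A_\eta$ are real and $|\xi|=|\lambda|=1$, the condition $\bar MM=\mathrm{Id}$ reduces to $A_{\eta^2}=\mathrm{Id}$ when $\delta=0$ and to $A_{(\eta\mu)^2}=\mathrm{Id}$ when $\delta=1$. The first gives $\eta^2=1$, hence $\eta=\pm1$ and $M=\pm\xi\,\mathrm{Id}$ is scalar; the second forces $(\eta\mu)^2=1$, which is impossible for $k$ even: writing $\mu=\exp(2\pi\imath j/(2k))$ with $\gcd(j,2k)=1$ (so $j$ odd) and $\eta=\exp(2\pi\imath(2l)/(2k))$, one gets $\eta\mu=\exp(2\pi\imath(2l+j)/(2k))$ with $2l+j$ odd, while $(\eta\mu)^2=1$ would require $k\mid(2l+j)$, contradicting the evenness of $k$. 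This rules out $\delta=1$ and shows $\tau_{G'}$ has the form $(u,s)\mapsto(\xi u,s)$.

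The step I expect to be delicate is exactly this parity argument eliminating the $S^\delta$ factor: one must check carefully that for even $k$ no $\eta\in U_k$ makes $\eta\mu$ a square root of unity, which is precisely where the distinction between the $k$-th roots $U_k$ and the primitive $2k$-th roots genuinely enters the even case.
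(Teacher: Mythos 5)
Your proof is correct, and for items (1)--(3) it coincides with the paper's argument (which compresses them into ``analogous to Lemma~\ref{lmm_TG}'' and ``uniqueness of lifts''; your explicit use of normality of $\mathfrak{s}_k$ to produce $\tau_{G'}$ at a base point is exactly what that one-liner hides). For item (4), however, your route is more careful than the paper's and in fact completes it. The paper's displayed computation applies the identity $T(\tau\circ G')=\tau\circ TG'$ to $\tau=\tau_{G'}$, but that identity holds only when the matrix of $\tau$ is real, i.e.\ $\tau$ of the form $(u,s)\mapsto(A_\eta u,s)$; a general element of $\Aut_{\mathfrak{s}_k}$ has matrix $M=\xi A_\eta S^\delta$, which is not real, so what the computation actually yields is not ``$\tau\circ\tau=\mathrm{id}$''. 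Your device of the conjugated automorphism $\bar\tau$ (with matrix $\bar M$), giving $T(\tau\circ G')=\bar\tau\circ TG'$ and hence $\bar\tau_{G'}\circ\tau_{G'}=\mathrm{id}$, i.e.\ $\bar M M=\mathrm{Id}$, is the correct general form of the paper's idea; and your case analysis --- $A_{\eta^2}=\mathrm{Id}$ for $\delta=0$, and the parity argument showing $(\eta\mu)^2=1$ is impossible for $k$ even, which eliminates the $S$-factor --- supplies exactly the part that the paper's terse proof leaves implicit. One micro-remark: your appeal to ``deck transformations act freely'' requires $W_k$ connected (a fact the paper also uses silently, e.g.\ when bounding $|\Aut_{\mathfrak{s}_k}|$ by the degree); if you prefer to avoid it, note that $\bar M M=A_{(\eta\mu^\delta)^2}$, that the eigenvalues of $A_w$ are $w$ and $\bar w$, so $A_w$ fixes a nonzero vector only if $w=1$, and that the vectors $u(z)$ fixed by $\bar\tau_{G'}\circ\tau_{G'}$ are nonzero on $W_k$ because $u_1\neq 0$ there.
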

\begin{proof}
The first and the second statements are analogous to the ones in Lemma \ref{lmm_TG}.

The third statement follows from the uniqueness of the lifts through a covering map.

For the fourth, we note that, if $\tau(u,s)=(A_\eta u, s)$, then $T(\tau\circ G')=\tau\circ TG'$. By \textcolor{blue}{$(1)$}, $TTG'=G'$, so, if $TG'=\tau\circ G'$, we have that 
$$TTG'=T(\tau\circ G')=\tau\circ TG'=\tau\circ\tau\circ G'$$
i.e. $\tau\circ\tau$ is the identity, which implies $A_\eta=\pm I$ (so, \textcolor{blue}{$\tau$} is of the form $(u,s)\mapsto (\xi u,s)$).
\end{proof}

\begin{rem}
Given $\eta\in U_k$, let $\tau_\eta\in\Aut_{\mathfrak{s}_k}$ be of the form $\tau_\eta(u,s)=(\eta u,s)$, then
$$T(\tau_\eta\circ G')=\tau_{\bar{\eta}}TG'=\tau_{\bar{\eta}}\circ \tau_{G'}\circ G'=\tau_{\bar{\eta}^2}\circ \tau_{G'}\circ \tau_\eta\circ G'\;.$$
Therefore, if $G''=\tau_\eta\circ G'$, then $$\tau_{G''}=\tau_{\bar{\eta}^2}\circ\tau_{G'}\;.$$
On the other hand, if $\tau(u,z)=(A_\eta u, s)$ and $G''=\tau\circ G'$, then $\tau_{G''}=\tau_{G'}$, as shown in the end of the proof of Lemma \ref{lmm_TGp}.
\end{rem}

Therefore, given $G'\in\mathcal{G}'$, $\tau_{G'}(u,s)=(\xi u,s)$ with $\xi\in U_k$, we can always find $\tau$ in $\Aut_{\mathfrak{s}_k}$ such that $\tau\circ\tau=\tau_{\bar{\xi}}$, so that, setting $G''=\tau\circ G'$, we have
$$\tau_{G''}=1\;.$$

So, we have found an element of $\mathcal{G}'$ such that $TG''=G''$; we also know that, for all $\eta\in U_k$, given
$\tau\in \Aut_{\mathfrak{s}_k}$, $\tau\circ G''$ is again a fixed point for $T$.

By composing with $\rho$ and considering also the previous Theorem, we have proved the following
\begin{teorema}\label{teo_genrootseven}If $F$ is a fixed point for $T$ (i.e. if it is a stem function), then there exist $k$ functions $G:\mathcal{U}\to\C^4$ such that
$$\sigma_k\circ G=F\qquad TG=G\;.$$
\end{teorema}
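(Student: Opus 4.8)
The plan is to reduce the whole statement to the production of a \emph{single} $T$-fixed lift in the intermediate space $\C^2\times\mathcal{S}$, and then to spread it into $k$ distinct ones by acting with the ``rotational'' automorphisms $A_\eta$. Since the case $k$ odd is already Theorem~\ref{teo_genrootsodd}, the genuine content is the even case, and I would treat it uniformly by working in $\mathcal{G}'$. First I would fix a lift $F':\mathcal{U}\to\C^2\times\mathcal{S}$ of $F$ through $\rho$; because $F$ is a stem function, $TF'=F'$. As $\mathcal{U}$ is simply connected and $\mathfrak{s}_k$ is a covering map of degree $k^2$, the set $\mathcal{G}'$ of lifts of $F'$ through $\mathfrak{s}_k$ has exactly $k^2$ elements; and since $\rho$ has real coefficients (so it commutes with complex conjugation and hence with $T$) it carries $T$-fixed points of $\mathcal{G}'$ bijectively onto $T$-fixed points of $\mathcal{G}$. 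Thus it suffices to exhibit $k$ distinct $T$-fixed elements of $\mathcal{G}'$.

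Second, I would analyse the \emph{defect} of an arbitrary $G'\in\mathcal{G}'$. By Lemma~\ref{lmm_TGp} there is $\tau_{G'}\in\Aut_{\mathfrak{s}_k}$ with $TG'=\tau_{G'}\circ G'$, and $\tau_{G'}$ is of scalar type $\tau_\xi$, $\xi\in U_k$. The mechanism for changing this defect is the substitution $G'\mapsto\sigma\circ G'$, $\sigma\in\Aut_{\mathfrak{s}_k}$: using $T(\sigma\circ G')=\bar\sigma\circ TG'$, where $\bar\sigma$ denotes entrywise complex conjugation of the matrix of $\sigma$, and the centrality of the scalar $\tau_\xi$, I obtain the transformation law
$$\tau_{\sigma\circ G'}=(\bar\sigma\,\sigma^{-1})\,\tau_{G'}\;.$$
Hence the achievable defects form the coset $\{\bar\sigma\sigma^{-1}:\sigma\in\Aut_{\mathfrak{s}_k}\}\cdot\tau_\xi$, and the goal is to hit the identity, i.e. to realise $\tau_{\bar\xi}$ as some $\bar\sigma\sigma^{-1}$.

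Third comes the crux, which is exactly where $k$ even parts company with $k$ odd. Restricting $\sigma$ to scalars $\tau_\eta$ produces shifts $\bar\eta^2$, so one can trivialise the defect whenever $\xi$ is a \emph{square} in $U_k$; for $k$ even the squares form an index-two subgroup, and this is not always possible, which is precisely the obstruction visible in Example~\ref{exsubdolo}. The remedy is the extra generator $S=\lambda A_\mu$ of $\Aut_{\mathfrak{s}_k}$ supplied by the even-case description of the automorphism group: choosing $\lambda\in U_{2k}$ with $\lambda^2=\xi$, a short computation (using that $A_\mu$ is a real matrix, so $\bar S=\bar\lambda A_\mu$ and $S^{-1}=\bar\lambda A_{\bar\mu}$) gives $\bar S\,S^{-1}=\tau_{\bar\lambda^2}=\tau_{\bar\xi}$, whence $G''=S\circ G'$ satisfies $\tau_{G''}=1$, that is $TG''=G''$. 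I expect this step --- checking that $S$ contributes a \emph{primitive} scalar defect-shift and thereby closes the index-two gap left open by the scalars --- to be the main obstacle; it is the algebraic heart of the even case.

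Finally, with one $T$-fixed $G''\in\mathcal{G}'$ in hand, I would act by the real automorphisms $A_\eta$, $\eta\in U_k$. Since $A_\eta$ is a real matrix it commutes with $T$, so its defect-shift $\bar\sigma\sigma^{-1}$ is trivial and each $A_\eta\circ G''$ is again $T$-fixed; for distinct $\eta$ these are distinct lifts (as $G''$ is not valued in an eigenline of the $A_\eta$). This produces $k$ distinct $T$-fixed elements of $\mathcal{G}'$; projecting through $\rho$ and invoking the $T$-equivariant bijection $\mathcal{G}'\to\mathcal{G}$ yields $k$ functions $G:\mathcal{U}\to\C^4$ with $\sigma_k\circ G=F$ and $TG=G$. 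Together with Theorem~\ref{teo_genrootsodd}, this settles all parities and proves the statement.
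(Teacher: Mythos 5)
Your proposal is correct and follows essentially the same route as the paper: pass to $\mathcal{G}'$, measure the failure of $T$-invariance by the scalar defect $\tau_{G'}$ of Lemma~\ref{lmm_TGp}, kill it by post-composing with a suitable element of $\Aut_{\mathfrak{s}_k}$ (scalars handle square defects, the coset of $S$ the non-squares), then produce $k$ distinct $T$-fixed lifts by acting with the real rotations $A_\eta$ and push everything down through $\rho$. If anything, your explicit transformation law $\tau_{\sigma\circ G'}=(\bar\sigma\,\sigma^{-1})\,\tau_{G'}$ together with the computation $\bar S\,S^{-1}=\tau_{\bar\lambda^2}$ is a more precise rendering of the step the paper compresses into the claim that one can always find $\tau$ with ``$\tau\circ\tau=\tau_{\bar\xi}$'', so your write-up is the same argument carried out more carefully.
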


\begin{ex}
In  Example~\ref{exsubdolo} the functions fixed by $T$ are the stem functions of $g_3$ and $g_4$.
\end{ex}

\bibliographystyle{plain}
\bibliography{powerslice}{}

@article{AltavillaCVEE,
  doi = {10.1080/17476933.2014.889691},
  url = {https://doi.org/10.1080/17476933.2014.889691},
  year = {2014},
  month = feb,
  publisher = {Informa {UK} Limited},
  volume = {60},
  number = {1},
  pages = {59--77},
  author = {Altavilla, Amedeo},
  title = {Some properties for quaternionic slice regular functions on domains without real points},
  journal = {Complex Variables and Elliptic Equations}
}

@article{Altavilla:RealDiff,
	doi = {10.1515/advgeom-2017-0044},
	url = {https://doi.org/10.1515/advgeom-2017-0044},
	year = 2018,
	month = {jan},
	publisher = {Walter de Gruyter {GmbH}},
	volume = {18},
	number = {1},
	pages = {5--26},
	author = {Altavilla, Amedeo},
	title = {On the real differential of a slice regular function},
	journal = {Advances in Geometry}
}

@article{AltavillaAMPA,
  doi = {10.1007/s10231-018-0724-1},
  url = {https://doi.org/10.1007/s10231-018-0724-1},
  year = {2018},
  month = jan,
  publisher = {Springer Science and Business Media {LLC}},
  volume = {197},
  number = {4},
  pages = {1269--1294},
  author = {Altavilla, Amedeo and de~Fabritiis, Chiara},
  title = {s-Regular functions which preserve a complex slice},
  journal = {Annali di Matematica Pura ed Applicata (1923 -)}
}

@article{AltavillaPAMS,
  doi = {10.1090/proc/14307},
  url = {https://doi.org/10.1090/proc/14307},
  year = {2018},
  month = dec,
  publisher = {American Mathematical Society ({AMS})},
  volume = {147},
  number = {3},
  pages = {1173--1188},
  author = {Altavilla, Amedeo and de~Fabritiis, Chiara},
  title = {$*$-exponential of slice-regular functions},
  journal = {Proceedings of the American Mathematical Society}
}

@article{AltavillaLAA,
  doi = {10.1016/j.laa.2020.08.009},
  url = {https://doi.org/10.1016/j.laa.2020.08.009},
  year = {2020},
  month = dec,
  publisher = {Elsevier {BV}},
  volume = {607},
  pages = {151--189},
  author = {Altavilla, Amedeo and de~Fabritiis, Chiara},
  title = {Equivalence of slice semi-regular functions via Sylvester operators},
  journal = {Linear Algebra and its Applications}
}

@article{altavillaLOG,
      title={$*$-Logarithm for Slice Regular Functions}, 
      author={Altavilla, Amedeo and de~Fabritiis, Chiara},
      year={2021},
      eprint={2106.04227},
      archivePrefix={arXiv},
      primaryClass={math.CV}
}

@article{AltavillaMATHZ,
  doi = {10.1007/s00209-018-2225-8},
  url = {https://doi.org/10.1007/s00209-018-2225-8},
  year = {2018},
  month = dec,
  publisher = {Springer Science and Business Media {LLC}},
  volume = {291},
  number = {3-4},
  pages = {1059--1092},
  author = {Altavilla, Amedeo and Sarfatti, Giulia},
  title = {Slice-polynomial functions and twistor geometry of ruled surfaces in $\mathbb{CP}^3$},
  journal = {Mathematische Zeitschrift}
}

@article{CGCS,
  doi = {10.1080/17476933.2012.674521},
  url = {https://doi.org/10.1080/17476933.2012.674521},
  year = {2013},
  month = oct,
  publisher = {Informa {UK} Limited},
  volume = {58},
  number = {10},
  pages = {1355--1372},
  author = {Colombo, Fabrizio and Gonz{\'{a}}lez-Cervantes, Oscar and Sabadini, Irene},
  title = {The C-property for slice regular functions and applications to the Bergman space},
  journal = {Complex Variables and Elliptic Equations}
}

@article{GPV,
      title={Slice conformality: Riemann manifolds and logarithm on quaternions and octonions}, 
      author={Gentili, Graziano and Prezelj, Jasna and Vlacci, Fabio},
      year={2021},
      eprint={2107.07892},
      archivePrefix={arXiv},
      primaryClass={math.CV}
}

@article{GPV2,
      title={On a definition of logarithm of quaternionic functions}, 
      author={Graziano Gentili and Jasna Prezelj and Fabio Vlacci},
      year={2021},
      eprint={2108.08595},
      archivePrefix={arXiv},
      primaryClass={math.CV}
}

@article{GSS:Twistor,
	doi = {10.4171/jems/488},
	url = {https://doi.org/10.4171/jems/488},
	year = 2014,
	publisher = {European Mathematical Society Publishing House},
	volume = {16},
	number = {11},
	pages = {2323--2353},
	author = {Gentili, Graziano and Salamon, Simon and Stoppato, Caterina},
	title = {Twistor transforms of quaternionic functions and orthogonal complex structures},
	journal = {Journal of the European Mathematical Society}
}

@book{GSS:RegFunc,
	doi = {10.1007/978-3-642-33871-7},
	url = {https://doi.org/10.1007/978-3-642-33871-7},
	year = 2013,
	publisher = {Springer Berlin Heidelberg},
	author = {Gentili, Graziano and Stoppato, Caterina and Struppa, Daniele C.},
	title = {Regular Functions of a Quaternionic Variable}
}

@article{GMP,
  doi = {10.1142/s0129055x13500062},
  url = {https://doi.org/10.1142/s0129055x13500062},
  year = {2013},
  month = may,
  publisher = {World Scientific Pub Co Pte Lt},
  volume = {25},
  number = {04},
  pages = {1350006},
  author = {Ghiloni, Riccardo and Moretti, Valter and Perotti, Alessandro},
  title = {Continuous slice functional calculus in quaternionic Hilbert spaces},
  journal = {Reviews in Mathematical Physics}
}

@article{GP:AltAlg,
	doi = {10.1016/j.aim.2010.08.015},
	url = {https://doi.org/10.1016/j.aim.2010.08.015},
	year = 2011,
	month = {jan},
	publisher = {Elsevier {BV}},
	volume = {226},
	number = {2},
	pages = {1662--1691},
	author = {Ghiloni, Riccardo and Perotti, Alessandro},
	title = {Slice regular functions on real alternative algebras},
	journal = {Advances in Mathematics}
}

@article{Ghiloni2020,
  doi = {10.1007/s12220-020-00356-8},
  url = {https://doi.org/10.1007/s12220-020-00356-8},
  year = {2020},
  month = jan,
  publisher = {Springer Science and Business Media {LLC}},
  volume = {31},
  number = {3},
  pages = {2383--2415},
  author = {Ghiloni, Riccardo and Perotti, Alessandro},
  title = {On a Class of Orientation-Preserving Maps of $\mathbb{R}^4$},
  journal = {The Journal of Geometric Analysis}
}

@book {Mason,
    AUTHOR = {Mason, J. C. and Handscomb, D. C.},
     TITLE = {Chebyshev polynomials},
 PUBLISHER = {Chapman \& Hall/CRC, Boca Raton, FL},
      YEAR = {2003},
     PAGES = {xiv+341},
      ISBN = {0-8493-0355-9},
   MRCLASS = {33-02 (33C45 41-02 41A50 65-02)},
  MRNUMBER = {1937591},
MRREVIEWER = {P. D. F. Ion},
}

@book {Kosniowski,
    AUTHOR = {Kosniowski, Czes},
     TITLE = {A first course in algebraic topology},
 PUBLISHER = {Cambridge University Press, Cambridge-New York},
      YEAR = {1980},
     PAGES = {viii+269},
      ISBN = {0-521-23195-7; 0-521-29864-4},
   MRCLASS = {57-01 (55-01)},
  MRNUMBER = {586943},
MRREVIEWER = {Jo\v{z}e Vrabec},
}

@article{Mongodi:HolSR,
	doi = {10.1007/s11785-020-00996-2},
	url = {https://doi.org/10.1007/s11785-020-00996-2},
	year = 2020,
	month = {mar},
	publisher = {Springer Science and Business Media {LLC}},
	volume = {14},
	number = {3},
	author = {Mongodi, Samuele},
	title = {Holomorphicity of Slice-Regular Functions},
	journal = {Complex Analysis and Operator Theory}
}

@unpublished{Mongodi:AssAlg,
  author       = {Mongodi, Samuele},
  title        = {Complex structures and slice-regular functions on real associative algebras},
  year         = {2019},
  note={https://arxiv.org/abs/1907.00876v2},

}
    
\end{document}